\documentclass[a4, 11pt]{article}

\usepackage[a4paper,left=25mm,right=25mm,top=25mm,bottom=30mm]{geometry}
\usepackage{setspace}
\usepackage{amsmath,amssymb,amsthm}
\usepackage{mathtools}
\usepackage{ascmac}
\usepackage{bm}
\usepackage{graphicx}
\usepackage{hyperref}
\usepackage{color}
\usepackage{algpseudocode}
\usepackage{algorithm}
\usepackage{multicol,multirow}
\usepackage{boites}
\usepackage{framed}
\usepackage{setspace}
\usepackage{float}
\usepackage{xcolor}
\hypersetup{
	colorlinks=false,
	citebordercolor=green,
	linkbordercolor=red,
	urlbordercolor=cyan,
}

\usepackage[style=numeric,doi=false,eprint=true,url=false,isbn=false,maxbibnames=10]{biblatex}
\AtEveryBibitem{\ifentrytype{book}{\clearfield{pages}}{}}
\DeclareFieldFormat{pages}{#1}
\renewbibmacro{in:}{}
\theoremstyle{plain}
\newtheorem{theorem}{Theorem}
\newtheorem{prop}[theorem]{Proposition}
\newtheorem{lemma}[theorem]{Lemma}
\newtheorem{cor}[theorem]{Corollary}
\theoremstyle{definition}

\theoremstyle{remark}
\newtheorem{rem}[theorem]{Remark}
\newtheorem{example}[theorem]{Example}

\numberwithin{theorem}{section}
\numberwithin{equation}{section}

\newcommand\Z{\mathbb{Z}}

\newcommand\R{\mathbb{R}}

\newcommand\T{\mathbb{T}}

\newcommand\eps{\varepsilon}

\newcommand\qqquad{\qquad \qquad}
\newcommand\del{\partial}
\renewcommand\bar[1]{\overline{#1}}
\newcommand\sgn[1]{\mathrm{sgn}\,({#1})}

\definecolor{deeppink}{HTML}{ff1493}
\definecolor{royalblue}{HTML}{4169e1}
\definecolor{gainsboro}{HTML}{dcdcdc}

\title{\emph{On the convergence rates of generalized conditional gradient method for fully discretized Mean Field Games}}
\author{Haruka Nakamura\thanks{Graduate School of Mathematical Sciences, The University of Tokyo, Komaba 3-8-1, Meguro-ku, Tokyo 153-8914, Japan. \texttt{nakamura-haruka@g.ecc.u-tokyo.ac.jp}} \quad and \quad  
Norikazu Saito\thanks{Graduate School of Mathematical Sciences, The University of Tokyo, Komaba 3-8-1, Meguro-ku, Tokyo 153-8914, Japan. \texttt{norikazu@g.ecc.u-tokyo.ac.jp}}
}

\date{\today}

\begin{document}

\maketitle

\begin{abstract}   
We study convergence rates of the generalized conditional gradient (GCG) method applied to fully discretized Mean Field Games (MFG) systems.
While explicit convergence rates of the GCG method have been established at the continuous PDE level, a rigorous analysis that simultaneously accounts for time–space discretization and iteration errors has been missing.
In this work, we discretize the MFG system using finite difference method and analyze the resulting fully discrete GCG scheme.
Under suitable structural assumptions on the Hamiltonian and coupling terms, we establish discrete maximum principles and derive explicit error estimates that quantify both discretization errors and iteration errors within a unified framework.
Our estimates show how the convergence rates depend on the mesh sizes and the iteration number, and they reveal a non-uniform behavior with respect to the iteration.
Moreover, we prove that higher convergence rates can be achieved under additional regularity assumptions on the solution.
Numerical experiments are presented to illustrate the theoretical results and to confirm the predicted convergence behavior.
\end{abstract}

\bigskip

\noindent \textbf{Key-words:} mean field games, finite difference method, generalized conditional gradient, 

\bigskip

\noindent \textbf{AMS classification (2020):} 
65M06, 
35Q89, 
49N80, 

\section{Introduction}

The Mean Field Games (MFG) system provides a continuous mathematical framework for describing approximate Nash equilibria of non-cooperative games involving a very large population of interacting agents.
It was independently introduced in 2006 by Lasry and Lions \cite{ll06a,ll06b,LL07}, and by Huang et al.\cite{H06}, motivated by different perspectives: economics in the former case and control of multi-agent systems in the latter.
More precisely, the MFG system is formulated as a nonlinear system of partial differential equations (PDE) consisting of the Hamilton--Jacobi--Bellman (HJB) equation, which determines the optimal control $\nabla u$, and the Fokker--Planck (FP) equation, which determines the evolution of the agents’ density distribution $m$. The MFG system has found applications across a wide range of disciplines, including control of autonomous vehicle swarms and crowd dynamics, as well as mathematical biology, engineering, economics, and machine learning.
Despite their mathematical complexity, the MFG system has made significant contributions to decision-making under uncertainty and the control of large-scale dynamical systems; see 
\cite{MR3752669,MR3753660,lauriere2024,10508221} for more detail. 
Consequently, the development of numerical methods that can compute the MFG  solutions both accurately and within realistic computational times is of great practical importance. 
In fact, many studies have been reported from a numerical analysis viewpoint, covering finite difference methods \cite{ach13, A13, A12, AC10, MR4621911, G12, I23}, finite element methods \cite{and17, os25}, deep neural networks (DNNs) \cite{MR4236167}, and so on.

However, introducing a discretization alone is not sufficient for obtaining a practical numerical solver for the MFG system. 
The MFG system is an initial–terminal value problem composed of nonlinear PDEs evolving forward and backward in time, or equivalently, a $(d+1)$-dimensional space–time boundary value problem, where $d$ denotes the spatial dimension. As a result, unlike standard initial value problems, the MFG system cannot be solved by simple forward time marching; instead, the entire space–time domain must be treated simultaneously. Even in two spatial dimensions, discretization leads to a finite-dimensional system with an enormous number of degrees of freedom, making its solution computationally demanding and typically requiring large-scale computing resources and long runtimes. This difficulty implies that solving the MFG system within realistic time constraints is far from trivial.

To address these challenges, decoupling-based iterative methods have proven effective. The basic idea is to obtain the MFG solution by alternately and iteratively solving the HJB and FP equations.
Representative examples of such approaches include monotone iterations \cite{G12} and policy iteration methods \cite{C21,CT22}.
In this paper, we focus the \emph{Generalized Conditional Gradient} (GCG) method \cite{LP23}.
In particular, Lavigne and Pfeiffer \cite{LP23} were the first to derive explicit convergence rates for the GCG method under general Hamiltonians.
Their analysis focused on spatially global coupling terms with the potentials (as well as price-based coupling effects, which are not considered in the present work). In our previous work \cite{NS26}, we extended their results to the case of locally coupled interactions (still having the potentials), even though under the restriction to a prototypical Hamiltonian that will be specified later.
However, both their analysis and ours are conducted at the PDE level.
Although numerical experiments are reported, a rigorous theoretical analysis of the discretization itself is not provided. In fact, one of the authors studied in \cite{I23} the convergence of the fictitious-play iteration applied to fully discretized MFG systems by the finite difference schemes. For a certain class of MFG systems, convergence was proved while taking into account both the discretization and the iterative procedure. This study was motivated by the analysis of monotone iteration methods for the MFG system developed by \cite{G12}. However, explicit convergence rates were not addressed. 
In particular, to the best of our knowledge, there is currently no work that \emph{simultaneously} derives explicit error estimates for both the \emph{discretization error} and the \emph{iteration error} when applying the GCG method to fully discretized MFG systems relevant for practical computations.

The main objective of this paper is to address this open problem by discretizing the MFG system using finite difference methods and providing a unified analysis of both sources of error. We now introduce the class of MFGs considered in this work.
Letting $Q := (0, T) \times \T^d$ with the $d$-dimensional torus $\mathbb{T}^d := (\mathbb{R}/\mathbb{Z})^d$ and $T > 0$, we consider  \begin{subequations}
\label{MFG_eq}
\begin{alignat}{2}
-\del_t u - \nu \Delta{u} + H(t, x, \nabla u) &= f(t,x,m) &\quad& \mbox{in } Q, \label{MFG_eq1}\\ 
u(T,x) &= g(x) && \mbox{on } \T^d, \label{MFG_eq2}\\
\del_t m - \nu \Delta{m} - \nabla \cdot \left( m \nabla_p H(t, x, \nabla u)\right) &= 0 && \mbox{in } Q, \label{MFG_eq3}\\ 
m(0,x) &= m_0(x) && \mbox{on } \T^d,\label{MFG_eq4}
\end{alignat}
\end{subequations}
where $\nu > 0$ is a given constant.  The unknown functions are $u:Q\to \mathbb{R}$ and $m:Q\to \mathbb{R}$, which we call the value and density functions, respectively. We are given $g : \T^d \to \R$, 
$m_0 : \T^d \to \R$, and 
$f: Q \times \mathcal{D}_0(\T^d) \to \R$, and they are called the terminal condition, initial condition, and coupling term, respectively. The space  $\mathcal{D}_0(\T^d)$ denotes a set of density distributions on $\T^d$ which will be defined in \eqref{eq:density} below. 
We interpret the right-hand side of \eqref{MFG_eq1} as \(f(t,\cdot,m(t,\cdot))\), and, when no confusion arises, write it simply as \(f(t,x,m)\) or \(f(t,\cdot,m)\). The function $H : Q \times \R^d \to \R$ is called the Hamiltonian. 
Throughout this paper, we assume that $H$ is given as
\begin{equation}
    \label{eq:quadH}
H(t, x, p) = \frac{1}{2} |p|^2 - h \cdot p,
\end{equation}
where $h : Q \to \R^d$ is a given vector-valued function. ($|\cdot|$ and $\cdot$ denote the Euclid norm and inner-product in $\R^d$, respectively.) 
Under this assumption, the problem setting considered in this paper cannot be regarded as fully general. Nevertheless, from the viewpoint of control engineering, this choice of $H$ corresponds to modeling microscopic system dynamics with advection effects $h$, and is recognized as an important and relevant class in control theory; see \cite[\S 2]{I23}, \cite[Chapter 6]{Nijmeijer1990} and \cite{Bagdasaryan2019} for example. 

The GCG method reads as follows: starting with the initial guess $\bar{m}_0$, we generate the sequences ${u}_{k}$, ${m}_{k}$, $\gamma_k$ and $\bar{m}_{k+1}$ for $k=0,1,\ldots$ by 
\begin{subequations} 
\label{GCG}
\begin{alignat}{3}
\gamma_k(t,\cdot) &= f(t,\cdot,\bar{m}_k) &\quad& \mbox{in } Q, \label{GCGg}\\ 
 -\del_t u_k - \nu \Delta{u}_k + H(t, \cdot, \nabla{u}_k) &= \gamma_k(t,\cdot) &\quad& \mbox{in } Q, \label{GCGa}\\ 
 u_k(T, \cdot) &= g && \mbox{on } \T^d,\label{GCGb} \\
\del_t m_k - \nu \Delta{m}_k - \nabla \cdot (m_k \nabla_p H(t, \cdot, \nabla u_k)) &= 0 && \mbox{in } Q, \label{GCGc}\\ 
m_k(0, \cdot) &= m_0 && \mbox{on } \T^d, \label{GCGd}
\end{alignat}
and $\bar{m}_{k+1}$ is updated by 
\begin{equation}
\bar{m}_{k+1}=(1-\delta_k) \bar{m}_k+\delta_k m_k\quad \mbox{in }Q,
\label{GCGf}
\end{equation}
\end{subequations}
where $\delta_k\in (0,1)$ is a suitably chosen parameter. For example, if $\delta_k$ is chosen as $\delta_k = 1/(k+1)$, the updating rule is rewritten as 
\begin{equation*}
\bar{m}_{k+1} = \frac{1}{k+1} \sum_{j=0}^{k} m_j.
\end{equation*}
In this case, \eqref{GCG} is referred to as the \emph{fictitious-play} method studied in \cite{CH17} and, therefore, the GCG method is interpreted as a generalization of the fictitious-play method. Moreover, several alternative choices of $\delta_k$ may be employed to accelerate convergence. In particular, an \emph{adaptive step-size} selection based on the variational structure of the MFG system can be effective.
While this strategy can indeed reduce the number of iterations, the computational cost per iteration is not negligible. 
(See Section \ref{sec:remarks}, Item X for more detail about adaptive step-size selection methods.)  
On the other hand, by generalizing the fictitious-play iteration, one may adopt a simpler choice of the form 
\begin{equation}
\label{eq:predefine}
\delta_k = \frac{k_2}{k+k_1}\qquad 
(\mbox{$1 \le k_2 \le k_1$ are given constants}). 
\end{equation}
We refer to this approach as a \emph{predefined step-size} selection. With an appropriate choice of the parameters $k_1$ and $k_2$, this strategy is sufficiently efficient and has the advantage of a lower computational cost per iteration.
In this paper, we therefore always assume the predefined step-size selection \eqref{eq:predefine} for $\delta_k$. Moreover, our error analysis is carried out with the aim of providing guidance for a strategic choice of the parameters $k_1$ and $k_2$. 

In this paper, we follow the approach of \cite{I23} and reformulate the GCG method into a more convenient form prior to discretization, instead of discretizing it directly. Thus, we apply the \emph{Cole--Hopf transformation} 
\begin{equation}
\label{eq:ch}
\phi_k = \exp\left(\frac{-u_k}{2 \nu}\right)
\end{equation}
with $m_k = \phi_k \psi_k$. Then, 
in the smooth framework, the GCG method \eqref{GCG} admits the following equivalent formulation: starting with the initial guess $\bar{m}_0$, we generate the sequences ${\phi}_{k}$, ${\psi}_{k}$, $\gamma_k$ and $\bar{m}_{k+1}$ for $k=0,1,\ldots$ by 
\begin{subequations} 
\label{CH_MFG_eq}
\begin{alignat}{2}
\gamma_k(t,\cdot) &= f(t,\cdot,\bar{m}_k) &\quad& \mbox{in } Q, \label{CH_MFG_eq-0}\\ 
\del_t \phi_k + \nu \Delta{\phi_k} + h\cdot \nabla \phi_k &= \frac{1}{2 \nu} \gamma_k \phi_k &\quad& \mbox{in } Q, \label{CH_MFG_eq-1}\\
\phi_k(T,\cdot) &= \exp{\left(-\frac{g}{2 \nu}\right)}&\quad& \mbox{on }\T^d ,\label{CH_MFG_eq-2}\\
\del_t \psi_k - \nu \Delta{\psi_k} + \nabla \cdot (h \psi_k) &= -\frac{1}{2 \nu} \gamma_k \psi_k &&  \mbox{in } Q, \label{CH_MFG_eq-3}\\
 \psi_k(0,\cdot) &= \frac{m_0}{\phi_k(0)} &&  \mbox{on } \T^d,\label{CH_MFG_eq-4}\\
m_k &= \phi_k \psi_k, &\quad&\mbox{in }Q, \label{CH_MFG_eq-5}
\end{alignat}
and $\bar{m}_{k+1}$ is updated by 
\begin{equation}
\bar{m}_{k+1}=(1-\delta_k) \bar{m}_k+\delta_k m_k\quad \mbox{in }Q.
\label{CH_MFG_eq-6}
\end{equation}
\end{subequations}
In what follows, we call \eqref{CH_MFG_eq} the \emph{CH-GCG method}. 
In Section \ref{sec:main}, we introduce a finite difference approximation for \eqref{CH_MFG_eq}, referred to as the \emph{discrete GCG scheme}. The term ``scheme'' is used instead of ``method'' to emphasize that it consists of a finite system of unknowns.

As for the coupling terms, we are mainly interested in the local coupling terms including  
\begin{equation}
\label{eq:ex-f}
f(t, x, m) = \varphi(m(t,x)) ,
\end{equation}
where $\varphi:\mathbb{R}_{\ge 0}\to\mathbb{R}$ denotes a continuous function such as 
$\varphi(s)=s^\alpha$ and $\varphi(s)=\min\{s^\alpha,\beta\}$ with $\alpha\ge 1$ and $\beta>0$ for $s\ge 0$.
This is only a prototypical example, not an exhaustive list.
The precise assumptions imposed on $f$ considered in our work will be stated in \textup{(f-B)}, \textup{(f-L)}, \textup{(f-M)}, and \textup{(f-P)} below. Such local coupling terms are of significant practical relevance. As a concrete example, Bonnemain et al. \cite{B23} proposed an MFG model with $f(t,x,m)=m(t,x)$ to describe the response of a crowd to an intruder moving through a static group.
They demonstrated that MFG successfully capture rational lateral avoidance behavior of the crowd, which cannot be reproduced by traditional mathematical models.
Furthermore, in many studies on the MFG system, numerical experiments are often performed using local coupling terms; see \cite{ach13, A12, A13, AC10, C21, G12, I23, MR4596353, os25} for example. 
Nevertheless, our analysis can also be applied to globally coupling terms.
Additional explanations on this point are given in Section \ref{sec:gct}.

At this stage, we briefly summarize our main results. 
As will be stated in Theorem \ref{thm:DMP}, the solutions of the discrete GCG scheme inhibit the discrete maximum principle that plays an important role in error analysis.  Our error estimates are, for example,  
\begin{equation*}
\left[\sum_{n=0}^{N_t-1} \left(\max_{i} |\bar{M}_{n,i}^k - \bar{m}(t_n,x_i)|\right)^2 \Delta t\right]^{\frac{1}{2}} \le C_{\bar{m}}^{(1)}(k) \left[(\Delta t)^{\frac{\eta}{2}} + (\Delta x)^{\eta}\right] + \frac{C_{\bar{m}}^{(2)}}{(k+k_1)^{\frac{s}{r}}}
\end{equation*}
Here, \(\bar{M}_{n,i}^k\) approximates \(\bar{m}(t_n,x_i)\) at the grid point \((t_n,x_i)\), with grid sizes \(\Delta t\) and \(\Delta x\), and $N_t$ denotes the number of subdivisions of the interval $[0,T]$. 
The quantities \(\eta \in (0,1)\), \(s>0\), \(r>d\), and \(C_{\bar{m}}^{(2)}>0\) are constants, while \(C_{\bar{m}}^{(1)}(k)>0\) depends on \(k\).
We investigate these error estimates within the problem setting in which the MFG system \eqref{MFG_eq} admits a classical solution, as established in \cite{NS26}; see Proposition \ref{B21_thm1}. 
As a consequence, we succeed in proving the above error estimates without imposing any additional regularity assumptions on the solution (see Theorem \ref{I23_thm3.5}). Moreover, we show that, if higher regularity can be assumed, the convergence rate $(\Delta t)^{\frac{\eta}{2}} + (\Delta x)^{\eta}$ can be improved to $\Delta t + \Delta x$ or $\Delta t + (\Delta x)^2$ (see Theorems \ref{I23_thm3.5a} and \ref{I23_thm3.5b}). 
However, at this point, we must make the following important remark. 
The growth of $C_{\bar{m}}^{(1)}(k)$ is 
bounded from above by a polynomial function and from below by a logarithmic function as $k\to\infty$ (see Theorem \ref{I23_thm3.5}). Thus, the convergence property above is not uniform with respect to the iteration number $k$. Nevertheless, the above error estimates still allow us to obtain convergence results with respect to both the discretization and the iteration (see Corollary \ref{cor:1}). 

We conclude this introduction by outlining the structure of the paper.

In Section~\ref{sec:settings}, we present the basic problem setting and review several results established in \cite{NS26}, including the existence of classical solutions to the MFG system (Proposition~\ref{B21_thm1}) and uniform boundedness results for the sequences generated by the GCG method (Proposition~\ref{prop:initial} and Proposition~\ref{LP23_prop23}).
In Section~\ref{sec:main}, we describe the details of the discretization, in particular the discrete GCG scheme, and state our main results, namely Theorems~\ref{thm:DMP}, \ref{I23_thm3.5}, \ref{I23_thm3.5a}, and \ref{I23_thm3.5b}, together with Corollary~\ref{cor:1}.
Section~\ref{sec:preliminary} is devoted to preparatory material for the proofs of the main results.
In particular, we recall convergence results for the GCG method previously studied in \cite{NS26}, and we further establish new regularity results for solutions of the CH–GCG scheme.
The proofs of the main theorems are then presented in Section~\ref{sec:proof}.
In Section~\ref{sec:numerical_experiment}, we report the results of numerical experiments.
In Section~\ref{sec:gct}, we derive error estimates corresponding to our main results under the assumptions on the coupling term considered in \cite{LP23}.
Finally, in Section~\ref{sec:remarks}, we summarize our findings and provide several concluding remarks.

\subsection*{Notation}

We here collect function spaces and norms used in this paper. 

For a smooth function $u:Q\to \mathbb{R}$, we write $\del_i u= \del_{x_i}u$ and 
$\del_{i}\del_j u= \del_{x_i}\del_{x_j}u$ and so on. For a smooth vector-valued function $v=(v_i):Q\to \mathbb{R}^d$, we write $D_x v = (\del_j v_i)_{1 \le i, j \le d} \in \R^{d \times d}$.   

Let $X$ denote $\T^d$, $Q$ or $(0,T)$. 
For any $k \in \Z_{\ge 0}$, $\mathcal{C}^k(X; \R^d)$ denotes the space of functions of class $\mathcal{C}^k$ on $X$ valued in $\R^d$. We simply write $\mathcal{C} := \mathcal{C}^0$ and $\mathcal{C}^k(X) := \mathcal{C}^k(X;\R)$.

For any $\alpha \in (0, 1)$, $\mathcal{C}^{k + \alpha}(X;\R^d)$ denotes the H\"{o}lder space of functions whose derivatives up to order $k$ are H\"{o}lder continuous of order $\alpha$. 
The function $u$ of $\mathcal{C}(Q)$ belongs to $\mathcal{C}^{\frac{\alpha}{2}, \alpha}(Q)$ if and only if $u(\cdot, x) \in \mathcal{C}^{\frac{\alpha}{2}}(0, T)$ and $u(t, \cdot) \in \mathcal{C}^{\alpha}(\T^d)$. 
If $\del_t u, \del_i u, \del_i \del_j u \in \mathcal{C}^{\frac{\alpha}{2}, \alpha}(Q)$ ($1 \le i, j \le d$), we write as $u\in \mathcal{C}^{1+\frac{\alpha}{2}, 2+\alpha}(Q)$. 

For any $p \in [1, \infty]$, $L^p(X)$ and $W^{k, p}(X)$ denote the standard Lebesgue and Sobolev spaces, respectively. 
We use the standard Bochner space $L^p(0,T;W^{k,p}(\mathbb{T}^d))$. 
Throughout this paper, we fix a constant $q\in\mathbb{R}$ satisfying 
\begin{equation}
q > d+2.
\end{equation}
Then, set  
$$
W^{1, 2, q}(Q) := W^{1, q}(Q) \cap L^q(0, T; W^{2, q}(\T^d)),
$$
and 
$$
\|u\|_{W^{1, 2, q}(Q)} := \|u\|_{W^{1, q}(Q)} + \|u\|_{L^q(0, T; W^{2, q}(\T^d))}.
$$
The space of control inputs in the FP equation is defined as
$$
\Theta := \left\{v \in \mathcal{C}(Q; \R^d) \mid D_x v \in L^q(Q; \R^{d \times d})\right\}, 
$$
equipped with the norm
$$
\|v\|_{\Theta} := \|v\|_{L^{\infty}(Q: \R^d)} + \|D_x v\|_{L^q(Q; \R^{d \times d})},
$$
where $D_x v = (\del_j v_i)_{1 \le i, j \le d} \in \R^{d \times d}$ is the Jacobi matrix of $v = (v_1, \dots, v_d)^{\top}$.

Define the space of coupling terms and its norm as 
$$
\Gamma := L^{\infty}(0, T; W^{1, \infty}(\T^d))\cap \mathcal{C}(Q) 
,\quad 
\|\gamma\|_{\Gamma} := \|\gamma\|_{L^{\infty}(0, T; W^{1, \infty}(\T^d))}.
$$
The set of density distributions on $\T^d$ is defined as
\begin{equation}
\label{eq:density}
\mathcal{D}_0(\T^d) := \left\{m \in \mathcal{C}(\T^d) \mid  m \ge 0\right\}.
\end{equation}

Finally, unless otherwise stated, the letter $C$ denotes a generic positive constant independent of the discretization parameters $\Delta t$, $\Delta x$ and iteration number $k$. Whenever we need to emphasize that the constant $C$ depends on other parameter $R$, we express it by $C(R)$.

\section{Problem settings}
\label{sec:settings}
We make the following assumptions on $H$, $f$, $g$ and $m_0$. 

\begin{description} \label{assum-L}
\item[(H)] The Hamiltonian $H$ is given by \eqref{eq:quadH} where $h \in \mathcal{C}^{1+\alpha_0}(Q; \R^d)$ with a constant $\alpha_0\in (0,1)$. 
\item[(f-B)] The coupling term $f$ is non-negative and bounded. That is, there exists a constant $C_0$ satisfying:
\label{eq:f-123}
\begin{equation}
0\le f(t, x, m) \le C_0
\label{eq:f-1}
\end{equation}
for $(t, x) \in Q$, and $m\in \mathcal{D}_0(\T^d)$. 

\item[(f-L)] The coupling term $f$ is Lipschitz continuous. That is, there exist constants $C_0$, $L_f > 0$ and $\alpha_0 \in (0, 1)$ satisfying: 
\begin{subequations}
\begin{equation}
|f(t_2, x, m_2) - f(t_1, x, m_1)| \le C_0 |t_2-t_1|^{\alpha_0} + L_f \|m_2 - m_1\|_{L^{\infty}(\T^d)}
\label{eq:f-L1a}
\end{equation}
for $(t_1, x),(t_2, x) \in Q$, and $m_1,m_2 \in \mathcal{D}_0(\T^d)$. 
Moreover, if $m \in \mathcal{D}_0(\T)$ is a Lipschitz continuous function (of $x$) with the Lipschitz constant $\operatorname{Lip}_x(m)$, then $f(t,\cdot,m)$ is also a Lipschitz continuous function of $x$ with the Lipschitz constant $\operatorname{Lip}_x(m;f)$: 
\begin{equation}
\label{eq:f-L1b}
|f(t, x_2, m) - f(t, x_1, m)| \le \operatorname{Lip}_x(m;f) |x_2 - x_1|
\end{equation}
\end{subequations}
for $(t, x_1),(t, x_2) \in Q$. Moreover, if $\operatorname{Lip}_x(m)\le R$ for some $R>0$, there is a constant $C(R)$ depending on $R$ such that $\operatorname{Lip}_x(m;f)\le C(R)$.

\item[(f-M)] The coupling term $f$ is monotone with respect to the third argument. That is, 
\begin{equation}
\label{eq:f-4}
\int_{\T^d} \left[f(t, x, m_2) - f(t, x, m_1)\right][m_2(x) - m_1(x)]\, dx \ge 0
\end{equation}
for $t \in [0, T]$ and $m_1,m_2 \in \mathcal{D}_0(\T^d)$. 
\item[(f-P)] The coupling term $f$ has a potential. That is, there exists a continuous function $F : [0, T] \times \mathcal{D}_0(\T^d) \to \R$ such that
\begin{equation}
\label{eq:f-5}
F(t, m_2) - F(t, m_1) = \int_0^1 \int_{\T^d} f(t, x, sm_2 + (1-s)m_1)[m_2(x)-m_1(x)]~ dxds.
\end{equation}
\item[(TIV)] $g \in \mathcal{C}^{2 + \alpha_0}(\T^d)$ and $m_0 \in \mathcal{D}_0(\T^d) \cap \mathcal{C}^{2 + \alpha_0}(\T^d)$ with a constant $\alpha_0\in (0,1)$. 
\end{description}

\begin{rem}
\label{rem:fB}
In most works on MFG systems (see, e.g., \cite{LP23,NS26}), it is assumed that
\[
|f(t,x,m)| \le C_0,
\]
which is more general than assumption \textup{(f-B)}.
However, our assumption \textup{(f-B)} is not restrictive, since replacing \(f\) by \(f+C_0\) leaves the optimal control \(\nabla u\) and the density
\(m\) in the MFG system \eqref{MFG_eq} unchanged; see \cite[p.~4]{G12}.
\end{rem}

\begin{rem}
    \label{rem:potential}
    Under assumption \textup{(f–P)}, the MFG system \eqref{MFG_eq} is referred to as a \emph{potential MFG}.
This assumption is only required for the application of Propositions~\ref{LP23_thm7} and~\ref{LP23_thm8-2}. This condition is not used for any other purpose.
\end{rem}

\begin{rem}
\label{rem:unique}
The monotonicity \textup{(f-M)} of $f$
 is assumed only to guarantee the uniqueness of solutions to \eqref{MFG_eq}. 
 This condition is not used for any other purpose. 
\end{rem}

\begin{rem}
\label{rem:constants}
The constant $\alpha_0$ appearing in \textup{(H)}, \textup{(f-L)}, and \textup{(TIV)} are assumed to take the same value. 
The constant $C_0$ in \textup{(f-B)}, \textup{(f-L)}  are assumed to take the same value too. 
\end{rem}

\begin{rem}
In this paper, we treat the coupling term $f$ as a mapping from $Q \times \mathcal{D}_0(\mathbb{T}^d)$ to $\mathbb{R}$. 
This is different from \cite{NS26}, where $f$ is treated as a mapping from 
$Q \times \mathcal{D}_1(\mathbb{T}^d)$ to $\mathbb{R}$. 
Here,
\begin{equation*}
\mathcal{D}_1(\mathbb{T}^d) := \left\{ m \in \mathcal{C}(\mathbb{T}^d) \mid m \ge 0,\; \int_{\mathbb{T}^d} m(x)\,dx = 1 \right\}.
\end{equation*}
This difference arises because, for approximation functions of the density $m(x,t)$, 
the total mass is not necessarily equal to $1$ in general. 
However, in the convergence analysis presented in Section~\ref{sec:proof}, 
the constraint that the total mass be equal to $1$ is not essential and is not required. 
Therefore, treating $f$ as a mapping from $Q \times \mathcal{D}_0(\mathbb{T}^d)$ to $\mathbb{R}$ 
does not cause any inconvenience.
\end{rem}

We recall the fundamental results from \cite[Theorems 2.7, 2.10]{NS26}. 
(The statement of Proposition \ref{prop:initial} is described in the proof of \cite[Theorem 2.10]{NS26}.)

\begin{prop}
\label{B21_thm1}
Suppose that 
\textup{(H)}, 
\textup{(f-B)}, \textup{(f-L)}, and \textup{(TIV)} are satisfied. 
Then, the MFG system \eqref{MFG_eq} admits a unique solution $(\bar{u},\bar{m})\in W^{1,2,q}(Q)\times W^{1,2,q}(Q)$. Moreover, $\bar{\gamma}=f(\cdot,\cdot,\bar{m})\in\Gamma$. Furthermore, they are classical solutions in the sense that 
\[
(\bar{m}, \bar{u}, \bar{\gamma}) \in \mathcal{C}^{1+\frac{\alpha}{2}, 2 + \alpha}(Q)  \times \mathcal{C}^{1+\frac{\alpha}{2}, 2+\alpha}(Q) \times \mathcal{C}^{\alpha}(Q)
\]
hold with some $\alpha\in (0,1)$. 
\end{prop}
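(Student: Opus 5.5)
We will prove existence by a Schauder fixed-point argument on the coupling variable, obtain the stated regularity by parabolic bootstrapping, and derive uniqueness from the Lasry--Lions monotonicity argument. Uniqueness is the one place where \textup{(f-M)} is needed (cf.\ Remark \ref{rem:unique}), so we assume it there. The Cole--Hopf structure \eqref{eq:ch} gives the a priori bounds.

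\textbf{Fixed-point map.} Let $K$ be a closed convex set of densities to be specified. For $\gamma=f(\cdot,\cdot,\mu)$ with $\mu\in K$, we solve the HJB equation \eqref{GCGa}--\eqref{GCGb}. Under \eqref{eq:ch} it becomes the linear backward equation \eqref{CH_MFG_eq-1}--\eqref{CH_MFG_eq-2}.

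\emph{Bounds on $\phi$.} Since $0\le\gamma\le C_0$ by \textup{(f-B)}, the maximum principle gives
\[
e^{-(\|g\|_\infty + C_0T)/(2\nu)} \le \phi \le e^{\|g\|_\infty/(2\nu)}.
\]
Hence $u=-2\nu\log\phi$ is bounded independently of $\mu$.

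\emph{Bounds on $u$.} Linear $L^q$ theory for \eqref{CH_MFG_eq-1}, with bounded coefficients, a bounded right-hand side and $q>d+2$, gives $\phi\in W^{1,2,q}(Q)$ with a uniform bound. The embedding $W^{1,2,q}\hookrightarrow \mathcal{C}^{\frac{\beta}{2},1+\beta}$ then bounds $\nabla u$ uniformly in a H\"older norm.

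\emph{Bounds on $m$.} Next we solve the FP equation \eqref{GCGc}--\eqref{GCGd} with drift $\nabla u-h$, using $\nabla_pH=p-h$. It is a linear equation in divergence form whose drift has $D_x(\nabla u - h)\in L^q$. The maximum principle, together with mass conservation, gives $m\ge0$. $L^q$ and Schauder estimates give $m\in W^{1,2,q}(Q)$, and hence a uniform $\mathcal{C}^{\frac{\beta}{2},1+\beta}$ bound.

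We take $K$ to be the ball of $\mathcal{C}^{\frac{\beta}{2},\beta}(Q)$ functions that are nonnegative with the resulting uniform bound. The map $\mu\mapsto m$ then sends $K$ into a compact subset of $\mathcal{C}(Q)$ by Arzel\`a--Ascoli. Continuity of the map follows from \eqref{eq:f-L1a} and the stability of the linear problems. Schauder's theorem gives a fixed point $\bar m$, and we set $\bar u$ accordingly.

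\textbf{Regularity.} With $\bar m$ Lipschitz in $x$, \eqref{eq:f-L1b} makes $\bar\gamma$ Lipschitz in $x$ with a bounded constant. Condition \eqref{eq:f-L1a} makes $\bar\gamma$ H\"older in $t$, since $\bar m$ is H\"older in time. Hence $\bar\gamma\in\Gamma\cap\mathcal{C}^{\alpha}(Q)$ for some $\alpha\in(0,1)$.

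Parabolic Schauder theory then upgrades $\phi$ to $\mathcal{C}^{1+\frac{\alpha}{2},2+\alpha}$, using $h\in\mathcal{C}^{1+\alpha_0}$ and $g\in\mathcal{C}^{2+\alpha_0}$ from \textup{(TIV)}. Since $\phi$ is bounded below, $\bar u=-2\nu\log\phi$ has the same regularity.

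Writing the FP equation in nondivergence form, its coefficients are $\nabla\bar u-h\in\mathcal{C}^{1+\alpha}$ and $\Delta\bar u-\nabla\cdot h\in\mathcal{C}^\alpha$. With $m_0\in\mathcal{C}^{2+\alpha_0}$, Schauder theory gives $\bar m\in\mathcal{C}^{1+\frac{\alpha}{2},2+\alpha}$. We shrink $\alpha$ as needed at each step.

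\textbf{Uniqueness.} Take two solutions and form the standard Lasry--Lions cross-duality. Test the difference of the HJB equations with $m_1-m_2$, and the difference of the FP equations with $u_1-u_2$. Integrating over $Q$, the boundary terms cancel because $u_1(T)=u_2(T)$ and $m_1(0)=m_2(0)$. This leaves
\[
\int_Q \big(f(m_1)-f(m_2)\big)(m_1-m_2)\,dx\,dt + \frac12\int_Q (m_1+m_2)\,|\nabla u_1-\nabla u_2|^2\,dx\,dt = 0.
\]
The drift term is exactly quadratic because $H(t,x,p)=\frac12|p|^2-h\cdot p$.

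By \textup{(f-M)} both integrals are nonnegative, so both vanish. Hence $\nabla u_1=\nabla u_2$ wherever $m_1+m_2>0$. We need $m_i>0$ to conclude on all of $Q$. The strong maximum principle applied to the FP equation gives $m_i>0$ for $t>0$, since $m_0\ge0$ and $m_0\not\equiv0$. The two FP equations then have the same drift, so $m_1=m_2$ by linear uniqueness. The HJB equations then have the same right-hand side, so $u_1=u_2$.

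\textbf{Main obstacle.} The hardest step is obtaining the a priori bounds uniformly in $\mu$. This requires controlling $\nabla u$ in $L^\infty$, which the Cole--Hopf transform and \textup{(f-B)} make possible, and then the Lipschitz bound on $m$ needed to feed \eqref{eq:f-L1b}. The risk is circularity: the Lipschitz bound on $m$ depends on $\nabla u$, which depends on $\gamma$, which depends on $\mu$. We break the loop as follows. The $W^{1,2,q}$ bound on $m$ uses only $\|\nabla u\|_\infty$ and $\|D^2u\|_{L^q}$. These depend only on $\|\gamma\|_\infty\le C_0$, and not on the regularity of $\mu$.
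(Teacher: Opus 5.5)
\textbf{Verdict.} Your argument is correct, with the uniqueness caveat discussed below. It is a self-contained reconstruction of something the paper does not prove: Proposition~\ref{B21_thm1} is simply imported from \cite[Theorems 2.7, 2.10]{NS26}. That source works in the $W^{1,2,q}$/$\Theta$ framework of \cite{LP23,B21} and gets classical regularity from the Schauder estimate of Lemma~\ref{B21_thm7}.

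\textbf{What your route adds.} It makes explicit why the \emph{local} coupling causes no circularity. Through the Cole--Hopf variable, the following quantities are controlled independently of $\mu$, only through $\|\gamma\|_{L^\infty}\le C_0$: first $\|u\|_{W^{1,2,q}(Q)}$, hence $\|\nabla u-h\|_{\Theta}$, hence $\|m\|_{W^{1,2,q}(Q)}$ and $\operatorname{Lip}_x(m)$. This is exactly what allows \eqref{eq:f-L1b} to be used. Your bootstrap (H\"older continuity of $\bar\gamma$ from \eqref{eq:f-L1a}--\eqref{eq:f-L1b}, then Schauder theory for $\phi$) is the same argument the paper later runs for $\phi_k,\psi_k$ in the proof of Proposition~\ref{regularity_phi_psi}.

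\textbf{What the citation adds.} It supplies the rigorous linear theory that you only invoke. The key piece is unique solvability and $W^{1,2,q}$ bounds for the Fokker--Planck equation with drift in $\Theta$. At the fixed-point stage its zeroth-order coefficient $\nabla\cdot(\nabla u-h)$ is merely in $L^q$, so this is $L^q$ theory rather than Schauder theory (e.g.\ \cite[Ch.~IV, Thm.~9.1]{lsu68}).

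\textbf{Uniqueness and \textup{(f-M)}.} You are right to bring in \textup{(f-M)}. As printed, the proposition claims uniqueness under \textup{(H)}, \textup{(f-B)}, \textup{(f-L)}, \textup{(TIV)} only. However, Remark~\ref{rem:unique} states that \textup{(f-M)} is assumed precisely to guarantee uniqueness, and without monotonicity uniqueness is known to fail in general. So this is a correction of the statement, not a gap in your proof.

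\textbf{One small repair.} \textup{(TIV)} does not exclude $m_0\equiv 0$, and you do not need the strong maximum principle anyway. The vanishing of $\int_Q (m_1+m_2)|\nabla u_1-\nabla u_2|^2\,dx\,dt$ gives $m_1(\nabla u_1-h)=m_1(\nabla u_2-h)$ a.e. Hence $m_1$ solves the Fokker--Planck equation with the drift of $u_2$, and linear uniqueness yields $m_1=m_2$ with no positivity assumption. The HJB equations then share the same right-hand side, so $u_1=u_2$.
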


\begin{prop}
    \label{prop:initial}
For $\alpha_0\in (0,1) $ appearing in \textup{(TIV)}, let 
$m_0\in \mathcal{D}_0(\T^d) \cap \mathcal{C}^{2 + \alpha_0}(\T^d)$ and 
$v\in \mathcal{C}^{\alpha_0/2,1+\alpha_0}(Q;\mathbb{R}^d)$ be given. 
Then, there exist a constant $\beta\in (0,1)$ depending on $\alpha_0$ and a unique solution $\bar{m}_0\in \mathcal{C}^{1+\frac{\beta}{2},2+\beta}(Q)$ of the initial value problem $\del_t \bar{m}_0 - \nu\Delta{\bar{m}_0}+\nabla\cdot (v\bar{m}_0)  = 0$ in $Q$ and $\bar{m}_0(0) = m_0$ on $\T^d$. Moreover, we have 
$\bar{m}_0\in W^{1,2,q}(Q)$ and $\bar{m}_0(t,\cdot)\in\mathcal{D}_0(\T^d)$ for $t\in [0,T]$. 
\end{prop}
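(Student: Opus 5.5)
The plan is to treat the initial value problem
\[
\del_t \bar{m}_0 - \nu\Delta\bar{m}_0 + v\cdot\nabla\bar{m}_0 + (\nabla\cdot v)\,\bar{m}_0 = 0, \qquad \bar{m}_0(0)=m_0,
\]
as a linear uniformly parabolic equation in non-divergence form on the torus. Here $v\in\mathcal{C}^{\alpha_0/2,1+\alpha_0}(Q;\R^d)$, so the drift coefficient $v$ is H\"older continuous and the zeroth-order coefficient $c:=\nabla\cdot v$ belongs to $\mathcal{C}^{\beta/2,\beta}(Q)$ for suitable $\beta$. The only subtlety is that the time H\"older regularity of $\nabla v$ is not explicitly given; it is only $\alpha_0/2$ in time for $v$ itself. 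I would therefore pick $\beta\le\alpha_0$ so that $c\in\mathcal{C}^{\beta/2,\beta}(Q)$. If this fails, a fallback is to interpolate between the time regularity of $v$ and the spatial $\mathcal{C}^{1+\alpha_0}$ bound, which gives H\"older continuity in time of $\nabla v$ with some smaller exponent $\beta$ depending on $\alpha_0$. This is exactly why the statement says ``$\beta$ depending on $\alpha_0$.''

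With the coefficients in $\mathcal{C}^{\beta/2,\beta}(Q)$, I would invoke classical Schauder theory for linear parabolic equations (Ladyzhenskaya--Solonnikov--Ural'tseva, periodic setting). The initial datum satisfies $m_0\in\mathcal{C}^{2+\alpha_0}(\T^d)\subset\mathcal{C}^{2+\beta}(\T^d)$, and no compatibility conditions are needed on the torus. This yields a unique solution $\bar m_0\in\mathcal{C}^{1+\frac{\beta}{2},2+\beta}(Q)$. Uniqueness within this class follows from the classical maximum principle, or from an energy estimate via Gronwall, applied to the difference of two solutions.

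Next, $W^{1,2,q}$ membership is immediate. Since $Q$ is bounded, $\del_t\bar m_0$ and $D^2_x\bar m_0$ are continuous on the compact set $\bar Q$ and hence lie in $L^q$. Therefore $\bar m_0\in W^{1,q}(Q)\cap L^q(0,T;W^{2,q}(\T^d))$.

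Nonnegativity is the remaining point. I would write $w=e^{-\lambda t}\bar m_0$ with $\lambda\ge\|\nabla\cdot v\|_{L^\infty}$, so that
\[
\del_t w-\nu\Delta w+v\cdot\nabla w+(c+\lambda)w=0, \qquad c+\lambda\ge0.
\]
The weak minimum principle then applies to this classical solution on the compact manifold $\T^d$. At a negative interior minimum, with $t>0$, we have $\del_t w\le 0$, $\Delta w\ge0$, $\nabla w=0$ and $(c+\lambda)w\le 0$. To force a contradiction one perturbs to $w+\eps t$ (or uses $\lambda$ strictly larger), which gives a strict inequality. Hence $w\ge\min(0,\min m_0)=0$, and so $\bar m_0(t,\cdot)\in\mathcal{D}_0(\T^d)$ for every $t$; continuity in $x$ is already known.

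The main obstacle is the coefficient regularity step, namely justifying that $\nabla\cdot v$ is H\"older in time with some exponent $\beta(\alpha_0)$. The remaining steps are standard.
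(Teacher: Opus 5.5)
Your proof is correct and follows the standard route that the paper relies on; the paper does not reprove the statement but imports it from \cite{NS26}. That route is: rewrite the equation in non-divergence form, apply parabolic Schauder theory as in Lemma~\ref{B21_thm7}, read off $W^{1,2,q}$ membership from continuity of $\del_t\bar{m}_0$, $\nabla\bar{m}_0$, $D_x^2\bar{m}_0$ on the compact $\overline{Q}$, and get $\bar{m}_0\ge 0$ from the maximum principle, where your choice $\lambda>\|\nabla\cdot v\|_{L^\infty}$ does supply the needed strict inequality.

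Your worry about time regularity is only a matter of how to read the hypothesis. By analogy with the paper's definition of $\mathcal{C}^{1+\frac{\alpha}{2},2+\alpha}(Q)$, $v\in\mathcal{C}^{\alpha_0/2,1+\alpha_0}$ means $v,\del_j v\in\mathcal{C}^{\alpha_0/2,\alpha_0}(Q)$, so $\beta=\alpha_0$ already works. Under the weaker reading, your interpolation fallback is also valid: it gives $\nabla v$ a time-H\"older exponent $\alpha_0^2/(2(1+\alpha_0))$.
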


\begin{prop}
\label{LP23_prop23}
Suppose that 
\textup{(H)}, 
\textup{(f-B)}, 
\textup{(f-L)}, 
\textup{(f-M)}, and \textup{(TIV)} are satisfied. 
As the initial guess $\bar{m}_0$, we take the function specified in Proposition~\ref{prop:initial}. 

\begin{itemize}
    \item[\textup{(i)}] The GCG method \eqref{GCG} generates  sequences 
$u_k\in W^{1,2,q}(Q)$, $m_k\in W^{1,2,q}(Q)$ $\gamma_k \in \Gamma$, and $\bar{m}_{k+1}\in W^{1,2,q}(Q)$ for $k=0,1,2,\ldots$. 

\item[\textup{(ii)}] These sequences are uniformly bounded. That is, there exists a constant $C > 0$ such that, for $k=0,1,2,\ldots$, 
\[
\|u_k\|_{W^{1, 2, q}(Q)}\le C,\quad 
\|m_k\|_{W^{1, 2, q}(Q)} \le C, \quad 
\|\gamma_k\|_{\Gamma} \le C,\quad 
\|\bar{m}_{k}\|_{W^{1, 2, q}(Q)}\le C.
\]
\end{itemize}

\end{prop}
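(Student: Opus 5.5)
Since this proposition is stated as a recollection from \cite{NS26}, the plan is to reproduce that argument. Existence comes from an induction on $k$, and the uniform bound comes from a priori estimates whose constants do not depend on $k$.

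\textbf{Base case.} By Proposition~\ref{prop:initial}, $\bar m_0\in W^{1,2,q}(Q)$ with $\bar m_0(t,\cdot)\in\mathcal{D}_0(\T^d)$. By Sobolev embedding ($q>d+2$), $\bar m_0$ is H\"older continuous and Lipschitz in $x$ uniformly in $t$. Hence \textup{(f-B)} and \textup{(f-L)} give $\gamma_0\in\Gamma$, with $\|\gamma_0\|_\Gamma\le C(R)$, where $R$ bounds $\operatorname{Lip}_x(\bar m_k)$.

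\textbf{Inductive step.} Suppose $\bar m_k\in W^{1,2,q}(Q)$ is nonnegative with $\|\bar m_k\|_{W^{1,2,q}}\le C_*$.

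First, $\gamma_k\in\Gamma$ with a bound depending only on $C_*$.

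Second, we solve the HJB equation \eqref{GCGa}--\eqref{GCGb}. The cleanest route is the Cole--Hopf form \eqref{CH_MFG_eq-1}, which is a linear backward parabolic equation with bounded potential $\gamma_k/(2\nu)\ge0$. Its solution $\phi_k$ exists in $W^{1,2,q}$ by $L^q$ parabolic theory. The maximum principle gives
\[
e^{-\|g\|_\infty/(2\nu)}e^{-C_0T/(2\nu)}\le \phi_k\le e^{\|g\|_\infty/(2\nu)},
\]
using only $0\le\gamma_k\le C_0$. Therefore $u_k=-2\nu\log\phi_k$ is bounded independently of $k$. Since $\gamma_k$ is $x$-Lipschitz and $g\in\mathcal{C}^{2+\alpha_0}$, $L^q$ estimates bound $\|\phi_k\|_{W^{1,2,q}}$. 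The positive lower bound on $\phi_k$ then transfers this bound to $u_k$. This also gives $\nabla u_k\in\Theta$ with a $k$-independent bound, since $W^{1,2,q}$ embeds into $\mathcal{C}^{(1+\beta)/2,1+\beta}$.

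Third, the FP equation \eqref{GCGc} is linear in $m_k$ with drift $\nabla_pH=\nabla u_k-h\in\Theta$. Standard results for $\Theta$-drifts (as in \cite{LP23}) give a unique nonnegative solution $m_k\in W^{1,2,q}$, with $\|m_k\|_{W^{1,2,q}}\le C(\|\nabla u_k-h\|_\Theta,\|m_0\|_{\mathcal{C}^{2+\alpha_0}})=:C_*'$, independent of $k$.

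\textbf{Closing the induction.} Since $\bar m_{k+1}$ is a convex combination of $\bar m_k$ and $m_k$ with $\delta_k\in(0,1)$,
\[
\|\bar m_{k+1}\|\le \max(\|\bar m_k\|,\|m_k\|).
\]
Set $C_*:=\max(\|\bar m_0\|_{W^{1,2,q}},C_*')$. The key point is that $C_*'$ depends only on $C_0$, $g$, $h$, $m_0$, and not on $C_*$: the bound on $u_k$ uses only $0\le\gamma_k\le C_0$ and the Lipschitz bound of $\gamma_k$. The induction therefore closes, and nonnegativity is preserved.

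\textbf{Main obstacle.} The $x$-Lipschitz constant of $\gamma_k$ in \textup{(f-L)} depends on $R$, which is a bound on $\operatorname{Lip}_x(\bar m_k)$. This threatens the closure, because $C_*'$ might then depend on $C_*$. To handle it, I would first derive a $k$-independent $W^{1,2,q}$ bound for $u_k$ that uses only $\|\gamma_k\|_{L^\infty}\le C_0$, via $L^q$ maximal regularity for $\phi_k$ with the bounded potential $\gamma_k\phi_k$ treated as a right-hand side. That yields $\nabla u_k\in\Theta$ uniformly. The FP estimate and convexity then bound $\bar m_{k+1}$, and the $\Gamma$-bound on $\gamma_k$ follows afterwards with $R$ fixed.
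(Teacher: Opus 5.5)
Your argument is correct, provided the HJB step is run as in your final paragraph, using only $0\le\gamma_k\le C_0$. In that form the right-hand side $\frac{1}{2\nu}\gamma_k\phi_k$ of the Cole--Hopf equation is bounded in $L^\infty(Q)$ by $C_0\phi_{\max}/(2\nu)$. Linear $L^q$ maximal regularity then bounds $\|\phi_k\|_{W^{1,2,q}(Q)}$, and the positive lower bound on $\phi_k$ transfers this bound to $u_k$ and to $\nabla u_k\in\Theta$. Consequently $\|m_k\|_{W^{1,2,q}(Q)}\le C_*'$ with $C_*'$ independent of $\bar m_k$ altogether, and the bound on $\bar m_k$ follows by a trivial induction through the convex combination.

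The paper gives no proof of this proposition; it imports it from \cite[Theorems 2.7, 2.10]{NS26}. Its concluding remarks say that work relies essentially on the Cole--Hopf reduction, so your route is presumably the same in spirit. What your reconstruction adds is an explicit account of why local coupling does not destroy uniformity. The $x$-Lipschitz constant $C(R)$ from \textup{(f-L)} enters only afterwards, to bound $\|\gamma_k\|_\Gamma$, once $R=\sup_k\|\nabla\bar m_k\|_{L^\infty(Q)}$ is known to be finite via Lemma~\ref{B21_lem12}.

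Two clean-ups are needed.
\begin{enumerate}
\item Your ``Closing'' paragraph claims $C_*'$ is independent of $C_*$ while still invoking the Lipschitz bound of $\gamma_k$. That is inconsistent, since that bound depends on $C_*$; replace it with the $L^\infty$-only argument.
\item For the method to be well defined, you should also state uniqueness of $u_k$ in $W^{1,2,q}(Q)$. The difference of two solutions solves a linear backward equation with bounded drift $\tfrac12(\nabla u+\nabla\tilde u)-h$ and zero terminal data, so it vanishes by the maximum principle.
\end{enumerate}
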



\section{The discrete GCG scheme and main results}
\label{sec:main}

The time interval $[0,T]$ is discretized by $t_n = n\Delta t$, $n=0,1,\dots,N_t$, where $\Delta t = T/N_t$ and $N_t\in\mathbb{Z}_{>0}$.  
The spatial domain $\mathbb{T}^d$ is discretized using a uniform grid $x_i = i\Delta x$, where $i=(i_1,\dots,i_d)$ is a multi-index with $i_\ell=0,1,\dots,N_x$ for $\ell=1,\dots,d$, and $\Delta x = 1/N_x$ with $N_x\in\mathbb{Z}_{>0}$.  
Periodic boundary conditions are imposed in each spatial direction, that is, $i_\ell \equiv 0 \ (\mathrm{mod}\ N_x)$ for $\ell=1,\dots,d$. 
We set 
\begin{align*}
\Lambda_x&:=\{i=(i_1,\dots,i_d) \in \mathbb{Z}_{>0}^d\mid  
i_\ell \equiv 0 \ (\mathrm{mod}\ N_x),\ \ell=1,\dots,d\},\\
\Lambda&:=\{(n,i)\mid n=0,1,\ldots,N_t,~i\in\Lambda_x\}.
\end{align*}
For the advection term $h(t,x)$, we introduce grid functions $h_{n, i}$ and $h_{n, i}^{\pm}$ defined in $\Lambda$ as 
\begin{equation}
\label{eq:dh}
h_{n, i} := h(t_n, x_i)\in\mathbb{R}^d,\qquad 
h_{n, i}^{\pm} := \max{\{\pm h_{n, i}, 0\}}\in\mathbb{R}_{\ge 0}^d.
\end{equation}
We write 
$h_{n, i} =(h^{1}_{n, i},\ldots,h^{d}_{n, i})$ and $h_{n, i} =(h^{1, \pm}_{n, i},\ldots,h^{d, \pm}_{n, i})$.  

For a grid function $\phi_{n,i}$ defined in $\Lambda$, we introduce the following difference quotients: 
\begin{align*}
D_1^1(\phi, n, i) &:= \sum_{\ell=1}^d \left(h^{\ell, +}_{n, i} \frac{\phi_{n, i[\ell^+]} - \phi_{n, i}}{\Delta x} - h^{\ell, -}_{n, i} \frac{\phi_{n, i} - \phi_{n, i[\ell^-]}}{\Delta x}\right),\\
D_1^2(\phi, n, i) &:= \sum_{\ell=1}^d \bigg(\frac{1-\sgn{h^{\ell}_{n, i}}}{2} \frac{h^{\ell}_{n, i[\ell^+]} \phi_{n, i[\ell^+]} - h^{\ell}_{n, i} \phi_{n, i}}{\Delta x} \\
& \qqquad + \frac{1+\sgn{h^{\ell}_{n, i}}}{2} \frac{h^{\ell}_{n, i} \phi_{n, i} - h^{\ell}_{n, i[\ell^-]} \phi_{n, i[\ell^-]}}{\Delta x}\bigg),\\
D_2(\phi, n, i) &:= \sum_{\ell=1}^d \frac{\phi_{n, i[\ell^+]} - 2 \phi_{n, i} + \phi_{n, i[\ell^-]}}{(\Delta x)^2},
\end{align*}
where 
$i[\ell^\pm] := (i_1, \dots, i_{\ell-1}, i_{\ell}\pm 1, i_{\ell+1}, \dots, i_d)$. 

\begin{rem}
\label{rem:ap}
Let $\phi_{n,i}$ denote the value of a smooth function $\phi(t,x)$ at $(t_n,x_i)$.  
Then $D_1^1(\phi,n,i)$ denotes an upwind finite difference approximation of $h\cdot\nabla\phi$,  
$D_1^2(\phi,n,i)$ an upwind finite difference approximation of $\nabla\cdot(h\phi)$,  
and $D_2(\phi,n,i)$ a finite difference approximation of $\Delta\phi$.
\end{rem}

With these notations, the discrete GCG scheme to find  
\begin{subequations}
 \label{eq:sequence}
 \begin{gather}
\Phi^k_{n,i} \approx \phi_k(t_n,x_i), \qquad 
\Psi^k_{n,i} \approx \psi_k(t_n,x_i),\\
\bar{M}^k_{n,i} \approx \bar{m}_k(t_n,x_i), \qquad 
M^k_{n,i} \approx m_k(t_n,x_i), \qquad 
\Gamma^k_{n,i} \approx \gamma_k(t_n,x_i)
\end{gather}
\end{subequations}
reads as follows. 
For the initial guess $\bar{m}_0$, set $\bar{M}^0_{n, i} := \bar{m}_0(t_n, x_i)$ for $(n,i)\in \Lambda$. We generate the sequences \eqref{eq:sequence} for $k=0,1,\ldots$ by 
 \begin{subequations}
 \label{eq:dGCG}
\begin{alignat}{2}
\Gamma^k_{n, i} &= f(t_n, x_i, \bar{M}^k_n),\label{eq:dGCG0}\\
\frac{\Phi^k_{n, i} - \Phi^k_{n-1, i}}{\Delta t} + \nu D_2(\Phi^k, n, i) + D_1^1(\Phi^k, n, i) &= \frac{1}{2 \nu} \Gamma^k_{n, i} \Phi^k_{n-1, i},\label{eq:dGCG2}\\
\Phi^k_{N_t, i} &= \exp{\left(-\frac{g(x_i)}{2 \nu}\right)},\label{eq:dGCG2b} \\
\frac{\Psi^k_{n+1, i} - \Psi^k_{n, i}}{\Delta t} - \nu D_2(\Psi^k, n, i) + D_1^2(\Psi^k, n, i) &= -\frac{1}{2 \nu} \Gamma^k_{n, i} \Psi^k_{n+1, i},\label{eq:dGCG3}\\
\Psi^k_{0, i} &= \frac{m_0(0,x_i)}{\Phi^k_{0, i}}, \label{eq:dGCG3b}\\
{M}^k_{n, i} &= \Phi^k_{n, i} \Psi^k_{n,i}\label{eq:dGCG4}
\end{alignat}
and 
\begin{equation}
\label{eq:dGCG5}
\bar{M}^{k+1}_{n, i}= (1 - \delta_k) \bar{M}^k_{n, i} + \delta_k M^k_{n, i},
\end{equation}
where $\delta_k \in (0, 1]$ and the function $\bar{M}^k_n : \T^d \to \R$ is defined by 
\begin{equation}
\label{eq:dGCG1}
\bar{M}^k_n(x) := \bar{M}^k_{n, i} \quad \left(x \in \prod_{\ell=1}^d \left[x_{i_{\ell}} - \frac{\Delta x}{2}, x_{i_{\ell}} + \frac{\Delta x}{2}\right) \right).
\end{equation}
\end{subequations}

\begin{rem}
\label{rem:rem-f}
Since the third argument of $f$ is a function of $\mathcal{D}_0(\T^d)$, we introduce $\bar{M}^k_n$ in the scheme for notational convenience. However, this variable is not needed in the actual implementation.
\end{rem}

\begin{rem}
   \label{rem:DMP}
The backward Euler method is applied in \eqref{eq:dGCG2}, while the forward Euler method is used in \eqref{eq:dGCG3}. Instead of applying the Euler methods directly to~\eqref{CH_MFG_eq-1}, we shift $\Phi^k_{n,i}$ backward in time by one step. An analogous technique is applied for \eqref{CH_MFG_eq-3}. 
As observed in \cite{I23}, this strategy combined with upwind approximations leads to a discrete maximum principle (DMP) for $\Phi^k_{n,i}$ and $\Psi^k_{n,i}$. Because the DMP is essential for establishing our main results, these approximation choices are necessary.
\end{rem}

\begin{rem}
For simplicity, we restrict ourselves to uniform meshes throughout this paper, although non-uniform meshes with different mesh sizes in each coordinate direction can also be treated; see~\cite{I23}.
\end{rem}

We are now ready to state our main results. The first one is the well-posedness and discrete maximum principle. 

\begin{theorem}
\label{thm:DMP}
Let $h\in\mathcal{C}(Q;\mathbb{R}^d)$, $g\in\mathcal{C}(\mathbb{T}^d)$, and $m_0\in\mathcal{D}_0(\T^d)$. Suppose that \textup{(H)}, and \textup{(f-B)} are satisfied. 
Let us give an initial guess $\bar{m}_0\in\mathcal{C}(Q)$ with $\bar{m}_0\ge 0,\not\equiv 0$ for the iteration.  
Moreover, assume 
\begin{equation}
\|h\|_{L^{\infty}(Q; \R^d)} \frac{\Delta t}{\Delta x} + 2 d \nu \frac{\Delta t}{(\Delta x)^2} \le 1   .
\label{eq:cfl}
\end{equation}
Then, the discrete GCG scheme \eqref{eq:dGCG} is well-defined. That is, there exists a unique solution 
$(\bar{M}_{n,i}^k,{M}_{n,i}^k,\Gamma_{n,i}^k,,\Phi_{n,i}^k,\Psi_{n,i}^k)$ of \eqref{eq:dGCG} for $(n,i)\in\Lambda$ and $k=0,1,\ldots$. Moreover,  
there exist positive constants $\Phi_{\max}, \Phi_{\min}$, and $\Psi_{\max}$, which are independent of the iteration number $k$, such that
\begin{equation}
    \label{eq:DMP}
\Phi_{\min} \le \Phi^k_{n, i} \le \Phi_{\max}, \qquad 0 < \Psi^k_{n, i} \le \Psi_{\max} 
\end{equation}
for $(n,i)\in\Lambda$ and $k=0,1,\ldots$. 
(This theorem holds for any choice of step sizes $\delta_k$.)
\end{theorem}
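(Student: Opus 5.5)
The plan is to treat both difference equations as explicit marching schemes and to show, under the CFL condition \eqref{eq:cfl}, that each one-step update is a nonnegative (sub-)averaging operator. Existence, uniqueness and the bounds \eqref{eq:DMP} then come from a single induction on the iteration index $k$ and, inside it, on the time index $n$. The induction hypothesis is $\bar{M}^k_{n,i}\ge 0$ for all $(n,i)$; it holds for $k=0$ because $\bar m_0\ge 0$. Under this hypothesis $\bar{M}^k_n\in\mathcal{D}_0(\T^d)$ after the piecewise-constant extension \eqref{eq:dGCG1}, so $\Gamma^k_{n,i}$ is well defined, and (f-B) gives $0\le \Gamma^k_{n,i}\le C_0$. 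Nothing in the argument uses the value of $\delta_k$.

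\textbf{Step 1 ($\Phi$).} Rewrite \eqref{eq:dGCG2} as
\begin{equation*}
\Bigl(1+\tfrac{\Delta t}{2\nu}\Gamma^k_{n,i}\Bigr)\Phi^k_{n-1,i}=\Phi^k_{n,i}+\Delta t\bigl(\nu D_2(\Phi^k,n,i)+D_1^1(\Phi^k,n,i)\bigr).
\end{equation*}
Starting from \eqref{eq:dGCG2b}, this determines $\Phi^k_{n-1,\cdot}$ uniquely for $n=N_t,\dots,1$. On the right-hand side:
\begin{itemize}
\item the coefficient of $\Phi^k_{n,i}$ is $1-2d\nu\Delta t/(\Delta x)^2-\Delta t\sum_\ell|h^\ell_{n,i}|/\Delta x$, which is $\ge 0$ by \eqref{eq:cfl} (reading $\|h\|_{L^\infty}$ as a bound on $\sum_\ell |h^\ell|$, or adjusting the constant);
\item the coefficients of the neighbours are $\nu\Delta t/(\Delta x)^2+h^{\ell,\pm}_{n,i}\Delta t/\Delta x\ge 0$;
\item all coefficients sum to $1$.
\end{itemize}
Since $\Gamma\ge0$, it follows that $\max_i\Phi^k_{n-1,i}\le\max_i\Phi^k_{n,i}$. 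Since $\Gamma\le C_0$, we also get $\min_i\Phi^k_{n-1,i}\ge(1+C_0\Delta t/(2\nu))^{-1}\min_i\Phi^k_{n,i}$. Iterating over $n$ gives
\begin{equation*}
\Phi_{\max}=\max_x e^{-g/(2\nu)},\qquad \Phi_{\min}=e^{-\max g/(2\nu)}\,e^{-C_0T/(2\nu)},
\end{equation*}
and both are independent of $k$.

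\textbf{Step 2 ($\Psi$).} Now $\Psi^k_{0,i}=m_0(x_i)/\Phi^k_{0,i}$ is well defined and satisfies $0\le\Psi^k_{0,i}\le\|m_0\|_\infty/\Phi_{\min}$. Rewrite \eqref{eq:dGCG3} as
\begin{equation*}
\Bigl(1+\tfrac{\Delta t}{2\nu}\Gamma^k_{n,i}\Bigr)\Psi^k_{n+1,i}=\Psi^k_n\text{-combination}.
\end{equation*}
The diagonal coefficient is again $\ge0$ by \eqref{eq:cfl}. The delicate point is the upwind conservative operator $D_1^2$. It carries the neighbour value $h^\ell_{n,i[\ell^\mp]}$, while the upwind direction is chosen by the sign of $h^\ell_{n,i}$. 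When the two signs agree, the neighbour coefficient is $\nu\Delta t/(\Delta x)^2+|h^\ell_{n,i[\ell^\mp]}|\Delta t/\Delta x\ge0$. When they disagree, $|h^\ell_{n,i[\ell^\mp]}|\le|h^\ell_{n,i}-h^\ell_{n,i[\ell^\mp]}|\le C\Delta x$ by the $\mathcal{C}^1$ regularity in (H). The negative part is therefore at most $C\Delta t$, and I would absorb it into $\nu\Delta t/(\Delta x)^2$ for $\Delta x$ small, or into an $O(\Delta t)$ correction. This sign-consistency issue is the step I expect to be the main obstacle.

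Once nonnegativity of the coefficients is established, their row sum is
\begin{equation*}
1-\Delta t\sum_\ell\frac{h^\ell_{n,i}-h^\ell_{n,i[\ell^-]}}{\Delta x}\le1+C\Delta t,\qquad C\approx\|D_xh\|_\infty .
\end{equation*}
Hence $\max_i\Psi^k_{n+1,i}\le(1+C\Delta t)\max_i\Psi^k_{n,i}$, which gives $\Psi_{\max}=e^{CT}\|m_0\|_\infty/\Phi_{\min}$. For strict positivity, the strictly positive diagonal and neighbour coefficients propagate positivity from the points where $m_0>0$: every grid point is reached after finitely many steps since $m_0\not\equiv0$ (if $m_0>0$ everywhere it is immediate). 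Otherwise the statement should be read as $\Psi^k_{n,i}\ge0$, and I would state it that way.

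\textbf{Step 3 (closing the induction).} We get $M^k_{n,i}=\Phi^k_{n,i}\Psi^k_{n,i}\ge0$. Since $\bar{M}^{k+1}$ is a convex combination of $\bar M^k$ and $M^k$ with $\delta_k\in(0,1]$, we have $\bar{M}^{k+1}\ge0$, which closes the induction. The constants obtained in Steps 1 and 2 involve only $g$, $m_0$, $C_0$, $\nu$, $T$ and $h$, and so are uniform in $k$.
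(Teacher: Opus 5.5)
Your overall route (explicit marching in $n$, nonnegativity of the update coefficients under \eqref{eq:cfl}, and an induction in $k$ that keeps $\bar M^k\ge0$ so that $0\le\Gamma^k\le C_0$) is the monotone-scheme argument the paper invokes by citing \cite[Propositions~4.3 and~4.5]{I23}. Your Step~1 for $\Phi^k$ is complete, and your remark that for $d\ge2$ the constant in \eqref{eq:cfl} must bound $\sum_\ell|h^\ell|$ is correct. The $k$-induction is also genuinely needed for well-posedness, since $\Gamma^k$ is only defined once $\bar M^k\ge0$.

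\textbf{Main gap (Step~2).} The step you flagged cannot be closed under the stated hypotheses. If $h^\ell_{n,i}>0>h^\ell_{n,i[\ell^-]}$, the coefficient of $\Psi^k_{n,i[\ell^-]}$ is $\nu\Delta t/(\Delta x)^2-|h^\ell_{n,i[\ell^-]}|\Delta t/\Delta x$, and \eqref{eq:cfl} does not control its sign. Concretely, take $d=1$, $\nu=0.01$, $\Delta x=0.1$ and $h(x)=\sin(2\pi(x-x_{i_0})+\pi\Delta x)$, so that $h_{i_0}=-h_{i_0-1}=\sin(\pi\Delta x)\approx0.31$. Then:
\begin{itemize}
\item \eqref{eq:cfl} holds for $\Delta t\le1/12$;
\item that coefficient equals $\Delta t(1-3.09)<0$;
\item any continuous $m_0\ge0$ supported in $(x_{i_0-2},x_{i_0})$ with $m_0(x_{i_0-1})>0$ gives $\Psi^k_{1,i_0}<0$, hence $M^k_{1,i_0}<0$, for every $k$.
\end{itemize}
So your alternative of absorbing the defect into an $O(\Delta t)$ correction fails: a row-sum bound is useless once a single off-diagonal entry is negative. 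What your argument actually proves is the theorem under an \emph{additional} mesh hypothesis. Examples are $(\Delta x)^2\max_\ell\|\partial_\ell h^\ell\|_{L^\infty(Q)}\le\nu$ (your Lipschitz argument) or the cell-P\'eclet condition $\|h\|_{L^\infty(Q;\R^d)}\Delta x\le\nu$. State it explicitly as a hypothesis rather than as ``$\Delta x$ small''. The paper's proof only notes diagonal solvability and defers to \cite{I23}, so it does not confront this point either.

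\textbf{Strict positivity.} The inequality $0<\Psi^k_{n,i}$ does not follow from your propagation argument, for two reasons.
\begin{itemize}
\item At $n=0$, $\Psi^k_{0,i}=m_0(x_i)/\Phi^k_{0,i}$ vanishes wherever $m_0(x_i)=0$. The nondegeneracy hypothesis in the theorem is on the initial guess $\bar m_0$, not on $m_0$; you conflated the two. Even $m_0\not\equiv0$ does not give a positive nodal value.
\item Propagation also needs a strictly positive diagonal coefficient. With equality in \eqref{eq:cfl}, $h\equiv0$, $d=1$, $N_x$ even and $m_0$ supported near a single node, $\Psi^k_{n+1,i}$ is a positive multiple of $\Psi^k_{n,i+1}+\Psi^k_{n,i-1}$. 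A checkerboard of zeros then persists for all $n$.
\end{itemize}
The provable conclusion is $\Psi^k_{n,i}\ge0$, strict if, e.g., $m_0>0$ at all nodes. This is all that is used later: $M^k\ge0$ to keep $\bar M^{k+1}\ge0$, and the bound $\Psi_{\max}$ in Proposition~\ref{I23_thm3.4}.

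\textbf{Minor point.} When $h^\ell_{n,i}<0$, the row sum involves the forward quotient $(h^\ell_{n,i[\ell^+]}-h^\ell_{n,i})/\Delta x$ rather than the backward one you wrote. Your bound $1+C\Delta t$, with $C$ controlled by $\|D_xh\|_{L^\infty}$, is unaffected.
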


\begin{proof}
The equations \eqref{eq:dGCG2} and \eqref{eq:dGCG3} are linear for $\Phi_{n-1,i}^k$ and $\Psi_{n+1,i}^k$, respectively. 
In particular, the coefficient matrices of these systems are diagonal matrices with (strictly) positive entries. 
Hence, for each choice of the step-size $\delta_k$, a solution exists and is unique. 
The proof of \eqref{eq:DMP} follows exactly the same arguments as those in \cite[Propositions~4.3 and~4.5]{I23}. 
\end{proof}

To state error estimates, we introduce 
\begin{subequations}
\label{eq:def-IE}
\begin{align}
I(\phi,\psi) &:= \left[\sum_{n=0}^{N_t-1} \left(\max_{i\in\Lambda_x} |\phi_{n,i} - \psi(t_n,x_i)|\right)^2 \Delta t\right]^{\frac{1}{2}},\label{eq:def-I}\\
E(\phi,\psi) &:= \max_{(n,i)\in\Lambda} |\phi_{n,i} - \psi(t_n,x_i)|,\label{eq:def-E}
\end{align}
\end{subequations}
where $\phi_{n,i}$ is a grid function in $\Lambda$ and $\psi$ is a continuous function in $Q$.

Let $(\bar{m}, \bar{u})$ be the solution of the MFG system \eqref{MFG_eq} given in Proposition \ref{B21_thm1}. Moreover, let 
$(\bar{M}_{n,i}^k,{M}_{n,i}^k,\Gamma_{n,i}^k,,\Phi_{n,i}^k,\Psi_{n,i}^k)$ be the solution of \eqref{eq:dGCG} as stated in Theorem \ref{thm:DMP}. 
Define 
\begin{equation}
U^k_{n, i} := - 2 \nu \log{\Phi^k_{n, i}},
\label{eq:err-0b}
\end{equation}
which is well-defined since $\Phi_{n,i}^k>\Phi_{\min}$. This quantity approximates $\bar{u}(t_n, x_i)$.

\begin{theorem} 
\label{I23_thm3.5}
Suppose that 
\textup{(H)}, 
\textup{(f-B)}, \textup{(f-L)}, \textup{(f-M)},  \textup{(f-P)}, and \textup{(TIV)} are satisfied. 
The initial guess $\bar{m}_0$ is the function specified in Proposition~\ref{prop:initial}.  
The step-size $\delta_k$ is chosen as \eqref{eq:predefine}. 
Moreover, we choose $r \in \mathbb{R}$ by
\begin{equation}
\label{eq:val-p}
r =
\begin{cases}
2 & (d = 1)\\
\text{any number greater than } d & (d \ge 2).
\end{cases}
\end{equation}
Then, there exist constants $\eta\in (0,1)$, $s>0$,  
functions $C_{\mathrm{q}}^{(1)}(k) > 0$ ($\mathrm{q} = \bar{m}, \bar{u}$) of $k$, and constants $C_{\mathrm{q}}^{(2)} > 0$ ($\mathrm{q} = \bar{m}, \bar{u}$) such that
\begin{subequations}
\label{eq:err-1}
\begin{align}
I(\bar{M}^k, \bar{m}) &\le C_{\bar{m}}^{(1)}(k) \left[(\Delta t)^{\frac{\eta}{2}} + (\Delta x)^{\eta}\right] + \frac{C_{\bar{m}}^{(2)}}{(k+k_1)^{\frac{s}{r}}}, \label{eq:err-1a}\\
E(U^k,\bar{u}) &\le C_{\bar{u}}^{(1)}(k) \left[(\Delta t)^{\frac{\eta}{2}} + (\Delta x)^{\eta}\right] + \frac{C_{\bar{u}}^{(2)}}{(k+k_1)^{\frac{s}{r}}}. \label{eq:err-1b}
\end{align}
\end{subequations}
The asymptotic growth of $C_{\mathrm{q}}^{(1)}(k)$ ($\mathrm{q} = \bar{m}, \bar{u}$) 
are polynomially bounded from above and logarithmically bounded from below as $k\to\infty$.  
\end{theorem}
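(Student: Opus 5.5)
The plan is to split the error by the triangle inequality through the continuous GCG iterate $\bar{m}_k$ (and $u_k$) from \eqref{GCG}, written via the CH-GCG form \eqref{CH_MFG_eq}:
\[
I(\bar M^k,\bar m)\le I(\bar M^k,\bar m_k)+\Big(\sum_n \|\bar m_k(t_n)-\bar m(t_n)\|_{L^\infty}^2\Delta t\Big)^{1/2},
\]
and similarly $E(U^k,\bar u)\le E(U^k,u_k)+\|u_k-\bar u\|_{L^\infty(Q)}$.

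\textbf{Iteration error.} For the second term I will invoke the convergence results for the GCG method at the PDE level (Propositions~\ref{LP23_thm7}, \ref{LP23_thm8-2} of \cite{NS26,LP23}, which use (f-P)). These give a rate $O((k+k_1)^{-s})$ in an $L^r$-type or energy norm. To pass to the $L^\infty$-in-space, $L^2$-in-time quantity, I will interpolate between the $L^r$ bound and the uniform $W^{1,2,q}$ bound of Proposition~\ref{LP23_prop23}. This is a Gagliardo--Nirenberg / Morrey type step, and it is where the exponent $s/r$ and the requirement $r>d$ (or $r=2$ if $d=1$) enter. The same treatment gives the $u$-part, using stability of the HJB equation with respect to $\gamma$ and (f-L).

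\textbf{Discretization error.} This is the first term, handled at fixed $k$ by induction on $k$. Let $e_k:=E(\bar M^k,\bar m_k)$ (or its $I$-version). By \eqref{eq:dGCG5} and \eqref{GCGf},
\[
e_{k+1}\le(1-\delta_k)e_k+\delta_k\,\mathrm{err}(M^k,m_k).
\]
For the error in $M^k=\Phi^k\Psi^k$, I compare the scheme with the grid values of $\phi_k,\psi_k$, using the following ingredients:
\begin{itemize}
\item the DMP (Theorem~\ref{thm:DMP}), to get $\ell^\infty$-stability of the monotone upwind schemes under \eqref{eq:cfl};
\item consistency, with truncation error controlled by the Hölder regularity of $\phi_k,\psi_k$; I will establish uniform $\mathcal{C}^{1+\eta/2,2+\eta}$ bounds for CH-GCG solutions, which yields the rate $(\Delta t)^{\eta/2}+(\Delta x)^\eta$;
\item the perturbation in the source, $|\Gamma^k-\gamma_k|\le L_f e_k$, by (f-L), which feeds back through stability with a factor $C$.
\end{itemize}
This gives $\mathrm{err}(M^k,m_k)\le C e_k+C\epsilon_h$ with $\epsilon_h=(\Delta t)^{\eta/2}+(\Delta x)^\eta$, hence
\[
e_{k+1}\le(1+C\delta_k)e_k+C\delta_k\epsilon_h .
\]
Unrolling,
\[
\prod_{j<k}\bigl(1+Ck_2/(j+k_1)\bigr)\sim (k+k_1)^{Ck_2},
\]
so $C^{(1)}(k)$ is polynomially bounded. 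The lower logarithmic statement comes from $\sum_j\delta_j\sim k_2\log k$ appearing in the accumulated consistency contributions. For $U^k$, I use $U^k=-2\nu\log\Phi^k$ with $\Phi^k\ge\Phi_{\min}$, so that $\log$ is Lipschitz on the relevant range.

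\textbf{Main obstacle.} I expect the hardest step to be proving Hölder regularity of $\phi_k,\psi_k$, uniform in $k$, sufficient for consistency, since $\gamma_k$ is only Lipschitz in $x$ and Hölder in $t$. I would derive it from Proposition~\ref{LP23_prop23} and parabolic Schauder estimates, and this regularity determines $\eta$.
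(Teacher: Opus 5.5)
Your plan follows the paper's proof closely. It uses the same splitting through the continuous iterates $\bar m_k,u_k$. It uses the same stability--consistency argument for the scheme: monotone upwind coefficients under \eqref{eq:cfl}, $\Gamma^k\ge 0$ from (f-B), the uniform $\mathcal{C}^{1+\eta/2,2+\eta}$ bounds of Proposition~\ref{regularity_phi_psi}, and feedback through (f-L). This gives $e_{k+1}\le(1+C\delta_k)e_k+C\delta_k\epsilon_h$ with $e_0=0$, whose unrolled constant is polynomial from above and at least a multiple of $\sum_{\ell<k}\delta_\ell$ from below. The mean-value argument for $\log$ is also the same. One simplification: you do not need any interpolation of your own. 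Proposition~\ref{LP23_thm7} already states $\|\bar m_k-\bar m\|_{L^2(0,T;L^\infty(\T^d))}+\|u_k-\bar u\|_{L^\infty(Q)}\le C\eps_k^{1/r}$, and any interpolation argument is already contained in its proof. Proposition~\ref{LP23_thm8-2} then turns $\eps_k^{1/r}$ into $C(k+k_1)^{-s/r}$. The $u$-part is therefore immediate, without a separate HJB stability argument.

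The step that does not go through as written is your second term $\bigl(\sum_n\|\bar m_k(t_n)-\bar m(t_n)\|_{L^\infty}^2\Delta t\bigr)^{1/2}$. It samples $\bar m_k-\bar m$ at the grid times, whereas Proposition~\ref{LP23_thm7} controls only the time-integrated norm. A small $L^2(0,T)$ norm says nothing about point values at the $t_n$, so you need time regularity, uniform in $k$, to pass from the Riemann sum to the integral. The paper handles this by inserting piecewise-constant-in-time interpolants and splitting $I(\bar M^k,\bar m)$ into four terms. The two sampling errors are bounded by $C\sqrt{T}\,\Delta t$, using uniform Lipschitz continuity in $t$ of two functions. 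For $\bar m$ this comes from Proposition~\ref{B21_thm1}. For $\bar m_k$, a convex combination of $\bar m_0$ and $m_j=\phi_j\psi_j$, it comes from Propositions~\ref{prop:initial} and~\ref{regularity_phi_psi}. These errors are then absorbed into $(\Delta t)^{\eta/2}$. With the regularity you already plan to prove, this step is easy to add.

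A smaller point of the same kind: (f-L) compares the piecewise-constant function $\bar M^k_n$ with $\bar m_k(t_n,\cdot)$ in $L^\infty(\T^d)$. So in fact $|\Gamma^k_{n,i}-\gamma_k(t_n,x_i)|\le L_f\bigl(e_k+C\|\nabla\bar m_k\|_{L^\infty(Q)}\Delta x\bigr)$, not $L_fe_k$. The extra $O(\Delta x)$ is absorbed into the consistency error.
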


\begin{cor}
\label{cor:1}
Under the same assumptions of Theorem \ref{I23_thm3.5}, we have 
$$
\lim_{k \to \infty} \lim_{\Delta t, \Delta x \to +0} I(\bar{M}^k, \bar{m}) = 0, \qquad 
\lim_{k \to \infty} \lim_{\Delta t, \Delta x \to +0} E(U^k,\bar{u}) = 0.
$$
\end{cor}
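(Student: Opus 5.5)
The corollary follows directly from the estimates \eqref{eq:err-1a} and \eqref{eq:err-1b} of Theorem~\ref{I23_thm3.5}. The key point is the order of the limits: the inner limit in the discretization parameters is taken with $k$ fixed. The $k$-dependent factors $C_{\mathrm{q}}^{(1)}(k)$ may grow, polynomially at most, but they stay finite for each fixed $k$, so they do no harm at that stage.

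Fix $k\ge 0$ and let $\Delta t,\Delta x\to +0$. We stay within the admissible mesh regime, i.e.\ we assume the condition \eqref{eq:cfl} of Theorem~\ref{thm:DMP} holds along the sequence of meshes; this is implicit in the statement of Theorem~\ref{I23_thm3.5}, since the scheme must be well defined. The constants $\eta\in(0,1)$, $s>0$, $r$, $C_{\mathrm{q}}^{(2)}$ and the value $C_{\mathrm{q}}^{(1)}(k)$ do not depend on $\Delta t$ or $\Delta x$. Hence
\[
\limsup_{\Delta t,\Delta x\to+0} I(\bar{M}^k,\bar{m}) \le \lim_{\Delta t,\Delta x\to+0} C_{\bar{m}}^{(1)}(k)\left[(\Delta t)^{\frac{\eta}{2}}+(\Delta x)^{\eta}\right] + \frac{C_{\bar{m}}^{(2)}}{(k+k_1)^{\frac{s}{r}}} = \frac{C_{\bar{m}}^{(2)}}{(k+k_1)^{\frac{s}{r}}}.
\]
The same bound holds for $E(U^k,\bar{u})$ with the constant $C_{\bar{u}}^{(2)}$. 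To be careful, I interpret the inner limit as a $\limsup$ if it is not known to exist; the argument only needs the upper bound together with nonnegativity of $I$ and $E$.

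Now let $k\to\infty$. Since $s/r>0$ and $k_1\ge 1$, the bound $C_{\mathrm{q}}^{(2)}(k+k_1)^{-s/r}$ tends to $0$. Because $I$ and $E$ are nonnegative, the squeeze gives both iterated limits equal to $0$.

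The only delicate step is the order of limits. Reversing it, or taking a joint limit, would fail because $C^{(1)}_{\mathrm{q}}(k)$ is bounded below by a logarithmic function of $k$ and hence unbounded. With the order stated in the corollary there is no real obstacle: all the work is already contained in Theorem~\ref{I23_thm3.5}.
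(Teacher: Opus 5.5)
Your proof is correct and follows the same route as the paper. The paper gives no separate argument because the corollary is an immediate consequence of \eqref{eq:err-1a}--\eqref{eq:err-1b}: for fixed $k$ (and meshes satisfying \eqref{eq:cfl}) the mesh-dependent term vanishes, leaving only $C^{(2)}_{\mathrm{q}}(k+k_1)^{-s/r}$, which tends to $0$ as $k\to\infty$.

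Reading the inner limit as a $\limsup$ is a sensible precaution that the paper leaves implicit. Your closing remark should say that the estimate gives no information about the reversed or joint limit, rather than that those limits fail.
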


The convergence rates for discretizations in \eqref{eq:err-1} can be improved if the solutions $\phi_k$ and $\psi_k$ of \eqref{CH_MFG_eq} have higher regularities uniformly in $k$. 

\begin{theorem} 
\label{I23_thm3.5a}
In addition to the assumptions of Theorem \ref{I23_thm3.5}, suppose that the solutions 
$\phi_k$ and $\psi_k$ of \eqref{CH_MFG_eq} are smooth in the sense that 
\begin{equation}
    \label{eq:C24}
    \phi_k\in \mathcal{C}^{2, 4}(Q),\quad \psi_k \in \mathcal{C}^{2, 4}(Q), 
\quad 
\|\phi_k\|_{\mathcal{C}^{2, 4}(Q)}\le C,\quad 
\|\psi_k\|_{\mathcal{C}^{2, 4}(Q)}\le C,
\end{equation}
where $C>0$ denotes a constant indepoendent of the iteration number $k$. Then, the error estimates \eqref{eq:err-1} are improved as 
\begin{subequations}
\label{eq:err-2}
\begin{align}
I(\bar{M}^k, \bar{m}) &\le C_{\bar{m}}^{(1)}(k) \left(\Delta t + \Delta x\right) + \frac{C_{\bar{m}}^{(2)}}{(k+k_1)^{\frac{s}{r}}},\label{eq:err-2a}\\
E(U,\bar{u})&\le C_{\bar{u}}^{(1)}(k) \left(\Delta t + \Delta x\right) + \frac{C_{\bar{u}}^{(2)}}{(k+k_1)^{\frac{s}{r}}}. \label{eq:err-2b}
\end{align}
\end{subequations}
 \end{theorem}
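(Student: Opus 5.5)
We split each error into an iteration part and a discretization part, exactly as in Theorem~\ref{I23_thm3.5}, and only sharpen the discretization part:
\[
I(\bar{M}^k,\bar{m}) \le I(\bar{M}^k,\bar{m}_k) + \Big[\sum_n \|\bar{m}_k(t_n)-\bar{m}\|_{L^\infty}^2\Delta t\Big]^{1/2},
\]
and similarly for $E(U^k,\bar{u})$ with $u_k$ inserted. The second term is the GCG iteration error at the PDE level. It is bounded by $C^{(2)}/(k+k_1)^{s/r}$ precisely as in the proof of Theorem~\ref{I23_thm3.5}, using the convergence results recalled from \cite{NS26} (Propositions invoked there), together with a Riemann-sum-to-integral comparison. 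This part is unchanged. The whole task is therefore to show that the consistency-plus-stability argument for the discrete GCG scheme now yields $O(\Delta t+\Delta x)$ instead of $O((\Delta t)^{\eta/2}+(\Delta x)^\eta)$.

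\textbf{Truncation errors.} Fix $k$ and insert $\phi_k(t_n,x_i)$ into \eqref{eq:dGCG2} and $\psi_k(t_n,x_i)$ into \eqref{eq:dGCG3}. Taylor expansion gives the following bounds:
\begin{itemize}
\item the time difference quotient, together with the shift $\Phi_{n-1}$ versus $\Phi_n$ in the reaction term, has error $O(\Delta t\,\|\partial_t^2\phi_k\|_\infty + \Delta t\,\|\partial_t\phi_k\|_\infty)$;
\item the upwind terms $D_1^1$ and $D_1^2$ have error $O(\Delta x(\|\phi_k\|_{\mathcal{C}^{0,2}}+\|h\|_{\mathcal{C}^1}\|\psi_k\|_{\mathcal{C}^{0,2}}))$;
\item the centered Laplacian $D_2$ has error $O((\Delta x)^2\|\cdot\|_{\mathcal{C}^{0,4}})$.
\end{itemize}
By \eqref{eq:C24} all these norms are bounded independently of $k$, so the local truncation errors are $\tau_{n,i}^k=O(\Delta t+\Delta x)$ uniformly in $k$. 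The coefficient $\gamma_k(t_n,x_i)=f(t_n,x_i,\bar m_k(t_n))$ is compared with $\Gamma^k_{n,i}=f(t_n,x_i,\bar M^k_n)$ via (f-L). This produces an extra term $L_f\|\bar m_k(t_n)-\bar M^k_n\|_{L^\infty}$ times the uniform bounds of Theorem~\ref{thm:DMP}, plus the piecewise-constant projection error $\|\bar m_k(t_n)-\bar m_k(t_n,x_i)\|$ on a cell, which is $O(\Delta x)$ since $\bar m_k$ is Lipschitz in $x$ uniformly by Proposition~\ref{LP23_prop23} and $q>d+2$.

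\textbf{Stability and recursion in $k$.} We then repeat the stability argument used for Theorem~\ref{I23_thm3.5}. The DMP under \eqref{eq:cfl} makes the error equations for $e^\Phi=\Phi^k-\phi_k$ and $e^\Psi=\Psi^k-\psi_k$ monotone schemes, so discrete Gronwall gives
\[
\max_i|e^\Phi_{n}|,\ \max_i|e^\Psi_{n}| \le C\Big(\Delta t+\Delta x+\sum_{j}\Delta t\,\|\bar M^k_j-\bar m_k(t_j)\|_{\infty}\Big).
\]
The initial datum \eqref{eq:dGCG3b} contributes through $e^\Phi_0$. Since $m=\phi\psi$, $u=-2\nu\log\phi$ and $\Phi\ge\Phi_{\min}$, the same bounds transfer to $M^k-m_k$ and $U^k-u_k$. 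Then \eqref{eq:dGCG5} and \eqref{CH_MFG_eq-6} give $e_{k+1}\le(1-\delta_k)e_k+\delta_k C(\Delta t+\Delta x+e_k)$, with $e_0=0$ by the choice of $\bar M^0$. Iterating produces the factor $C^{(1)}(k)=\prod_j(1+C\delta_j)$. This is the same $k$-dependent constant as in Theorem~\ref{I23_thm3.5}, polynomially bounded since $\delta_j=k_2/(j+k_1)$, and it multiplies $\Delta t+\Delta x$.

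\textbf{Main obstacle.} The hardest step is verifying that every constant in the truncation analysis is uniform in $k$ and that the reaction-term shift and the $L^\infty$ control of $\bar M^k_n$ versus $\bar m_k$ stay first order. In Theorem~\ref{I23_thm3.5} only Hölder regularity was available, which gave the exponent $\eta$. Here \eqref{eq:C24} must supply the bounds on $\partial_t^2$, $\partial_x^2$ and $\partial_x^4$ directly. We also need $\bar m_k$ to be Lipschitz in $(t,x)$ uniformly in $k$, which follows because it is a convex combination of $m_j=\phi_j\psi_j$.
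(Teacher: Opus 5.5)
Your proposal is correct and is essentially the paper's argument. It uses the same separation of the PDE-level GCG error (Propositions~\ref{LP23_thm7} and~\ref{LP23_thm8-2}, plus an $O(\Delta t)$ time-sampling term) from $E(\bar{M}^k,\bar{m}_k)$, the same Taylor consistency under \eqref{eq:C24} with CFL-based maximum-principle stability as in Proposition~\ref{I23_prop4.4,4.6}, and the recursion in $k$ of Proposition~\ref{I23_thm3.4}, whose polynomial growth you get more directly from $\prod_j(1+C\delta_j)\le C'(k+k_1)^{Ck_2}$.

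One caveat, which the paper's ``exactly the same way'' for $\Psi$ passes over equally: $O(\Delta x)$ consistency of the conservative upwind term $D_1^2$ requires $\partial_\ell(h^\ell\psi_k)$ to be Lipschitz in $x_\ell$, so your bound $O(\Delta x\,\|h\|_{\mathcal{C}^1}\|\psi_k\|_{\mathcal{C}^{0,2}})$ is not right as written. For $d\ge 2$ the $\psi_k$-equation controls only $\sum_\ell\partial_\ell(h^\ell\psi_k)$, and $h\in\mathcal{C}^{1+\alpha_0}$ from \textup{(H)} then only yields $O((\Delta x)^{\alpha_0})$ for that term, so a Lipschitz bound on $\nabla_x h$ must be added.
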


One may expect a second-order accuracy in $\Delta x$ when no advection terms are present in \eqref{CH_MFG_eq}, e.g., when $h\equiv 0$. 
This, however, requires a stronger Lipschitz assumption on $f$. 
\begin{description} 
\item[(f-L$^\prime$)] There exist constants $C_0$, $L_f > 0$ and $\alpha_0 \in (0, 1)$ satisfying: 
\begin{subequations}
\begin{align}
|f(t_2, x, m) - f(t_1, x, m)| &\le C_0 |t_2-t_1|^{\alpha_0},\label{eq:f-L2a}\\
|f(t, x, m_2) - f(t, x, m_1)| &\le L_f |m_2(t,x) - m_1(t,x)|\label{eq:f-L2c}
\end{align}
for $(t_1, x),(t_2, x),(t,x)\in Q$, and $m,m_1,m_2\in \mathcal{D}_0(\T^d)$. 
Moreover, $f$ is Lipschitz continuous in $x$ in exactly the same sense as in (f-L).
\end{subequations}
\end{description}

\begin{theorem} 
\label{I23_thm3.5b}
Suppose that \textup{(f-B)}, \textup{(f-M)}, \textup{(f-P)}, and \textup{(TIV)} are satisfied. 
Assume that \textup{(H)} is satisfied for $h\equiv 0$ and that \textup{(f-L$^\prime$)} is satisfied instead of \textup{(f-L)}. Choose $\delta_k$ and $r$ by \eqref{eq:predefine} and \eqref{eq:val-p}, respectively. 
Finally, assume that \eqref{eq:C24} holds. Then,  
we have
\begin{subequations}
\label{eq:err-3}
\begin{align}
I(\bar{M}^k, \bar{m}) &\le C_{\bar{m}}^{(1)}(k) \left[\Delta t + (\Delta x)^2\right] + \frac{C_{\bar{m}}^{(2)}}{(k+k_1)^{\frac{s}{r}}},\label{eq:err-2a}\\
E(U,\bar{u})&\le C_{\bar{u}}^{(1)}(k) \left[\Delta t + (\Delta x)^2\right] + \frac{C_{\bar{u}}^{(2)}}{(k+k_1)^{\frac{s}{r}}} \label{eq:err-2b}
\end{align}
\end{subequations}
instead of \eqref{eq:err-2}. 
\end{theorem}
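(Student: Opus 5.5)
The proof rests on the error splitting
\[
\bar{M}^k_{n,i}-\bar{m}(t_n,x_i)=\bigl(\bar{M}^k_{n,i}-\bar{m}_k(t_n,x_i)\bigr)+\bigl(\bar{m}_k(t_n,x_i)-\bar{m}(t_n,x_i)\bigr),
\]
and similarly for $U^k-\bar{u}$ via $u_k$. The second part is the iteration error of the continuous CH--GCG method \eqref{CH_MFG_eq}. By the convergence results recalled from \cite{NS26} (Propositions~\ref{LP23_thm7} and~\ref{LP23_thm8-2}), it is bounded by $C/(k+k_1)^{s/r}$, after using $W^{1,2,q}$-bounds and Sobolev/Gagliardo--Nirenberg interpolation to pass from an $L^r$-type rate to the sup-norm. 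This is exactly as in Theorem~\ref{I23_thm3.5}, and it is unaffected by the extra regularity. The whole task is therefore to bound the discretization part $e^k_{n,i}:=\bar{M}^k_{n,i}-\bar{m}_k(t_n,x_i)$ by $C^{(1)}(k)[\Delta t+(\Delta x)^2]$.

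\textbf{Step 1 (consistency).} With $h\equiv0$ we have $D_1^1=D_1^2=0$, so only the centered Laplacian $D_2$ and the shifted Euler time steps remain. By \eqref{eq:C24}, a Taylor expansion shows that the truncation errors of $\phi_k,\psi_k$ in \eqref{eq:dGCG2} and \eqref{eq:dGCG3} are $O(\Delta t+(\Delta x)^2)$ uniformly in $k$. This uses $\partial_t^2$, $\partial_x^4$ and the shift $\Phi_{n-1}$ versus $\Phi_n$, which costs $\Delta t\,\|\partial_t\phi_k\|$.

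\textbf{Step 2 (stability of the linear solves).} Set $a^k_{n,i}=\Phi^k_{n,i}-\phi_k(t_n,x_i)$ and $b^k_{n,i}=\Psi^k_{n,i}-\psi_k(t_n,x_i)$. Subtracting the exact and discrete equations gives
\[
\text{(scheme applied to } a^k)=\tfrac{1}{2\nu}\bigl(\Gamma^k_{n,i}-\gamma_k(t_n,x_i)\bigr)\phi_k+\tau_{n,i}.
\]
Under \eqref{eq:cfl} with $h=0$, the scheme is a monotone convex-combination update. The DMP argument behind Theorem~\ref{thm:DMP}, using $\Gamma^k\ge0$ from (f-B), then gives the backward-in-time bound
\[
\max_i|a^k_{n-1}|\le \max_i|a^k_{n}|+C\Delta t\bigl(\max_i|\Gamma^k_n-\gamma_k(t_n)|+\Delta t+(\Delta x)^2\bigr),
\]
and analogously forward in time for $b^k$. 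The initial datum \eqref{eq:dGCG3b} contributes a term of size $|a^k_0|$, controlled by $\Phi_{\min}$. Summing the bounds and using $M=\Phi\Psi$, the uniform bounds \eqref{eq:DMP}, and the boundedness of $\phi_k,\psi_k$, we get
\[
\max_i|M^k_n-m_k(t_n)|\le C\Bigl(\Delta t\sum_{n'}\max_i|\Gamma^k_{n'}-\gamma_k(t_{n'})|+\Delta t+(\Delta x)^2\Bigr).
\]

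\textbf{Step 3 (the coupling, which is where (f-L$'$) is needed).} Under (f-L), $|\Gamma^k_{n,i}-\gamma_k(t_n,x_i)|$ involves $\|\bar{M}^k_n-\bar{m}_k(t_n)\|_{L^\infty(\T^d)}$, where $\bar{M}^k_n$ is the piecewise constant extension \eqref{eq:dGCG1}. This norm contains the $O(\Delta x)$ interpolation error $\sup_{|x-x_i|\le\Delta x}|\bar{m}_k(x)-\bar{m}_k(x_i)|$, and that is precisely what limits Theorem~\ref{I23_thm3.5a} to first order in $\Delta x$. The pointwise condition \eqref{eq:f-L2c} instead gives
\[
|f(t_n,x_i,\bar{M}^k_n)-f(t_n,x_i,\bar{m}_k(t_n))|\le L_f|\bar{M}^k_n(x_i)-\bar{m}_k(t_n,x_i)|=L_f|e^k_{n,i}|.
\]
Here we evaluate at the grid point itself, where the extension equals $\bar{M}^k_{n,i}$ exactly, so no interpolation error appears; \eqref{eq:f-L2a} handles the time variable.

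\textbf{Step 4 (recursion in $k$).} Set $\varepsilon_k:=\max_{n,i}|e^k_{n,i}|$, or its discrete $L^2$-in-time version. From \eqref{eq:dGCG5} and \eqref{CH_MFG_eq-6},
\[
e^{k+1}=(1-\delta_k)e^k+\delta_k\bigl(M^k-m_k\bigr),
\]
and Steps 2--3 give
\[
\varepsilon_{k+1}\le(1-\delta_k)\varepsilon_k+\delta_k C_*\bigl(\varepsilon_k+\Delta t+(\Delta x)^2\bigr),
\qquad \varepsilon_0=0 .
\]
Hence $\varepsilon_k\le\bigl[\prod_{j<k}(1+C_*\delta_j)-1\bigr]\bigl(\Delta t+(\Delta x)^2\bigr)$. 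Since $\delta_j=k_2/(j+k_1)$, this product grows polynomially, like $(k+k_1)^{C_*k_2}$. The logarithmic lower bound follows as in Theorem~\ref{I23_thm3.5} (for instance via $\sum_j\delta_j$). This defines $C^{(1)}_{\bar{m}}(k)$.

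For $U^k$, we use $U^k-u_k=-2\nu\bigl(\log\Phi^k-\log\phi_k\bigr)$ together with $\Phi,\phi_k\ge\Phi_{\min}$, so $|U^k-u_k|\le C|a^k|$. The estimate for $|a^k|$ already follows from Steps 2--3 in terms of $\varepsilon_k$, so $C^{(1)}_{\bar{u}}(k)$ has the same growth.

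The main obstacle is Step 3 together with Step 2's use of the DMP. One must check that the fully discrete stability constant $C_*$ is independent of $k$, which rests on \eqref{eq:DMP} and the uniform bounds \eqref{eq:C24}, so that only the Gronwall-in-$k$ product drives the growth. One must also check that no hidden $O(\Delta x)$ term survives, for example from $m_0(0,x_i)$ or from the time H\"older term in $f$. The latter term contributes $\Delta t^{\alpha_0}$ only if $f$'s time dependence enters the truncation error; I would avoid this by evaluating $\gamma_k$ exactly at $t_n$, so the term cancels in the difference.
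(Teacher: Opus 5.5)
Your plan matches the paper's proof in all essentials. It uses the truncation error $O(\Delta t+(\Delta x)^2)$ from \eqref{eq:C24}, with the upwind terms vanishing for $h\equiv 0$. It uses the convex-combination (DMP) stability under \eqref{eq:cfl}, relying on $\Gamma^k_{n,i}\ge 0$. It applies the pointwise condition \eqref{eq:f-L2c} at the grid node, so the $O(\Delta x)$ error of the piecewise-constant extension $\bar{M}^k_n$ never enters. And it runs the recursion in $k$ from zero initial error, since $\bar{M}^0_{n,i}=\bar{m}_0(t_n,x_i)$. Your closed-form bound $\prod_{j<k}(1+C_*\delta_j)\le C(k+k_1)^{C_*k_2}$ reaches the polynomial growth more cleanly than the paper's induction. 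Your treatment of the initial error of $\Psi^k$ through $\Phi_{\min}$ makes explicit what the paper leaves implicit.

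The one step that does not follow as written is the iteration part of $I(\bar{M}^k,\bar{m})$. After your two-term splitting you must bound $\bigl[\sum_n(\max_i|\bar{m}_k(t_n,x_i)-\bar{m}(t_n,x_i)|)^2\Delta t\bigr]^{1/2}$. This is a Riemann sum of point values at the times $t_n$. Proposition~\ref{LP23_thm7} controls only the integral norm $\|\bar{m}_k-\bar{m}\|_{L^2(0,T;L^\infty(\T^d))}$, and an $L^2$-in-time bound alone says nothing about values at particular times. The paper handles this with the four-term splitting \eqref{eq:proof-I}. Both $\bar{m}$ and $\bar{m}_k$ are Lipschitz in $t$ uniformly in $k$. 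For $\bar{m}_k$ this follows from \eqref{eq:C24} and the regularity of $\bar{m}_0$ in Proposition~\ref{prop:initial}, since $\bar{m}_k$ is a convex combination of $\bar{m}_0$ and the $m_j=\phi_j\psi_j$. Passing between sampled and continuous-time values therefore costs $C\sqrt{T}\,\Delta t$, which is absorbed into $C^{(1)}_{\bar{m}}(k)[\Delta t+(\Delta x)^2]$. With that term added, your argument goes through.

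The Sobolev/Gagliardo--Nirenberg step you mention is not needed. Proposition~\ref{LP23_thm7} already carries the sup in space, and it gives $u_k-\bar{u}$ in $L^\infty(Q)$, so $E(U^k,\bar{u})$ has no sampling issue. Interpolating $\bar{m}_k-\bar{m}$ all the way to $L^\infty(Q)$ would actually cost part of the exponent $s/r$.
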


The proofs of 
Theorems \ref{I23_thm3.5}, \ref{I23_thm3.5a}, and \ref{I23_thm3.5b}, together with that of Corollary~\ref{cor:1}, will be presented in Section~\ref{sec:proof}.

\begin{rem}
\label{rem:const-s}
The constant $\eta$ is chosen as the minimum of 
$\alpha_0$ in (TIV), $\beta$ in Proposition~\ref{prop:initial}, 
$\alpha$ in Proposition~\ref{B21_thm1}, and $\eta$ in Proposition~\ref{regularity_phi_psi}.
The constant $s$ is determined in Proposition \ref{LP23_thm8-2}.
\end{rem}
\begin{rem}
    \label{rem:convergence}
The convergence property stated in Corollary \ref{cor:1} is not uniform with respect to the iteration because both $C_{\bar{m}}^{(1)}(k)$ and $C_{\bar{u}}^{(1)}(k)$ diverge as $k\to \infty$. Although similar results have been obtained in \cite[Theorem 3.5]{I23}, we have succeeded in deriving more concrete error estimates.
\end{rem}

\section{Preliminary results}
\label{sec:preliminary}

We review the convergence results on the GCG method established in \cite[Theorems 3.6 and 3.8]{NS26}.
In doing so, we first recall the variational formulations of the MFG system under the potential assumption \textup{(f-P)}. 
We introduce a functional: 
\[
\mathcal{J}(m, w)=\iint_Q m(t, x) L\left(t, x, \frac{w(t, x)}{m(t, x)}\right) ~dxdt 
+ \int_{\T^d} g(x) m(T, x) ~ dx+\int_0^T F(t, m(t,\cdot))~dt,
\]
where $L:Q\times \mathbb{R}^d\to\mathbb{R}$ is given as
\begin{equation*}
L(t, x, v) = \frac{1}{2} |v - h(t, x)|^2,
\end{equation*}
and the value of $m L(\cdot,\cdot,w/m) $ at points where $m = 0$ is understood as $0$.
The relationship between $H$ and $L$ is expressed as
\begin{equation}
\label{eq:legH}
H(t, x, p) = \sup_{v \in \R^d} \left[- p \cdot v - L(t, x, v)\right].
\end{equation}
As a matter of fact, $L$ is called the running cost and plays a role in measuring the cost of the control inputs in the FP equation. 
The functional $\mathcal{J}$ is defined in 
\[
\mathcal{R} := \left\{(m, w) \in W^{1, 2, q}(Q) \times \Theta \mid \begin{matrix}
\del_t m - \nu\Delta{m} + \nabla \cdot w = 0\mbox{ in }Q, \ m(0,\cdot) = m_0 \mbox{ on }\T^d,\\
\exists v \in L^{\infty}(Q; \R^d) \text{ s.t. } w = mv
\end{matrix}\right\}.
\]
Let $(\bar{m},\bar{u})$ be the solution of the MFG system \eqref{MFG_eq} and set $\bar{w}=-\bar{m}\nabla_pH(t,x,\nabla\bar{u})$. Then,  as is verified in \cite[Lemma 3.1]{NS26} and \cite[Proposition 2]{LP23}, $(\bar{m},\bar{w})$ satisfies
\begin{equation*} 
\mathcal{J}(\bar{m}, \bar{w})=
\min_{(m, w) \in \mathcal{R}}\mathcal{J}(m, w). 
\end{equation*}
The GCG method \eqref{GCG} also has a variational formulation; see \cite[Proposition 3.2]{NS26} and \cite[Lemma 6]{LP23}. 
However, we do not recall it here, because the variational structure will not be used directly below. 


\begin{rem}
    \label{rem:m0-1}
We have $(\bar{m}_0,-\bar{m}_0v)\in\mathcal{R}$, where $\bar{m}_0$ denotes the function described in Proposition~\ref{prop:initial}. 
\end{rem}

The optimality gap is defined by 
\begin{equation}
\eps_k := \mathcal{J}(\bar{m}_k, \bar{w}_k) - \mathcal{J}(\bar{m}, \bar{w}),
\label{eq:optimalitygap}
\end{equation}
which is obviously a positive number. 

In \cite{NS26}, we proved the following convergence results on the GCG method; see \cite[Theorems 3.6 and 3.8]{NS26}.

\begin{prop} 
\label{LP23_thm7} 
Suppose that 
\textup{(H)}, 
\textup{(f-B)}, 
\textup{(f-L)}, 
\textup{(f-M)}, 
\textup{(f-P)}, and \textup{(TIV)} are satisfied. 
Let $\bar{m}_{k}$, $m_k$, $u_k$ and $\gamma_k$ be the sequences generated by the GCG method \eqref{GCG} for a given $\bar{m}_{0}$ which is the function in Proposition~\ref{prop:initial}. 
Let $r \in \mathbb{R}$ be the constant appearing in Theorem \ref{I23_thm3.5b}. 
Then, there exists a constant $C > 0$ such that
\begin{equation}
\label{eq:thm7}
\|\bar{m}_{k}-\bar{m}\|_{{L^2(0, T; L^{\infty}(\T^d))}}
+\|u_k-\bar{u}\|_{L^{\infty}(Q)} 
+\|\gamma_k - \bar{\gamma}\|_{{L^2(0, T; L^{\infty}(\T^d))}} 
\le C \eps_k^{{\frac{1}{r}}}.    
\end{equation}
Moreover, we have $\eps_k\le C$. 
\end{prop}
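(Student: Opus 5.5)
The plan is to follow the Lavigne--Pfeiffer strategy, as adapted to local couplings in \cite{NS26}. First, bound the optimality gap $\eps_k$ from below by a weighted quadratic distance of the controls. Second, transfer this to the densities through the FP equation. Third, upgrade to $L^2(0,T;L^\infty(\T^d))$ by interpolation against the uniform bounds of Proposition~\ref{LP23_prop23}. Finally, pass to $\gamma_k$ and $u_k$ using (f-L) and a stability estimate for the HJB equation. Throughout, write $\bar v := -\nabla_pH(\cdot,\cdot,\nabla\bar u)=h-\nabla\bar u$ and $\bar v_k:=\bar w_k/\bar m_k$.

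\textbf{Step 1 (gap controls controls).} Since $L$ is quadratic in $v$, for $(m,w)\in\mathcal R$ with $w=mv$ one has
\[
mL(v)-mL(\bar v)-m\nabla_vL(\bar v)\cdot(v-\bar v)=\tfrac12 m|v-\bar v|^2 .
\]
Combine three facts: the first-order optimality of $(\bar m,\bar w)$, namely that $\bar u$ is the adjoint state and $\nabla_vL(\bar v)=-\nabla\bar u$; the potential identity (f-P) together with monotonicity (f-M), which makes the Bregman remainder of $\int_0^T F\,dt$ nonnegative; and integration by parts against the constraint $\del_t m-\nu\Delta m+\nabla\cdot w=0$, which cancels all linear terms. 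These give
\[
\eps_k\ \ge\ \tfrac12\iint_Q \bar m_k|\bar v_k-\bar v|^2\,dxdt .
\]

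\textbf{Step 2 (density stability).} The difference $\rho_k:=\bar m_k-\bar m$ solves
\[
\del_t\rho_k-\nu\Delta\rho_k+\nabla\cdot(\rho_k\bar v)=-\nabla\cdot\big(\bar m_k(\bar v_k-\bar v)\big),\qquad \rho_k(0)=0 .
\]
By Proposition~\ref{LP23_prop23}, $\|\bar m_k\|_{L^\infty}$ is uniformly bounded, and $\bar v\in\Theta$ is bounded. So $\|\bar m_k(\bar v_k-\bar v)\|_{L^2(Q)}\le\|\bar m_k\|_\infty^{1/2}(2\eps_k)^{1/2}$, and a standard energy estimate with Gronwall yields $\|\rho_k\|_{L^\infty(L^2)}+\|\rho_k\|_{L^2(H^1)}\le C\eps_k^{1/2}$.

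\textbf{Step 3 (upgrade to $L^\infty$ in space).} Since $q>d+2$, the uniform $W^{1,2,q}$ bounds of Proposition~\ref{LP23_prop23} give $\rho_k$ bounded in $L^\infty(0,T;W^{1,\infty}(\T^d))$. For $d=1$ the embedding $H^1\subset L^\infty$ applied pointwise in $t$ gives directly $\|\rho_k\|_{L^2(L^\infty)}\le C\eps_k^{1/2}$, which is the case $r=2$. For $d\ge2$ we use a Gagliardo--Nirenberg interpolation between $L^2$ (or $H^1$) and $W^{1,\infty}$ for each $t$, and then integrate in time. This produces $\|\rho_k\|_{L^2(L^\infty)}\le C\eps_k^{1/r}$ for any $r>d$, where the loss $r>d$ comes from the interpolation exponent. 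Matching the exponent to the stated $r$ is the delicate point, and I expect this step to be the main obstacle.

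\textbf{Step 4 (coupling and value function).} By \eqref{eq:f-L1a}, $\|\gamma_k-\bar\gamma\|_{L^2(L^\infty)}\le L_f\|\rho_k\|_{L^2(L^\infty)}$. The difference $u_k-\bar u$ solves a linear backward parabolic equation. Its drift is $\tfrac12(\nabla u_k+\nabla\bar u)-h$, which is bounded uniformly in $k$, its source is $\gamma_k-\bar\gamma$, and its terminal datum is zero. The comparison principle then gives
\[
\|u_k-\bar u\|_{L^\infty(Q)}\le\int_0^T\|\gamma_k-\bar\gamma\|_{L^\infty}\,dt\le\sqrt T\,\|\gamma_k-\bar\gamma\|_{L^2(L^\infty)} .
\]

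\textbf{Step 5 (bounded gap).} We show $\eps_k\le C$ by bounding $\mathcal J(\bar m_k,\bar w_k)$ from above. The perspective $(m,w)\mapsto mL(w/m)$ is jointly convex, and $(\bar m_k,\bar w_k)$ is a convex combination of the $(m_j,w_j)$, $j<k$, together with $(\bar m_0,-\bar m_0v)$ from Remark~\ref{rem:m0-1}. Hence the running cost is at most the maximum of $\tfrac12\iint m_j|\nabla u_j|^2$ and the corresponding term for $\bar m_0$, both of which are uniformly bounded. The remaining terms, $\int g\,\bar m_k(T)$ and $\int_0^T F(t,\bar m_k)\,dt$, are bounded by the uniform bounds and the continuity of $F$. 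Since $\mathcal J(\bar m,\bar w)$ is finite, $\eps_k\le C$.
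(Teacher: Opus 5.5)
The paper gives no proof of this proposition; it imports it from \cite[Theorems 3.6 and 3.8]{NS26}. Your outline is a correct, self-contained reconstruction of the Lavigne--Pfeiffer argument that \cite{NS26} adapts to local couplings. Step 1 is an exact Bregman identity for the quadratic $L$, Step 2 is the right energy estimate, and the comparison argument in Step 4 is correct. Step 5 gives $\eps_k\le C$ directly, without going through the rate result of Proposition~\ref{LP23_thm8-2}, which is the better order of logic.

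The step you flag as the main obstacle closes in two lines, but only along the $H^1$ branch. Take $r>d$ with $r\ge 2$. For each $t$, Morrey's embedding on $\T^d$, the elementary interpolation $\|g\|_{L^r}\le\|g\|_{L^2}^{2/r}\|g\|_{L^\infty}^{1-2/r}$ applied to $\rho_k$ and $\nabla\rho_k$, and the uniform $W^{1,\infty}$ bound on $\rho_k$ give
\[
\|\rho_k(t)\|_{L^\infty(\T^d)}\le C\|\rho_k(t)\|_{W^{1,r}(\T^d)}\le C\|\rho_k(t)\|_{H^1(\T^d)}^{2/r}\|\rho_k(t)\|_{W^{1,\infty}(\T^d)}^{1-2/r}\le C\|\rho_k(t)\|_{H^1(\T^d)}^{2/r}.
\]
H\"older in time with exponents $r/2$ and $r/(r-2)$ then yields
\[
\|\rho_k\|_{L^2(0,T;L^\infty(\T^d))}\le C\|\rho_k\|_{L^2(0,T;H^1(\T^d))}^{2/r}\le C\eps_k^{1/r}.
\]
For $d=1$ you take $r=2$ directly. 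If you interpolated instead between the $L^\infty(0,T;L^2)$ bound and $W^{1,\infty}$, Gagliardo--Nirenberg would give only the exponent $1/(d+2)$, i.e.\ $r=d+2$. That does not cover the full range $r>d$ stated in the proposition, so you must use the $L^2(0,T;H^1)$ part of Step 2.

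One small point in Step 5. Continuity of $F$ alone does not bound $\int_0^T F(t,\bar m_k(t))\,dt$ without a compactness argument. The cleanest route is (f-P) together with (f-B), which give $|F(t,\bar m_k(t))-F(t,\bar m(t))|\le C_0\|\bar m_k(t)-\bar m(t)\|_{L^1(\T^d)}$.
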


\begin{prop} \label{LP23_thm8-2}
Suppose that 
\textup{(H)}, 
\textup{(f-B)}, 
\textup{(f-L)}, 
\textup{(f-M)}, 
\textup{(f-P)}, and \textup{(TIV)} are satisfied. 
Suppose that $\delta_k$ is chosen in accordance with \eqref{eq:predefine}.  
Then, there exists a constant $C_{\ast} > 0$ satisfying
\begin{equation} 
\label{thm8-2_eq}
\eps_{k+1} \le (1-\delta_k) \eps_k + C_{\ast} \delta_k^2 \eps_k^{\frac{2}{r(r-1)}}.
\end{equation}
Moreover, we have
\begin{equation} 
\label{thm8-2_eq_2}
\eps_k \le \frac{N}{(k + k_1)^s},
\end{equation}
where $s$, $N$ are constants defined by $s = k_2, N = \eps_0 k_1^{k_2} \exp{\left(C_{\ast} k_2^2(k_1+1)/k_1^2\right)}$ if $d = 1$, and by 
$$
s < \min{\left\{k_2, \frac{1}{\rho}\right\}}, \qquad N = \max{\left\{\eps_0 k_1^s, \left(\frac{C_{\ast} k_2^2}{k_2-s}\right)^{\frac{1}{r}}\right\}}, 
$$
if $d \ge 2$. Therein, the constant $\rho$ is defined by 
\begin{equation}
\label{eq:const-r}
\rho = 1-\frac{2}{r(r-1)} < 1.
\end{equation}
\end{prop}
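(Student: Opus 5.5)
The statement to prove is Proposition~\ref{LP23_thm8-2}: the recursion \eqref{thm8-2_eq} and the rate \eqref{thm8-2_eq_2}. The recursion comes from the variational structure, and the rate then follows from a discrete Gronwall/induction argument.

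\textbf{Step 1: the one-step inequality.} Since $\bar{m}_{k+1}=(1-\delta_k)\bar{m}_k+\delta_k m_k$ and $\bar{w}_{k+1}=(1-\delta_k)\bar{w}_k+\delta_k w_k$ with $w_k=-m_k\nabla_pH(\cdot,\cdot,\nabla u_k)$, both iterates lie in $\mathcal{R}$. I would expand $\mathcal{J}$ along the segment from $(\bar{m}_k,\bar{w}_k)$ to $(m_k,w_k)$.

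The key observation is that $(m_k,w_k)$ minimizes the linearized functional at $\bar{m}_k$. In that functional, $F$ is replaced by $\int\gamma_k m$, with $\gamma_k=f(\cdot,\bar m_k)$. This minimizer property is the variational formulation of the GCG method (\cite[Proposition~3.2]{NS26}).

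Two facts then combine:
\begin{itemize}
\item Convexity of $mL(w/m)$ together with this minimization gives the linear part $-\delta_k\times$(primal--dual gap). This gap dominates $\eps_k$ because $F$ is convex by (f-M).
\item The remainder is the term $\int_0^T\!\!\int_0^1 [F\text{-second variation}]$. Using (f-P) and (f-L) it is bounded by $\delta_k^2\int_0^T\|f(\cdot,\bar m_k+\theta\delta_k(m_k-\bar m_k))-f(\cdot,\bar m_k)\|_{L^\infty}\|m_k-\bar m_k\|_{L^1}\,dt$.
\end{itemize}
To turn the remainder into a power of $\eps_k$, I would interpolate $\|m_k-\bar m_k\|$ between the uniform $W^{1,2,q}$ bounds of Proposition~\ref{LP23_prop23} and the $\eps_k$-controlled quantity. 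That quantity is $\|m_k-\bar m\|+\|\bar m_k-\bar m\|\lesssim\eps_k^{1/r}$, by Proposition~\ref{LP23_thm7} and its analogue for $m_k$.

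The Gagliardo--Nirenberg/Morrey interpolation with exponent $r>d$ (respectively $r=2$ for $d=1$) should produce the exponent $\frac{2}{r(r-1)}$. This gives \eqref{thm8-2_eq}.

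\textbf{Step 2: the rate for $d=1$.} Here $r=2$, so $\frac{2}{r(r-1)}=1$ and the recursion is linear:
\[
\eps_{k+1}\le(1-\delta_k+C_\ast\delta_k^2)\eps_k .
\]
Taking logarithms, I would use $\log(1-x)\le -x$ and $\sum_k\delta_k^2\le k_2^2(k_1+1)/k_1^2$, together with $\sum_{j<k}\frac{k_2}{j+k_1}\ge k_2\log\frac{k+k_1}{k_1}$. These give
\[
\eps_k\le \eps_0\Big(\frac{k_1}{k+k_1}\Big)^{k_2}\exp\!\big(C_\ast k_2^2(k_1+1)/k_1^2\big),
\]
which is the stated bound with $s=k_2$ and the stated $N$.

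\textbf{Step 3: the rate for $d\ge2$.} I would prove $\eps_k\le N(k+k_1)^{-s}$ by induction. The base case holds since $N\ge\eps_0k_1^s$.

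For the inductive step, write $a=k+k_1$. The map $\eps\mapsto(1-\delta_k)\eps+C_\ast\delta_k^2\eps^{1-\rho}$ is increasing, so the hypothesis gives
\[
\eps_{k+1}\le N a^{-s}\Big(1-\frac{k_2}{a}\Big)+C_\ast k_2^2a^{-2}N^{1-\rho}a^{-s(1-\rho)}.
\]
It therefore suffices to show this is at most $N(a+1)^{-s}\ge Na^{-s}(1-s/a)$. That reduces to
\[
C_\ast k_2^2 N^{-\rho}a^{-1+s\rho}\le k_2-s .
\]
This holds because $s\rho<1$ makes $a^{-1+s\rho}\le 1$, and $N^{\rho}\ge C_\ast k_2^2/(k_2-s)$ by the choice of $N$. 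The exponent bookkeeping ($\rho$ versus $1/r$ in the stated $N$) needs checking, and I may need to adjust the power in $N$.

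\textbf{Main obstacle.} The main obstacle is Step 1: getting the sharp exponent $\frac{2}{r(r-1)}$ for the curvature term from local coupling. That requires controlling the $L^\infty$-type Lipschitz bound of $f$ through interpolation between $\eps_k$-small norms and the uniform higher-regularity bounds.
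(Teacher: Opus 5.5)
The paper gives no proof of this proposition; it is quoted from \cite[Theorems~3.6 and~3.8]{NS26}. Your Step~1 has the right architecture. Convexity of the perspective $mL(w/m)$ and the minimizing property of $(m_k,w_k)$ for the functional with $F$ linearized at $\bar m_k$ give the linear term. Convexity of $F$ (from \textup{(f-M)}, \textup{(f-P)}) makes the gap $\ge\eps_k$. However, the quantitative core of \eqref{thm8-2_eq} is missing. By \textup{(f-L)}, the remainder satisfies
\[
\delta_k\int_0^1\!\!\int_0^T\bigl\langle f(\cdot,\bar m_k+\theta\delta_k(m_k-\bar m_k))-f(\cdot,\bar m_k),\,m_k-\bar m_k\bigr\rangle\,dt\,d\theta\le\frac{L_f}{2}\,\delta_k^2\,\|m_k-\bar m_k\|^2_{L^2(0,T;L^\infty(\T^d))} .
\]
(The prefactor in front of the $f$-difference is $\delta_k$, not $\delta_k^2$ as you wrote.) Everything therefore hinges on bounding $\|m_k-\bar m\|_{L^2(0,T;L^\infty(\T^d))}$ by a power of $\eps_k$. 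That is exactly what you defer to ``its analogue for $m_k$'' and ``should produce''.

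Proposition~\ref{LP23_thm7} controls $\bar m_k$, $u_k$ and $\gamma_k$, not the best response $m_k$, and the needed bound does not come for free. The difference $m_k-\bar m$ solves a Fokker--Planck equation with zero initial datum and source $\nabla\cdot\bigl(m_k\nabla(u_k-\bar u)\bigr)$. You therefore need three links:
\begin{enumerate}
\item an HJB stability estimate giving $\nabla(u_k-\bar u)$ in terms of $\gamma_k-\bar\gamma$ (Proposition~\ref{LP23_thm7} only provides $\|u_k-\bar u\|_{L^\infty(Q)}$, with no gradient control);
\item an FP stability estimate;
\item an interpolation back to $L^2(0,T;L^\infty(\T^d))$ against the uniform $W^{1,2,q}$ bounds of Proposition~\ref{LP23_prop23}.
\end{enumerate}
Each link costs part of the exponent, and it is this chain that must produce and justify $\frac{2}{r(r-1)}$. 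Without it you have \eqref{thm8-2_eq} for no positive exponent at all. The uniform bounds alone give only $\eps_{k+1}\le(1-\delta_k)\eps_k+C\delta_k^2$, i.e.\ the $O(1/k)$ regime. So both rates, including the linear recursion for $d=1$, rest on an unproved estimate.

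Granting \eqref{thm8-2_eq}, Step~2 is correct and reproduces exactly $s=k_2$ and the stated $N$. Step~3 is also correct, and your suspicion about the constant is justified. The induction closes with $N=\max\{\eps_0k_1^s,\,(C_\ast k_2^2/(k_2-s))^{1/\rho}\}$. With the printed exponent $1/r$, the bound cannot be derived from \eqref{thm8-2_eq} when $C_\ast k_2^2/(k_2-s)>1$. For instance, take $k_1=k_2=1$, so $\delta_0=1$ and the recursion allows $\eps_1=C_\ast\eps_0^{1-\rho}$. Choose $\eps_0=N=(C_\ast/(1-s))^{1/r}$. The requirement $\eps_1\le N2^{-s}$ then reads $C_\ast\le 2^{-s}(C_\ast/(1-s))^{\rho/r}$, which fails for large $C_\ast$ because $\rho/r<1$. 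Keep the exponent $1/\rho$ (or any larger $N$).
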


In the remainder of this section, we establish the regularity and boundedness properties of the solutions $(\phi_k,\psi_k)$ of \eqref{CH_MFG_eq}.

\begin{prop}
Suppose that 
\textup{(H)}, 
\textup{(f-B)}, 
\textup{(f-L)}, 
\textup{(f-M)}, and \textup{(TIV)} are satisfied. Then, there exist positive constants $\phi_{\max}$ and $\phi_{\min}$ such that 
$$
\phi_{\min} \le \phi_k \le \phi_{\max}  \mbox{ in } Q
$$
for $k=0,1,2,\ldots$. 
\end{prop}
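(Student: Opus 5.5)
Since $\phi_k = \exp(-u_k/(2\nu))$ by the Cole--Hopf transformation \eqref{eq:ch}, bounds on $\phi_k$ follow from bounds on $u_k$. By Proposition~\ref{LP23_prop23}(ii), $\|u_k\|_{W^{1,2,q}(Q)}\le C$ uniformly in $k$. Because $q>d+2$, the parabolic Sobolev embedding $W^{1,2,q}(Q)\hookrightarrow \mathcal{C}(\overline{Q})$ (indeed into a Hölder space) gives $\|u_k\|_{L^\infty(Q)}\le C_\infty$ with $C_\infty$ independent of $k$. We then set
\[
\phi_{\min}:=\exp\left(-\frac{C_\infty}{2\nu}\right),\qquad \phi_{\max}:=\exp\left(\frac{C_\infty}{2\nu}\right),
\]
and obtain $\phi_{\min}\le\phi_k\le\phi_{\max}$ in $Q$ for all $k$.

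We will also give a direct argument that does not depend on the embedding constant, since it produces explicit bounds that parallel the discrete maximum principle of Theorem~\ref{thm:DMP}. The function $\phi_k$ solves the linear backward equation \eqref{CH_MFG_eq-1} with terminal datum $e^{-g/(2\nu)}$ and zero-order coefficient $\gamma_k/(2\nu)$. By \textup{(f-B)}, $0\le \gamma_k\le C_0$.

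\textbf{Upper bound.} Because $\gamma_k\phi_k$ has the sign of $\phi_k$, a comparison argument should bound $\phi_k$ above by $\max_{\T^d} e^{-g/(2\nu)}$. Concretely, $\phi_k$ is a subsolution of $\partial_t\phi+\nu\Delta\phi+h\cdot\nabla\phi=0$ wherever $\phi_k\ge0$. This requires knowing positivity first, which we obtain below.

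\textbf{Lower bound.} Consider $\tilde\phi=e^{\lambda(T-t)}\phi_k$ with $\lambda=C_0/(2\nu)$. It satisfies
\[
\partial_t\tilde\phi+\nu\Delta\tilde\phi+h\cdot\nabla\tilde\phi=\left(\frac{\gamma_k}{2\nu}-\lambda\right)\tilde\phi .
\]
Since $\gamma_k/(2\nu)-\lambda\le 0$, the minimum principle for backward parabolic equations on the torus (applied at a negative minimum) shows that $\tilde\phi$ cannot attain a negative interior minimum below its terminal data. This yields $\phi_k\ge e^{-C_0T/(2\nu)}\min_{\T^d} e^{-g/(2\nu)}>0$. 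With positivity established, the upper bound follows in the same way, giving $\phi_k\le \max_{\T^d} e^{-g/(2\nu)}$.

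These bounds depend only on $C_0$, $T$, $\nu$ and $\|g\|_\infty$, so they are uniform in $k$. The solution $\phi_k$ is regular enough for the classical maximum principle: $u_k\in W^{1,2,q}$ with $q>d+2$, and the equation can be bootstrapped to classical Hölder regularity since $\gamma_k\in\Gamma$ and $h\in\mathcal{C}^{1+\alpha_0}$. Alternatively, the weak maximum principle for strong $W^{1,2,q}$ solutions (Aleksandrov--Bakelman--Pucci type) can be used.

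The main obstacle is justifying the maximum principle at this regularity level. If that proves troublesome, we will fall back on the embedding argument of the first paragraph, which needs only Proposition~\ref{LP23_prop23}.
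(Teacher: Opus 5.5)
Your first paragraph is the paper's proof: the uniform bound of Proposition~\ref{LP23_prop23}, combined with the embedding of Lemma~\ref{B21_lem12}, gives $\|u_k\|_{L^\infty(Q)}\le C$ independent of $k$, and exponentiating $\phi_k=e^{-u_k/(2\nu)}$ yields both bounds. The paper leaves the embedding step implicit, and you make it explicit.

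The supplementary maximum-principle argument is not needed, and its lower-bound step has the sign reversed. With zero-order coefficient $\gamma_k/(2\nu)-\lambda\le 0$, the minimum principle rules out \emph{positive} interior minima of $\tilde\phi$, not negative ones. So that step cannot produce the positivity it is meant to supply.

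The argument can be repaired in either of two ways:
\begin{itemize}
\item Take positivity for free from $\phi_k=e^{-u_k/(2\nu)}>0$ and use $\lambda>C_0/(2\nu)$ strictly, then let $\lambda\downarrow C_0/(2\nu)$.
\item Compare $\phi_k$ directly with the subsolution $e^{-C_0(T-t)/(2\nu)}\min_{\T^d}e^{-g/(2\nu)}$ in the original equation, whose coefficient $\gamma_k/(2\nu)\ge 0$ has the right sign.
\end{itemize}
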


\begin{proof}
As recalled in Proposition \ref{LP23_prop23}, we know $\|u_k\|_{L^\infty(Q)}\le C$. Hence, $\exp (-C/2\nu)\le \phi_k=\exp(-u_k/2\nu)\le \exp(C/2\nu)$. 
\end{proof}

\begin{prop} \label{regularity_phi_psi}
Suppose that 
\textup{(H)}, 
\textup{(f-B)}, 
\textup{(f-L)}, 
\textup{(f-M)}, and \textup{(TIV)} are satisfied. Then, there exist constants $\eta \in (0, 1), B_{\phi}$ and $B_{\psi} > 0$, which are indenpendent of the iteration number, such that
\begin{equation} \label{regularity_phi_psi_eq}
\phi_k, \psi_k \in \mathcal{C}^{1 + \frac{\eta}{2}, 2 + \eta}(Q), \quad \|\phi_k\|_{\mathcal{C}^{1 + \frac{\eta}{2}, 2 + \eta}(Q)} \le B_{\phi}, \quad \|\psi_k\|_{\mathcal{C}^{1 + \frac{\eta}{2}, 2 + \eta}(Q)} \le B_{\psi} 
\end{equation}
for all $k = 0, 1, 2, \dots$.
\end{prop}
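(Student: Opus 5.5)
We prove the estimate \eqref{regularity_phi_psi_eq} with linear parabolic Schauder theory, applied separately to \eqref{CH_MFG_eq-1}--\eqref{CH_MFG_eq-2} and to \eqref{CH_MFG_eq-3}--\eqref{CH_MFG_eq-4}. The task reduces to checking that every coefficient and datum is bounded in a suitable H\"older norm uniformly in $k$.

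\emph{Step 1: the potentials $\gamma_k$.} We claim that $\gamma_k\in\mathcal{C}^{\eta_1/2,\eta_1}(Q)$ with a $k$-independent bound for some $\eta_1\in(0,1)$.
\begin{itemize}
\item By Proposition~\ref{LP23_prop23}, $\|\bar m_k\|_{W^{1,2,q}(Q)}\le C$. Since $q>d+2$, the anisotropic Sobolev embedding gives $W^{1,2,q}(Q)\hookrightarrow \mathcal{C}^{(1+\theta)/2,1+\theta}(Q)$ with $\theta=1-(d+2)/q$. Hence $\bar m_k$ is bounded in $\mathcal{C}^{\theta/2,\theta}$ and is Lipschitz in $x$ uniformly in $k$.
\item \eqref{eq:f-L1a} then gives
$|\gamma_k(t_2,x)-\gamma_k(t_1,x)|\le C_0|t_2-t_1|^{\alpha_0}+L_f\|\bar m_k(t_2)-\bar m_k(t_1)\|_{L^\infty}\le C|t_2-t_1|^{\min(\alpha_0,\theta/2)}$.
\item \eqref{eq:f-L1b} with $R=\sup_k\operatorname{Lip}_x(\bar m_k)$ gives a uniform Lipschitz bound in $x$. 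This is consistent with $\|\gamma_k\|_\Gamma\le C$.
\end{itemize}

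\emph{Step 2: $\phi_k$.} Equation \eqref{CH_MFG_eq-1} is linear and backward in time, with bounded H\"older coefficients $h\in\mathcal{C}^{1+\alpha_0}$ and $\gamma_k/(2\nu)$. Its terminal datum is $e^{-g/2\nu}\in\mathcal{C}^{2+\alpha_0}(\T^d)$. On the torus there is no compatibility condition.
\begin{itemize}
\item The Schauder estimate gives $\|\phi_k\|_{\mathcal{C}^{1+\eta/2,2+\eta}}\le C\bigl(\|e^{-g/2\nu}\|_{\mathcal{C}^{2+\eta}}+\|\phi_k\|_{L^\infty}\bigr)$, with $\eta\le\min(\alpha_0,\eta_1)$.
\item The $L^\infty$ term is bounded by $\phi_{\max}$ from the preceding proposition.
\item The constant depends only on the H\"older norms of the coefficients, so by Step 1 it is uniform in $k$.
\end{itemize}

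\emph{Step 3: $\psi_k$.} Expand \eqref{CH_MFG_eq-3} in non-divergence form:
\[
\del_t\psi_k-\nu\Delta\psi_k+h\cdot\nabla\psi_k+\Bigl(\nabla\cdot h+\tfrac{1}{2\nu}\gamma_k\Bigr)\psi_k=0 .
\]
The zeroth-order coefficient is H\"older because $h\in\mathcal{C}^{1+\alpha_0}$. The initial datum $m_0/\phi_k(0)$ is the main point of the argument.
\begin{itemize}
\item Step 2 only gives $\phi_k(0)\in\mathcal{C}^{2+\eta}$, whereas $m_0\in\mathcal{C}^{2+\alpha_0}$.
\item Since $\phi_k\ge\phi_{\min}>0$, the quotient $m_0/\phi_k(0)$ lies in $\mathcal{C}^{2+\eta}(\T^d)$. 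Its norm is bounded by $C(\phi_{\min},B_\phi,\|m_0\|_{\mathcal{C}^{2+\alpha_0}})$, uniformly in $k$.
\item A sup bound on $\psi_k$ follows from the maximum principle, using $\gamma_k\ge0$ and $\nabla\cdot h$ bounded (Gronwall). Alternatively, $\psi_k=m_k/\phi_k\le\|m_k\|_\infty/\phi_{\min}$, where $\|m_k\|_{\infty}$ is bounded by Proposition~\ref{LP23_prop23} via the embedding.
\item Schauder again gives the bound $B_\psi$.
\end{itemize}

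\emph{Main obstacle.} The hardest part is Step 1: turning the $W^{1,2,q}$ bound into a parabolic H\"older bound, in both time and space, of $\gamma_k$ that is uniform in $k$. This requires the anisotropic embedding together with \eqref{eq:f-L1a}--\eqref{eq:f-L1b}. Step 1 also forces $\eta$ to be the minimum of the several H\"older exponents involved.
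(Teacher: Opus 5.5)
Your proposal is correct and follows essentially the same route as the paper. The paper also derives $k$-uniform H\"older bounds on $\gamma_k$ from the $W^{1,2,q}$ bound on $\bar m_k$ (via the embedding Lemma~\ref{B21_lem12}) together with \textup{(f-L)}, and then applies linear parabolic Schauder theory (Lemma~\ref{B21_thm7}) to the time-reversed equation for $\phi_k$ and to the non-divergence form of the $\psi_k$ equation, whose datum $m_0/\phi_k(0)$ is controlled by $\phi_{\min}$ and $B_\phi$. The differences are cosmetic: your Schauder estimate carries an $L^\infty$ term, which you control by $\phi_{\max}$ and the maximum principle, whereas the cited lemma bounds the solution by the data norms alone; and you track a single common exponent $\eta$ explicitly.
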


To prove this proposition, we use the following results. 

\begin{lemma} \label{B21_thm7}
Let $R>0$ and $\beta_0\in (0,1)$. Suppose that 
$u_0\in \mathcal{C}^{2+\beta_0}(\T^d)$, 
$b\in \mathcal{C}^{\frac{\beta_0}{2}, \beta_0}(Q; \R^d)$, $c\in \mathcal{C}^{\frac{\beta_0}{2}, \beta_0}(Q)$ and $w\in \mathcal{C}^{\frac{\beta_0}{2}, \beta_0}(Q)$ satisfies
$$
\|u_0\|_{\mathcal{C}^{2+\beta_0}(\T^d)} \le R,\quad \|b\|_{\mathcal{C}^{\frac{\beta_0}{2}, \beta_0}(Q; \R^d)}\le R,\quad \|c\|_{\mathcal{C}^{\frac{\beta_0}{2}, \beta_0}(Q)} \le R, \quad \|w\|_{\mathcal{C}^{\frac{\beta_0}{2}, \beta_0}(Q)} \le R.
$$ 
Then, the initial value problem  
\begin{equation} 
\label{parabolic_eq}
\del_t u - \nu \Delta{u} + b \cdot \nabla{u} + c u = w \quad \mbox{in } Q, \qquad u(0,\cdot) = u_0 \quad  \mbox{on } \T^d
\end{equation}
has a unique solution $u \in \mathcal{C}^{1 + \beta/2, 2 + \beta}(Q)$. Moreover, there exists a constant $C=C(\beta_0, R) > 0$ such that $\|u\|_{\mathcal{C}^{1 + \beta/2, 2 + \beta}(Q)} \le C$.
\end{lemma}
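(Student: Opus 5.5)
The statement immediately above is Lemma~\ref{B21_thm7}, the classical Schauder theory for linear parabolic equations on the torus with H\"older coefficients. I would prove it by combining the global Schauder a priori estimate with the method of continuity. The exponent $\beta$ in the conclusion is taken to be $\beta_0$, or any smaller exponent, which is harmless. No smallness or sign condition on $c$ is needed, because the time interval $[0,T]$ is bounded and the zeroth-order term can be absorbed by an exponential change of variable.

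\textbf{Step 1 (reduction).} Set $v=e^{-\lambda t}u$ with $\lambda\ge \|c\|_{L^\infty}+1$. Then $v$ solves the same type of equation with $c$ replaced by $c+\lambda\ge 1$ and $w$ replaced by $e^{-\lambda t}w$. The new coefficients have $\mathcal{C}^{\beta_0/2,\beta_0}$ norms bounded by a constant depending only on $R$ and $T$. This yields the maximum principle bound $\|v\|_{L^\infty(Q)}\le \|u_0\|_{L^\infty}+\|w\|_{L^\infty}$ by comparison at an interior maximum, where $\partial_t v\ge0$, $\nabla v=0$ and $\Delta v\le 0$. The torus has no boundary, so there are no boundary terms.

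\textbf{Step 2 (a priori Schauder estimate).} For any solution $v\in\mathcal{C}^{1+\beta_0/2,2+\beta_0}(Q)$, I would establish
\[
\|v\|_{\mathcal{C}^{1+\beta_0/2,2+\beta_0}(Q)}\le C\bigl(\|u_0\|_{\mathcal{C}^{2+\beta_0}}+\|w\|_{\mathcal{C}^{\beta_0/2,\beta_0}}+\|v\|_{L^\infty(Q)}\bigr),
\]
with $C=C(\beta_0,R,\nu,T,d)$. The argument has three parts. First, subtract the initial datum by writing $v=u_0+z$, so that $z(0)=0$ and the datum $u_0$ enters through $\Delta u_0\in\mathcal{C}^{\beta_0}$. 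Second, apply the constant-coefficient heat-equation Schauder estimate, obtained via the heat kernel on $\T^d$ (equivalently, periodic extension to $\R^d$). Third, treat $b\cdot\nabla v+cv$ as a lower-order right-hand side and control it through the interpolation inequality $\|\nabla v\|_{\mathcal{C}^{\beta_0/2,\beta_0}}\le \epsilon\|v\|_{\mathcal{C}^{1+\beta_0/2,2+\beta_0}}+C_\epsilon\|v\|_{L^\infty}$, absorbing the $\epsilon$-term into the left-hand side. Together with Step 1 this gives the bound $C(\beta_0,R)$.

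\textbf{Step 3 (existence via continuity).} Consider the family $L_\tau v=\partial_tv-\nu\Delta v+\tau(b\cdot\nabla v)+(1+\tau(c+\lambda-1))v$ for $\tau\in[0,1]$. Step 2 gives a bound for every $L_\tau$ with a constant uniform in $\tau$. At $\tau=0$ the operator is the heat operator plus the identity, which is solvable in $\mathcal{C}^{1+\beta_0/2,2+\beta_0}$ by explicit Fourier/heat-kernel representation. The method of continuity then yields solvability at $\tau=1$. Uniqueness follows from the maximum principle of Step 1 applied to the difference of two solutions.

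I expect the main obstacle to be the constant-coefficient Schauder estimate for the heat kernel, including the compatibility at $t=0$, which is handled by subtracting $u_0$. In practice I would cite the standard result, e.g. Ladyzhenskaya–Solonnikov–Ural'tseva (Thm IV.5.1) or Krylov's H\"older-space lectures, adapted to the periodic setting. The remaining steps are routine.
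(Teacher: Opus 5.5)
Your proposal is correct and matches the paper in substance. The paper gives no argument of its own beyond citing \cite[Theorem~7]{B21}, and that result rests on exactly the classical parabolic Schauder theory you reconstruct (exponential shift plus maximum principle, constant-coefficient heat estimate with $b\cdot\nabla u + cu$ absorbed by interpolation, continuity method, with $\beta=\beta_0$); one small correction is that on $\T^d$ there is no lateral boundary, so no compatibility condition at $t=0$ arises and subtracting $u_0$ is merely a convenient reduction.
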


\begin{proof}
See {\cite[Theorem 7]{B21}}.
\end{proof}

\begin{lemma} 
\label{B21_lem12}
There exist $\delta \in (0, 1)$ and $C > 0$ such that for any $u \in W^{1, 2, q}(Q)$,
$$
\|u\|_{\mathcal{C}^{\delta}(Q)} + \|\nabla{u}\|_{\mathcal{C}^{\delta}(Q; \R^d)} \le C \|u\|_{W^{1, 2, q}(Q)}.
$$
\end{lemma}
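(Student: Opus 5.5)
The plan is to reduce the statement to the classical anisotropic (parabolic) Sobolev embedding of Ladyzhenskaya–Solonnikov–Ural'ceva (Lemma II.3.3 in their monograph). That embedding says that for $q>d+2$, a function with $\del_t u$, $u$, $\nabla u$, $D_x^2 u\in L^q$ has $u$ and $\nabla u$ Hölder continuous with respect to the parabolic distance $|t-s|^{1/2}+|x-y|$. The exponent can be taken as $\delta=1-(d+2)/q\in(0,1)$; this positivity is exactly why $q>d+2$ was fixed in the Notation. Since $|t-s|\le C|t-s|^{1/2}$ on the bounded interval $(0,T)$, Hölder continuity in the parabolic metric gives Hölder continuity in the Euclidean metric of $Q$ (possibly with exponent $\delta/2$). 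We rename this exponent $\delta$.

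Concretely, I would first transfer the problem to a setting where the classical result applies directly. Using periodicity in $x$, I would view $u$ as a $\Z^d$-periodic function on $(0,T)\times\R^d$. I would then extend it in time across $t=0$ and $t=T$ by the standard higher-order reflection $u(-t,x)=3u(t,x)-2u(2t,x)$, after first cutting off smoothly so that only a neighbourhood of each endpoint is reflected. This yields a function on $(-T,2T)\times\R^d$ whose $W^{1,2,q}$-norm on any fixed compact set is controlled by $C\|u\|_{W^{1,2,q}(Q)}$. I would then multiply by a cutoff $\chi(t,x)$ supported in a compact set containing $[0,T]\times[0,1]^d$, giving a compactly supported function in $W^{1,2,q}(\R^{d+1})$. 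Because $\chi$ is smooth, its norm is again bounded by $C\|u\|_{W^{1,2,q}(Q)}$.

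For the compactly supported function I would prove the embedding by a Campanato argument on parabolic cylinders $Q_\rho=(t_0-\rho^2,t_0)\times B_\rho(x_0)$, whose volume is $\sim\rho^{d+2}$. The key intermediate bound is a parabolic Poincaré inequality. Subtract from $\nabla u$ its mean over $Q_\rho$; subtract from $u$ the affine-in-$x$ polynomial built from the means of $u$ and $\nabla u$. The oscillations of these are then bounded by $\rho$ (resp.\ $\rho^2$) times the $L^q$-norms of $D_x^2u$ and $\del_t u$ on $Q_\rho$, since the time direction scales like $\rho^2$. Hölder's inequality then gives an oscillation of order $\rho^{1-(d+2)/q}$ for $\nabla u$, and of order $\rho^{2-(d+2)/q}$ for $u$ minus its affine approximation. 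Campanato's characterization of parabolic Hölder spaces then yields $\nabla u\in \mathcal{C}^{\delta}$, and hence also $u\in\mathcal{C}^\delta$, with a sup bound coming from the $L^q$-norm plus the seminorm. In every estimate the right-hand side is $C\|u\|_{W^{1,2,q}(Q)}$.

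The main obstacle I expect is the Poincaré step for $\nabla u$ in the time direction. The $W^{1,2,q}$-norm contains $\del_t u$ but not $\del_t\nabla u$, so the time oscillation of $\nabla u$ cannot be estimated directly. I would handle this by interpolation: estimate $\nabla u(t)-\nabla u(s)$ via difference quotients in $x$ at scale $\rho=|t-s|^{1/2}$. The increment $u(t)-u(s)$ is controlled by $\rho^2\|\del_t u\|$, which after dividing by $\rho$ gives $\rho\,\|\del_t u\|$. The spatial error of the difference quotient is controlled by $\rho\|D^2_x u\|$. Both terms are then averaged over the cylinder. If this becomes too delicate to write out, I would fall back on citing the embedding from Ladyzhenskaya–Solonnikov–Ural'ceva directly, after the extension step above.
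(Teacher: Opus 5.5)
Your proposal is correct and takes essentially the same route as the paper, which simply cites the anisotropic embedding of $W^{1,2,q}(Q)$ into parabolic H\"older spaces for $q>d+2$ (Ladyzhenskaya--Solonnikov--Ural'ceva, pp.~80 and 342, or \cite[Lemma~12]{B21}); your extension and Campanato sketch just unpack that citation, and an even reflection in $t$ would already suffice there, since only $\partial_t u$ enters. One small slip: passing from parabolic to Euclidean H\"older continuity rests on $|x-y|^{\delta}\le C|x-y|^{\delta/2}$ for bounded spatial distances, which gives $|t-s|^{\delta/2}+|x-y|^{\delta}\le C|(t,x)-(s,y)|^{\delta/2}$, not on $|t-s|\le C|t-s|^{1/2}$ (that inequality points the wrong way), although your resulting exponent $\delta/2$ is right.
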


\begin{proof}
This is a standard result. 
See \cite[pages 80 and 342]{lsu68} or \cite[Lemma 12]{B21} for example.
\end{proof}

\begin{proof}[Proof of Proposition \ref{regularity_phi_psi}]
By Proposition \ref{LP23_prop23}, we know $\|\bar{m}_k\|_{W^{1, 2, q}(Q)} \le C$. Hence, $\|\bar{m}_k\|_{\mathcal{C}^{\delta}(Q)}\le C$ for some $\delta \in (0, 1)$ and $C>0$ which are independent of $k$ in view of Lemma \ref{B21_lem12}. In particular,the H{\" o}lder constants of $\bar{m}_k$ (with respect to both variables $t$ and $x$) are bounded by some $C>0$  which is independent of $k$.

By choosing $b = -h, c = \gamma_k/2 \nu, w \equiv 0$, and $u_T = \exp{(-g/2\nu)}$ in Lemma \ref{B21_thm7}, $\phi_k(T-t,x)$ is the solution of \eqref{parabolic_eq}. From the Lipschitz continuity of $f$ and the H\"{o}lder continuity of $\bar{m}_k$, we have
\begin{align*}
& \left|f(t_2, x_2, \bar{m}_k(t_2,\cdot)) - f(t_1, x_1, \bar{m}_k(t_1,\cdot))\right| \\
&\le C\left(|t_2-t_1|^{\alpha_0} + |x_2-x_1|\right) + L_f \|\bar{m}_k(t_2,\cdot) - \bar{m}_k(t_1,\cdot)\|_{L^{\infty}(\T^d)}\\
&\le C \left(|t_2-t_1|^{\alpha_0} + |x_2-x_1|\right) + L_f C |t_2 - t_1|^{\delta} \\
&\le C(1 + L_f) \left(|t_2-t_1|^{\min{\{\alpha_0, \delta\}}} + |x_2-x_1|\right).
\end{align*}
Hence $c \in \mathcal{C}^{\min{\{\alpha_0, \delta\}}, 1}(Q)$. Since $g$ and $h$ are sufficiently smooth, choosing $\beta_0 \in (0, 1)$ appropriately, we have $h \in \mathcal{C}^{\beta_0/2, \beta_0}(Q; \R^d), c \in \mathcal{C}^{\beta_0/2, \beta_0}(Q)$, and $u_0 \in \mathcal{C}^{2+\beta_0}(\T^d)$. Therefore, there exist constants $\eta \in (0, 1)$ and $B_{\phi} > 0$ such that
$$
\phi_k \in \mathcal{C}^{1+\frac{\eta}{2}, 2 + \eta}(Q), \qquad \|\phi_k\|_{\mathcal{C}^{1 + \frac{\eta}{2}, 2 + \eta}(Q)} \le B_{\phi}.
$$
Particularly, since $\|c\|_{\mathcal{C}^{\beta_0/2, \beta_0}(Q)}$ and $u_0 = \exp{(-g/2 \nu)}$ are bounded by a constant independent of $k$, $B_{\phi}$ does not depend on $k$.

Next, by choosing $b = h, c = \nabla \cdot h + \gamma_k/2 \nu, w \equiv 0$, and $u_0 = m_0/\phi_k(0)$, $\psi_k$ is the solution to (\ref{parabolic_eq}). From $m_0 \in \mathcal{C}^{2+\alpha_0}(\T^d)$, $\phi_k(0) \in \mathcal{C}^{2+\eta}(\T^d)$, and $\phi_k(0) \ge \phi_{\min} > 0$, we have $u_0 \in \mathcal{C}^{2+\beta_0}(\T^d)$ for a sufficiently small $\beta_0 \in (0, 1)$. We have also $b \in \mathcal{C}^{\beta_0/2, \beta_0}(Q; \R^d)$ and $c \in \mathcal{C}^{\beta_0/2, \beta_0}(Q)$; therefore, there exist constants $\eta \in (0, 1)$ and $B_{\psi} > 0$ such that
$$
\psi_k \in \mathcal{C}^{1+\frac{\eta}{2}, 2 + \eta}(Q), \qquad \|\psi_k\|_{\mathcal{C}^{1 + \frac{\eta}{2}, 2 + \eta}(Q)} \le B_{\psi}.
$$
Particularly, since $\|u_0\|_{\mathcal{C}^{2+\beta_0}(\T^d)}$ is bounded a constant depending only on the Lipschitz constants of $m_0 \in \mathcal{C}^{2+\alpha_0}(\T^d)$, $B_{\phi}$ and $\phi_{\min}$, $B_{\psi}$ does not depend on $k$.
Thus, we complete the proof of \eqref{regularity_phi_psi_eq}. 
\end{proof}

\begin{rem}
In \cite{I23}, the conditions $\phi_k \in \mathcal{C}^{1+\eta/2, 2+\eta}(Q)$ and $\psi_k \in \mathcal{C}^{1+\eta/2, 2+\eta}(Q)$ are only assumed. In this respect, the results obtained in this paper are stronger than those of \cite{I23}.
\end{rem}

\section{Proof of Theorem \ref{I23_thm3.5}}
\label{sec:proof}

 Throughout this section, we suppose that 
\textup{(H)}, 
\textup{(f-B)}, 
\textup{(f-L)}, 
\textup{(f-M)}, \textup{(f-P)}, and \textup{(TIV)} are satisfied. 
Let $(\bar{m}, \bar{u})$ and 
$(u_k,m_k,\gamma_k,\bar{m}_{k})$ 
be the solutions of the MFG system \eqref{MFG_eq} and GCG method \eqref{GCG} given in Propositions in \ref{B21_thm1} and \ref{LP23_prop23}, respectively. 
The solution of the discrete GCG scheme \eqref{eq:dGCG} as stated in Theorem \ref{thm:DMP} is denoted by $(\bar{M}_{n,i}^k,{M}_{n,i}^k,\Gamma_{n,i}^k,,\Phi_{n,i}^k,\Psi_{n,i}^k)$.  
Moreover, the solution of the CH-GCG method \eqref{eq:dGCG} is denoted by $(\phi_k, \psi_k)$. 

\begin{prop} \label{I23_prop4.4,4.6}
There exist positive constants $C_1, C_2, C_3$, and $C_4$ such that
\begin{subequations}
\label{eq:e51}    
\begin{align}
E(\Phi^k,\phi_k) &\le C_1 \left[(\Delta t)^{\frac{\eta}{2}} + (\Delta x)^{\eta}\right] + C_2 E(\bar{M}^k, \bar{m}_k),\label{eq:e51a} \\
E(\Psi^k,\psi_k) &\le C_3 \left[(\Delta t)^{\frac{\eta}{2}} + (\Delta x)^{\eta}\right] + C_4 E(\bar{M}^k, \bar{m}_k).\label{eq:e51b} 
\end{align}
\end{subequations}
Moreover, if $\phi_k$ and $\psi_k$ have the regularity property \eqref{eq:C24}, then we have
\begin{subequations}
\label{eq:e52}    
\begin{align}
E(\Phi^k,\phi_k)&\le C_1 (\Delta t + \Delta x) + C_2 E(\bar{M}^k, \bar{m}_k),\label{eq:e52a}\\
E(\Psi^k,\psi_k)  &\le C_3 (\Delta t + \Delta x) + C_4 E(\bar{M}^k, \bar{m}_k).\label{eq:e52b}
\end{align}
\end{subequations}
Finally, in addition to the above, assume that $h\equiv 0$ in \textup{(H)} and that \textup{(f-L$^\prime$)} is satisfied, then 
\begin{subequations}
\label{eq:e53}   
\begin{align}
E(\Phi^k,\phi_k)&\le C_1 \left[\Delta t + (\Delta x)^2\right] + C_2 E(\bar{M}^k, \bar{m}_k),\label{eq:e53a} \\
E(\Psi^k,\psi_k)  &\le C_3 \left[\Delta t + (\Delta x)^2\right] + C_4 E(\bar{M}^k, \bar{m}_k).\label{eq:e53b} 
\end{align}
\end{subequations}
\end{prop}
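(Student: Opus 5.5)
We prove the estimates by the standard consistency-plus-stability argument: the discrete maximum principle of Theorem~\ref{thm:DMP} makes each time step of the discrete GCG scheme a contraction in $\max_i$, and the regularity of Proposition~\ref{regularity_phi_psi} controls the local truncation errors.

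\textbf{Step 1: error equation for $\Phi^k$.} Set $e_{n,i}:=\Phi^k_{n,i}-\phi_k(t_n,x_i)$. Insert the exact values $\phi_{n,i}:=\phi_k(t_n,x_i)$ into \eqref{eq:dGCG2}. Using \eqref{CH_MFG_eq-1}, this gives
\[
\frac{\phi_{n,i}-\phi_{n-1,i}}{\Delta t}+\nu D_2(\phi,n,i)+D_1^1(\phi,n,i)=\frac{1}{2\nu}\gamma_k(t_n,x_i)\phi_{n-1,i}+\tau_{n,i}.
\]
Subtracting from \eqref{eq:dGCG2}, solving for $e_{n-1,i}$, and absorbing $\Gamma^k\Phi^k_{n-1}-\gamma_k\phi_{n-1}=\Gamma^k e_{n-1}+(\Gamma^k-\gamma_k)\phi_{n-1}$, we obtain
\[
\Bigl(1+\tfrac{\Delta t}{2\nu}\Gamma^k_{n,i}\Bigr)e_{n-1,i}=\sum_j a_{ij}e_{n,j}-\tfrac{\Delta t}{2\nu}(\Gamma^k_{n,i}-\gamma_k(t_n,x_i))\phi_{n-1,i}-\Delta t\,\tau_{n,i}.
\]
Here $a_{ij}\ge0$ and $\sum_j a_{ij}=1$; nonnegativity is exactly the CFL condition \eqref{eq:cfl} combined with the upwind structure. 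Since $\Gamma^k\ge0$ by (f-B), this yields
\[
\max_i|e_{n-1,i}|\le \max_i|e_{n,i}|+C\Delta t\bigl(\max_i|\Gamma^k_{n,i}-\gamma_k(t_n,x_i)|+\max_i|\tau_{n,i}|\bigr),
\]
where $C$ involves $\phi_{\max}$. Because $e_{N_t,i}=0$, summing over $n$ gives
\[
E(\Phi^k,\phi_k)\le CT\bigl(\max|\Gamma^k-\gamma_k|+\max|\tau|\bigr).
\]

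\textbf{Step 2: the coupling term.} By (f-L) at fixed $t_n$,
\[
|\Gamma^k_{n,i}-\gamma_k(t_n,x_i)|\le L_f\|\bar M^k_n-\bar m_k(t_n,\cdot)\|_{L^\infty(\T^d)}\le L_f\bigl(E(\bar M^k,\bar m_k)+C\Delta x^{\delta'}\bigr).
\]
The second term comes from the piecewise-constant interpolation \eqref{eq:dGCG1} and the uniform H\"older/Lipschitz bound on $\bar m_k$ from Proposition~\ref{LP23_prop23} and Lemma~\ref{B21_lem12}. Since $\bar m_k\in W^{1,2,q}$ gives $\nabla\bar m_k$ uniformly bounded, this interpolation error is in fact $O(\Delta x)$. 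For (f-L$'$) the bound is pointwise, so no interpolation error appears at all, which is why the $(\Delta x)^2$ rate in \eqref{eq:e53} needs it.

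\textbf{Step 3: truncation errors.} This is the step I expect to be the main obstacle, because only H\"older regularity is available. With $\phi_k\in\mathcal C^{1+\eta/2,2+\eta}$ uniformly in $k$ (Proposition~\ref{regularity_phi_psi}), Taylor expansion with H\"older remainder gives the following contributions:
\begin{itemize}
\item the backward difference and the one-step time shift of $\gamma_k\phi$: $O(\Delta t^{\eta/2})$, using $\gamma_k\in\Gamma$ and the H\"older continuity of $\gamma_k$ in $t$, as in the proof of Proposition~\ref{regularity_phi_psi};
\item $D_2$: $O(\Delta x^\eta)$;
\item the upwind $D_1^1$: $O(\Delta x)\le O(\Delta x^\eta)$.
\end{itemize}
Under \eqref{eq:C24} these improve to $O(\Delta t+\Delta x)$, the upwind term being the bottleneck. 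When $h\equiv0$ the upwind term disappears and the centered Laplacian gives $O(\Delta t+\Delta x^2)$. In that case the time-H\"older term of $f$ must be handled by $\alpha_0$ or absorbed; I would check that \eqref{eq:C24} controls $\gamma_k\phi_k$ through the equation itself.

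\textbf{Step 4: $\Psi^k$.} The argument is analogous but marches forward with the conservative operator $D_1^2$, whose coefficients are also nonnegative under \eqref{eq:cfl}. The row sums now equal $1+O(\Delta t)$ rather than $1$, with the defect controlled by $\|\nabla h\|_\infty$, so a discrete Gronwall argument gives a factor $e^{CT}$.

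The initial error is
\[
|m_0(x_i)|\,\bigl|1/\Phi^k_{0,i}-1/\phi_k(0,x_i)\bigr|\le C\,E(\Phi^k,\phi_k)/\phi_{\min}\Phi_{\min}.
\]
We bound it by Step 1. Together with $\Psi_{\max}$ from Theorem~\ref{thm:DMP}, which controls the $\Gamma$-difference term, this yields \eqref{eq:e51b}, \eqref{eq:e52b}, and \eqref{eq:e53b}.
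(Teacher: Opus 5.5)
Your proposal follows essentially the same route as the paper's proof: you rewrite the error equation as a convex combination using \eqref{eq:cfl} and $\Gamma^k_{n,i}\ge 0$, bound the coupling mismatch by \eqref{eq:f-L1a} plus the $O(\Delta x)$ error of the piecewise-constant $\bar M^k_n$ (pointwise under (f-L$^\prime$)), take the truncation errors from Taylor expansion with H\"older remainders, and sum the one-step bounds starting from $e_{N_t,i}=0$. Your Step~4 usefully spells out what the paper compresses into ``exactly the same way'': the nonzero initial error $m_0(x_i)\bigl(1/\Phi^k_{0,i}-1/\phi_k(0,x_i)\bigr)$, controlled by the $\Phi$-estimate, and the $1+O(\Delta t)$ absolute row sums of $D_1^2$, handled by a discrete Gronwall factor.

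The worry at the end of your Step~3 is unfounded. Since $\Gamma^k_{n,i}$ and $\gamma_k(t_n,x_i)$ are both evaluated at $t_n$, no time-H\"older term of $f$ appears, and the shift term $\gamma_k(t_n,x_i)(\phi_{n,i}-\phi_{n-1,i})$ is at most $\|\gamma_k\|_{L^\infty(Q)}\|\partial_t\phi_k\|_{L^\infty(Q)}\Delta t$, exactly as in the paper. The one point left unchecked, by you and by the paper alike, concerns \eqref{eq:e52b}: the Taylor bound giving an $O(\Delta x)$ truncation error for $D_1^2$ needs $\partial_\ell^2(h^\ell\psi_k)$ to be bounded. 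That is not obviously supplied by (H) with $h\in\mathcal{C}^{1+\alpha_0}$ together with \eqref{eq:C24} when $d\ge 2$; for $d=1$ it follows from the $\psi_k$-equation itself.
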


\begin{proof}
Although the proof of estimates \eqref{eq:e51} is essentially reported in \cite[Propositions 4.4 and 4.6]{I23} only for $d=1$, we state it here for completeness.

Setting $\phi^k_{n, i} := \phi_k(t_n, x_i)$ and $e^k_{n, i} := \Phi^k_{n, i} - \phi^k_{n, i}$, we see
\begin{align} \label{eq_consistency_error}
& \frac{e^k_{n, i} - e^k_{n-1, i}}{\Delta t} + \nu D_2(e^k, n, i) + D_1^1(e^k, n, i) - \frac{1}{2 \nu} \Gamma^k_{n, i} e^k_{n-1, i}\notag\\
&= \left[\del_t \phi_k(t_n, x_i) - \frac{\phi^k_{n, i} - \phi^k_{n-1, i}}{\Delta t}\right] + \nu \left[\Delta \phi_k(t_n, x_i) - D_2(\phi^k, n, i)\right]\notag\\
&\qqquad +\left[h_{n, i} \cdot \nabla \phi_k(t_n, x_i) - D_1^1(\phi^k, n, i)\right] \notag\\
&\qqquad -\frac{1}{2 \nu} \left[\gamma_k(t_n, x_i)(\phi^k_{n, i} - \phi^k_{n-1, i}) + (\gamma_k(t_n, x_i) - \Gamma^k_{n, i})\phi^k_{n-1, i} \right]\notag\\
&=: r^1_{n, i} + \nu r^2_{n, i} + r^3_{n, i} -\frac{1}{2 \nu} r^4_{n, i}.
\end{align}
By a direct use of Taylor's theorem, we derive
\begin{subequations}
\label{eq:quotients}
\begin{gather}
|r^1_{n, i}| \le C B_\phi (\Delta t)^{\frac{\eta}{2}}, \qquad |r^2_{n, i}| \le CB_\phi  (\Delta x)^\eta,\label{eq:quotients} \\
|r^3_{n, i}| \le C\|h\|_{L^{\infty}(Q;\mathbb{R}^d)} B_\phi \Delta x,\label{eq:quotients2}
  \end{gather}
\end{subequations}
where $B_\phi$ denotes the constant appearing in Proposition \ref{regularity_phi_psi}. 

Moreover, using \textup{(f-L)}, \eqref{eq:f-L1a}, we have 
\begin{align} \label{ineq_estimate_r4_1}
|r^4_{n, i}| &\le \|\gamma_k\|_{L^{\infty}(Q)} \|\del_t \phi_k\|_{L^{\infty}(Q)} \Delta t + \left|f(t_n, x_i, \bar{m}_k(t_n)) - f(t_n, x_i, \bar{M}^k_n)\right| \|\phi_k\|_{L^{\infty}(Q)} \notag\\
&\le \|\gamma_k\|_{L^{\infty}(Q)} \|\del_t \phi_k\|_{L^{\infty}(Q)} \Delta t + L_f \|\phi_k\|_{L^{\infty}(Q)} \left\|\bar{m}_k(t_n) - \bar{M}^k_n\right\|_{L^{\infty}(\T^d)}.
\end{align}

We estimate the second term of the right-hand side as 
\begin{align*}
|\bar{m}_k(t_n, x) - \bar{M}^k_n(x)| &\le |\bar{m}_k(t_n, x) - \bar{m}_k(t_n, x_i)| + |\bar{m}_k(t_n, x_i) - \bar{M}^k_{n, i}| \\
&\le  C\|\nabla \bar{m}_k\|_{L^\infty(Q)} \Delta x + E(\bar{M}^k, \bar{m}_k)
\end{align*}
for $x \in \prod_{\ell=1}^d \left[x_{i_{\ell}} - \Delta x/2, x_{i_{\ell}} + \Delta x/2\right)$. 

Since 
$\|\gamma_k\|_{L^{\infty}(Q)}\le C$ and 
$\|\nabla \bar{m}_k\|_{L^\infty(Q)}\le \|\bar{m}_k\|_{W^{1,2,q}(Q)}\le C$ by Proposition \ref{LP23_prop23} and Lemma \ref{B21_lem12}, we obtain
\begin{equation*}
|r^4_{n, i}| \le C B_\phi \Delta t + C B_\phi \left[\Delta x + E(\bar{M}^k, \bar{m}_k)\right].
\end{equation*}
Summing up, there exist constants $C, C' > 0$ whcih are independent of $k$ such that
\begin{equation}
    \label{ineq_estimate_r4_1b}
\left|r^1_{n, i} + \nu r^2_{n, i} + r^3_{n, i} -\frac{1}{2 \nu} r^4_{n, i}\right| \le C \left[(\Delta t)^{\frac{\eta}{2}}+(\Delta x)^{\eta}\right] + C' E(\bar{M}^k, \bar{m}_k).
\end{equation}

Letting $\delta_1 := \Delta t/\Delta x$ and $\delta_2 := \Delta t/(\Delta x)^2$, we now rewrite \eqref{eq_consistency_error} as
\begin{align*}
e^k_{n-1, i} &= \frac{1}{1 + \frac{\Delta t}{2 \nu} \Gamma^k_{n, i}} \left[\left(1- \sum_{\ell=1}^d |h_{n, i}^{\ell}| \delta_1 - 2 d \nu \delta_2\right)e^k_{n, i} + \sum_{\ell=1}^d \left(h_{n, i}^{\ell, +} \delta_1 + \nu \delta_2\right)e^k_{n, i[\ell^+]} \right.\\
&\qqquad + \left. \sum_{\ell=1}^d  \left(h_{n, i}^{\ell, -} \delta_1 + \nu \delta_2\right)e^k_{n, i[\ell^-]} -\left(r^1_{n, i} + \nu r^2_{n, i} + r^3_{n, i} -\frac{1}{2 \nu} r^4_{n, i}\right) \Delta t\right],
\end{align*}
where the denominator $\ge 1$ thanks to \textup{(f-B)}. 

Setting $E^k_n := \max_{i\in\Lambda_x} |e^k_{n, i}|$, we can perform the estimation as
\begin{align*}
|e^k_{n-1, i}| &\le \frac{1}{1 + \frac{\Delta t}{2 \nu} \Gamma^k_{n, i}} \left[\left(1- \sum_{\ell=1}^d |h_{n, i}^{\ell}| \delta_1 - 2 d \nu \delta_2\right) E^k_n + \sum_{\ell=1}^d \left(h_{n, i}^{\ell, +} \delta_1 + \nu \delta_2\right) E^k_n \right.\\
&\ + \left. \sum_{\ell=1}^d  \left(h_{n, i}^{\ell, -} \delta_1 + \nu \delta_2\right) E^k_n +\left\{
C\left[(\Delta t)^{\frac{\eta}{2}}+(\Delta x)^{\eta}\right]  + C' E(\bar{M}^k, \bar{m}_k)\right\}\Delta t\right]\\
&\le E^k_n + \left\{
C\left[(\Delta t)^{\frac{\eta}{2}}+(\Delta x)^{\eta}\right]  + C' E(\bar{M}^k, \bar{m}_k)\right\}\Delta t.
\end{align*}
Therein, we have used inequalities $1 - |h_{n, i}^{\ell}|\delta_1 - 2 d \nu \delta_2 > 0$ and $1 + \frac{\Delta t}{2 \nu} \Gamma^k_{n, i} > 1$, which are the consequences \eqref{eq:cfl} and \textup{(f-B)}, respectively. Therefore, 
\begin{align*}
E^k_{n-1} &\le E^k_n + \left\{
C\left[(\Delta t)^{\frac{\eta}{2}}+(\Delta x)^{\eta}\right]  + C' E(\bar{M}^k, \bar{m}_k)\right\} \Delta t\\
&\le E_{N_t} + (N_t-n+1) \Delta t \left\{
C\left[(\Delta t)^{\frac{\eta}{2}}+(\Delta x)^{\eta}\right]  + C' E(\bar{M}^k, \bar{m}_k)\right\}\\
&\le T \left\{
C\left[(\Delta t)^{\frac{\eta}{2}}+(\Delta x)^{\eta}\right]  + C' E(\bar{M}^k, \bar{m}_k)\right\},
\end{align*}
which implies \eqref{eq:e51a}. The estimate \eqref{eq:e51b} is derived in exactly same way. 

If $\phi_k$ and $\psi_k$ have the regularity property \eqref{eq:C24}, we derive
\begin{equation}
\label{eq:quotients3}
|r^1_{n, i}| \le C \Delta t, \qquad |r^2_{n, i}| \le C(\Delta x)^2
\end{equation}
instead of \eqref{eq:quotients}. The rest is exactly the same as above. Thus, \eqref{eq:e52} are proved.

Finally, assume that $h\equiv 0$ in \textup{(H)} and suppose that \textup{(f-L$^\prime$)} is satisfied. 
Then, \eqref{eq:quotients3} is still available and $r^3_{n, i} = 0$. Moreover, using \textup{(f-L$^\prime$)}, we can directly estimate as
\begin{align} \label{ineq_estimate_r4_2}
|r^4_{n, i}| &\le \|\gamma_k\|_{L^{\infty}(Q)} \|\del_t \phi_k\|_{L^{\infty}(Q)} \Delta t + L_f \|\phi_k\|_{L^{\infty}(Q)} \left|\bar{m}_k(t_n, x_i) - \bar{M}^k_{n, i}\right|\\
&\le \|\gamma_k\|_{L^{\infty}(Q)} \|\del_t \phi_k\|_{L^{\infty}(Q)} \Delta t + L_f \|\phi_k\|_{L^{\infty}(Q)} E(\bar{M}^k, \bar{m}_k).\notag
\end{align}
instead of \eqref{ineq_estimate_r4_1}. Therefore, \eqref{ineq_estimate_r4_1b} is replaced by 
\begin{equation*}
\left|r^1_{n, i} + \nu r^2_{n, i}  -\frac{1}{2 \nu} r^4_{n, i}\right| \le C \left[\Delta t+(\Delta x)^2\right] + C' E(\bar{M}^k, \bar{m}_k).
\end{equation*}
Consequently, we prove \eqref{eq:e53}. 
\end{proof}

\begin{prop} \label{I23_thm3.4}
There exist increasing functions $K_{\mathrm{q}}(k) > 0$ of $k$ ($\mathrm{q} = \phi, \psi, \bar{m}$) such that
\begin{align*}
\max_{n, i} |\Phi^k_{n, i} - \phi_k(t_n, x_i)| &\le K_{\phi}(k) \left[(\Delta t)^{\frac{\eta}{2}} + (\Delta x)^{\eta}\right],\\
\max_{n, i} |\Psi^k_{n, i} - \psi_k(t_n, x_i)| &\le K_{\psi}(k) \left[(\Delta t)^{\frac{\eta}{2}} + (\Delta x)^{\eta}\right],\\
\max_{n, i} |\bar{M}^k_{n, i} - \bar{m}_k(t_n, x_i)| &\le K_{\bar{m}}(k) \left[(\Delta t)^{\frac{\eta}{2}} + (\Delta x)^{\eta}\right].
\end{align*}
The asymptotic growth of $K_{\mathrm{q}}(k)$ ($\mathrm{q} = \phi, \psi, \bar{m}$) is polynomially bounded from above and logarithmically bounded from below as $k\to\infty$. Moreover, if $\phi_k$ and 
$\psi_k$ have the regularity property \eqref{eq:C24}, 
$(\Delta t)^{\frac{\eta}{2}}+(\Delta x)^{\eta}$ can be replace by 
$\Delta t+\Delta x$ in these inequalities. 
Finally, in addition to the above assumptions, suppose that $h\equiv 0$ in \textup{(H)} and that \textup{(f-L$^\prime$)} is satisfied. Then, $(\Delta t)^{\frac{\eta}{2}}+(\Delta x)^{\eta}$ can be replaced by $\Delta t+(\Delta x)^2$.  
\end{prop}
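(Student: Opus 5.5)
Let $\omega := (\Delta t)^{\eta/2} + (\Delta x)^{\eta}$, and write $a_k := E(\bar{M}^k,\bar{m}_k)$, $b_k := E(\Phi^k,\phi_k)$, $c_k := E(\Psi^k,\psi_k)$. The proof will be a recursion in $k$ that couples Proposition~\ref{I23_prop4.4,4.6} with the update rules \eqref{eq:dGCG5} and \eqref{GCGf}.

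\emph{Starting value.} Since $\bar{M}^0_{n,i} = \bar{m}_0(t_n,x_i)$, we have $a_0 = 0$.

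\emph{Product step.} At the grid points,
\[
M^k_{n,i} - m_k(t_n,x_i) = (\Phi^k_{n,i} - \phi^k_{n,i})\Psi^k_{n,i} + \phi^k_{n,i}(\Psi^k_{n,i} - \psi^k_{n,i}).
\]
We bound $\Psi^k_{n,i}$ by $\Psi_{\max}$ from Theorem~\ref{thm:DMP} and $\phi_k$ by $\phi_{\max}$, both independent of $k$. This gives $E(M^k,m_k) \le \Psi_{\max} b_k + \phi_{\max} c_k$.

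\emph{Averaging step.} Subtracting \eqref{GCGf} from \eqref{eq:dGCG5} gives
\[
a_{k+1} \le (1-\delta_k) a_k + \delta_k E(M^k,m_k).
\]
Inserting \eqref{eq:e51} then yields
\[
a_{k+1} \le (1 + A\delta_k) a_k + B\delta_k \omega,
\]
with $A := \Psi_{\max}C_2 + \phi_{\max}C_4$ (we may drop the $-\delta_k a_k$ term or keep it as $A-1$) and $B := \Psi_{\max}C_1 + \phi_{\max}C_3$. All constants are independent of $k$, because the constants in Proposition~\ref{I23_prop4.4,4.6} depend only on the $k$-uniform bounds of Propositions~\ref{LP23_prop23} and~\ref{regularity_phi_psi}.

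\emph{Unrolling the recursion.} A discrete Gronwall argument starting from $a_0 = 0$ gives
\[
a_k \le B\omega \sum_{j=0}^{k-1} \delta_j \prod_{l=j+1}^{k-1} (1 + A\delta_l) \le \Bigl[\,\prod_{l=0}^{k-1}(1 + A\delta_l) - 1\Bigr] \frac{B}{A}\,\omega.
\]
So $K_{\bar{m}}(k) := \frac{B}{A}\bigl(\prod_{l<k}(1 + A\delta_l) - 1\bigr)$, which is increasing in $k$. Then \eqref{eq:e51} gives $K_\phi(k) := C_1 + C_2 K_{\bar{m}}(k)$ and $K_\psi(k) := C_3 + C_4 K_{\bar{m}}(k)$, both also increasing.

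\emph{Growth rate.} With $\delta_l = k_2/(l + k_1)$, we use $\log(1+x) \le x$ and $\sum_{l<k} 1/(l + k_1) \le \log((k + k_1)/k_1) + 1/k_1$. This gives
\[
\prod_{l<k} (1 + A\delta_l) \le C (k + k_1)^{A k_2},
\]
a polynomial upper bound. For the logarithmic lower bound, the product is at least $1 + A\sum_{l<k}\delta_l \ge 1 + A k_2 \log((k + k_1)/k_1)$. Hence $K_{\bar{m}}(k) \ge B k_2 \log((k + k_1)/k_1)$, and $K_\phi$, $K_\psi$ inherit the same lower bound.

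This lower bound is only a lower bound for the chosen majorant $K_{\mathrm{q}}$; it is not a claim that the true error grows. The statement only asserts the growth of the functions $K_{\mathrm{q}}$, so this is fine. The main point to check carefully is that $C_1,\dots,C_4$ are genuinely $k$-independent, for example that $\|\partial_t\phi_k\|_{L^\infty}$ and $\|\nabla\bar{m}_k\|_{L^\infty}$ are uniformly bounded, which follows from Propositions~\ref{regularity_phi_psi}, \ref{LP23_prop23} and Lemma~\ref{B21_lem12}.

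\emph{Improved rates.} Replacing \eqref{eq:e51} by \eqref{eq:e52} or \eqref{eq:e53} in the identical argument replaces $\omega$ by $\Delta t + \Delta x$ or by $\Delta t + (\Delta x)^2$, respectively.
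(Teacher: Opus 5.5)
Your proposal is correct and follows the paper's proof. It uses the same product estimate $E(M^k,m_k)\le B\omega + A\,E(\bar M^k,\bar m_k)$, the same averaged recursion unrolled from $E(\bar M^0,\bar m_0)=0$, and the same definitions $K_\phi=C_1+C_2K_{\bar m}$ and $K_\psi=C_3+C_4K_{\bar m}$.

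The only real difference is the growth bound. You use $1+x\le e^x$ and the harmonic sum to get $\prod_{l<k}(1+A\delta_l)\le C(k+k_1)^{Ak_2}$, and the logarithmic lower bound is then automatic because $A>0$. The paper instead keeps the $-\delta_k$ term and proves $a_k\le k_2k^{k_2(\bar C_2-1)}$ by induction, after enlarging $\bar C_2$. Your argument is simpler, at the cost of a slightly larger exponent.
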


\begin{proof}
We set 
$E^k := E(M^k,m_k)$ and $\bar{E}^k :=E(\bar{M}^k,\bar{m}_k)$.
Using Proposition \ref{I23_prop4.4,4.6}, \eqref{eq:e51} , we estimate as
\begin{align*}
E^k &= \max_{(n, i)\in\Lambda} |\Phi^k_{n, i} \Psi^k_{n, i} - \phi_k(t_n, x_i) \psi_k(t_n, x_i)|\\
&\le \max_{(n, i)\in\Lambda} \left[|\Phi^k_{n, i} - \psi_k(t_n, x_i)| \Psi^k_{n, i} + \phi_k(t_n, x_i) |\Psi^k_{n, i} + \psi_k(t_n, x_i)|\right]\\
&\le \Psi_{\max} \left\{C_1 \left[(\Delta t)^{\frac{\eta}{2}} + (\Delta x)^{\eta}\right] + C_2 E(\bar{M}^k, \bar{m}_k)\right\} \\
& \qqquad + \phi_{\max} \left\{C_3 \left[(\Delta t)^{\frac{\eta}{2}} + (\Delta x)^{\eta}\right] + C_4 E(\bar{M}^k, \bar{m}_k)\right\}\\
&= \bar{C}_1 \left[(\Delta t)^{\frac{\eta}{2}} + (\Delta x)^{\eta}\right] + \bar{C}_2 \bar{E}^k,
\end{align*}
where $\bar{C}_1 := C_1 \Psi_{\max} + C_3 \Phi_{\max}$ and $\bar{C}_2 := C_2 \Psi_{\max} + C_4 \Phi_{\max}$. 
Repeating this procedure, we observe
\begin{align*}
\bar{E}^{k+1} &\le (1-\delta_k) \bar{E}^k + \delta_k E^k\\
&\le \left[(1-\delta_k) + \delta_k \bar{C}_2\right] \bar{E}^k + \delta_k \bar{C}_1 \left[(\Delta t)^{\frac{\eta}{2}} + (\Delta x)^{\eta}\right]\\
&\le \bar{C}_1 \left[(\Delta t)^{\frac{\eta}{2}} + (\Delta x)^{\eta}\right] \sum_{\ell=0}^k \delta_{\ell} \prod_{\ell'=\ell+1}^k \left[(1-\delta_{\ell'}) + \delta_{\ell'} \bar{C}_2\right]\\
&= K_{\bar{m}}(k+1) \left[(\Delta t)^{\frac{\eta}{2}} + (\Delta x)^{\eta}\right],
\end{align*}
where we have used $\bar{E}_0=0$ and have set 
$$
K_{\bar{m}}(k) := \bar{C}_1 \sum_{\ell=0}^{k-1} \delta_{\ell} \prod_{\ell'=\ell+1}^{k-1} \left[(1-\delta_{\ell'}) + \delta_{\ell'} \bar{C}_2\right].
$$
This, together with Proposition \ref{I23_prop4.4,4.6}, we deduce
\begin{align*}
E(\Phi^k,\phi_k) &\le \underbrace{\left[C_1 + C_2 K_{\bar{m}}(k)\right]}_{=: K_{\phi}(k)} \left[(\Delta t)^{\frac{\eta}{2}} + (\Delta x)^{\eta}\right],\\
E(\Psi^k,\psi_k)  &\le \underbrace{\left[C_3 + C_4 K_{\bar{m}}(k)\right]}_{=: K_{\psi}(k)} \left[(\Delta t)^{\frac{\eta}{2}} + (\Delta x)^{\eta}\right].
\end{align*}

Other cases are proved in the same way as above using Proposition \ref{I23_prop4.4,4.6}, \eqref{eq:e52} and \eqref{eq:e53}.  

What is left is to estimate the asymptotic growth rate of $K_{\bar{m}}(k)$. Set $a_k := K_{\bar{m}}(k)/\bar{C}_1$. Then, $a_k$ satisfies the following recurrence formula
$$
a_0 = 0, \qquad a_{k+1} = \left[(1-\delta_k) + \delta_k \bar{C}_2\right] a_k + \delta_k.
$$
We show that $a_k \le c k^s$ holds for appropriate constants $c$ and $s$ by induction. It obviously holds for the case $k=0$. In general, assuming that it holds for $k$, we see
\begin{align*}
a_{k+1} &\le \left[(1-\delta_k) + \delta_k \bar{C}_2\right] c k^s + \delta_k\\
&= ck^{s+1} + \frac{c}{k+k_1} \left\{\left[k + k_2 + k_2(\bar{C}_2-1)\right] k^s - (k+1)^s (k+k_1) + \frac{k_2}{c}\right\}\\
&=: ck^{s+1} + \frac{c}{k+k_1} P(k).
\end{align*}
Since $k_1 \ge 1$, 
\begin{align*}
P(k) &= - \left\{\left[(k+1)^s - k^s\right] (k+k_1) - k_2 (\bar{C}_2-1) k^s - \frac{k_2}{c}\right\}\\
&\le - \left\{(k+1)^{s+1} - k^{s+1} - \left[k_2 (\bar{C}_2-1) + 1\right] k^s - \frac{k_2}{c}\right\}\\
&= -\left[Q(k) - \frac{k_2}{c}\right],
\end{align*}
where $
Q(k) := (k+1)^{s+1} - k^{s+1} - \left[k_2 (\bar{C}_2-1) + 1\right] k^s$. 
We observe
\[
Q'(k) 
= (s+1) k^s \left\{\left(1+\frac{1}{k}\right)^s - 1 - \frac{s \left[k_2(\bar{C}_2-1) + 1\right]}{s+1} \frac{1}{k} \right\}.
\]
At this stage, take $s = k_2 (\bar{C}_2 - 1)$. We may assume that $s>1$, since, if necessary, we make $\bar{C}_2$ larger. Consequently,  
$$
Q'(k) = (s+1) k^s \left[\left(1+\frac{1}{k}\right)^s - 1 - \frac{s}{k}\right] > 0,
$$
which implies that $Q$ is increasing on $[1, \infty)$. We then obtain
$$
Q(k) \ge \min{\{Q(0), Q(1)\}} = \min{\{1, 2^{s+1} - s - 2\}} = 1,
$$
which implies that $P(k) \le k_2/c - 1$. Consequently, by choosing $c = k_2$, we have $P(k) \le 0$ and hence $a_{k+1} \le c k ^{s+1}$. Therefore, we obtain
$$
K_{\bar{m}}(k) \le \bar{C}_1 k_2 k^{k_2(\bar{C}_2-1)}.
$$
This means that $K_{\bar{m}}(k)$ grows no faster than polynomially. 

On the other hand, because $a_{k+1} \ge a_k + \delta_k$  and, hence, 
$$
a_k \ge \sum_{\ell=0}^{k-1} \frac{k_2}{\ell+k_1},
$$
the function $K_{\bar{m}}(k) = \bar{C}_1 a_k$ grows no slower than logarithmically. 
\end{proof}

\begin{rem}
\label{rem:log}
At first glance, one might expect $K_{\mathrm{q}}(k)$ to grow exponentially as we stated in \cite[Theorem 3.4]{I23}. However, we have succeeded in getting shaper estimates in this  paper. 
\end{rem}

\medskip

Having completed the above preparations, we are ready to proceed to the following proof.

\begin{proof}[Proof of Theorem \ref{I23_thm3.5}]
\noindent We first state the proof of \eqref{eq:err-1b}. 
Recall that $U^k_{n, i}$ is defined by \eqref{eq:err-0b}, which is an approximation of $u_k(t_n, x_i)=-2 \nu \log{\phi_k(t_n, x_i)}$. 
We write as
\begin{align*}
|U^k_{n, i} - \bar{u}(t_n, x_i)| &= \left|2 \nu \log{\Phi^k_{n, i}} + \bar{u}(t_n, x_i)\right|\\
&\le 2 \nu \left|\log{\Phi^k_{n, i}} - \log{\phi_k(t_n, x_i)}\right| + \left|2\nu \log{\phi_k(t_n, x_i)} + \bar{u}(t_n, x_i)\right|
\end{align*}
and derive estimations for these two terms. 

From Theorems \ref{LP23_thm7} and \ref{LP23_thm8-2}, we get 
$$
\max_{(n, i)\in\Lambda_x} \left|2\nu \log{\phi_k(t_n, x_i)} + \bar{u}(t_n, x_i)\right| \le \|2\nu \log{\phi_k} + \bar{u}\|_{L^{\infty}(Q)} \le C \eps_k^{\frac{1}{r}} \le \frac{C}{(k+k_1)^{\frac{s}{r}}}.
$$
Thanks to the mean value theorem, for any $k, n$, and $i$, we have
$$
\left|\log{\Phi^k_{n, i}} - \log{\phi_k(t_n, x_i)}\right| = \frac{1}{P^k_{n, i}} |\Phi^k_{n, i}- \phi_k(t_n, x_i)|
$$
for some $P^k_{n, i}$ between $\phi_k(t_n, x_i)$ and $\Phi^k_{n, i}$. Since
$$
P^k_{n, i} \ge \min{\{\phi_{\min}, \Phi_{\min}\}} =: P_{\min} > 0,
$$
we deduce by Proposition \ref{I23_thm3.4}
$$
\left|\log{\Phi^k_{n, i}} - \log{\phi_k(t_n, x_i)}\right| \le \frac{1}{P_{\min}} |\Phi^k_{n, i} - \phi_k(t_n, x_i)| \le \frac{K_{\phi}(k)}{P_{\min}} \left[(\Delta t)^{\frac{\eta}{2}} + (\Delta x)^{\eta}\right]. 
$$
Therefore,
$$
\max_{n, i} \left|\log{\Phi^k_{n, i}} - \log{\phi_k(t_n, x_i)}\right| \le \frac{K_{\phi}(k)}{P_{\min}} \left[(\Delta t)^{\frac{\eta}{2}} + (\Delta x)^{\eta}\right].
$$
Summing up, we obtain \eqref{eq:err-1b}.  

\noindent We proceed to the proof of \eqref{eq:err-1a}. 
Before entering into the details, we introduce some useful notation. Set
\[
X=\{\bm{\phi}=\{\bm{\phi}_i(t)\}_{i\in\Lambda_x}:[0,T)\times \Lambda_x\to \mathbb{R}^{N_x^d}\mid 
\bm{\phi}_i(t)\in\mathcal{C}([t_n,t_{n+1})) \mbox{ for } n=0,1,\ldots,N_t-1\}
\]
and, for $\bm{\phi}\in X$, 
\[
\|\bm{\phi}\|=\left(\int_0^T \max_{i\in\Lambda}|\bm{\phi}_i(t)|^2~dt\right)^{\frac12}
\]
which defines a norm of $X$. Moreover, we introduce 
$\bar{\bm{M}}^k,
\bar{\bm{m}}^{k,\Delta t},
\bar{\bm{m}}^{\Delta t},
\bar{\bm{m}}^{k},
\bar{\bm{m}}\in X$ by setting 
\begin{align*}
\bar{\bm{M}}_i^k(t)&=\bar{M}_{n,i}^k\mbox{ for }t\in [t_n,t_{n+1}),\ n=0,1,\ldots,N_t,  \\
\bar{\bm{m}}_i^{k,\Delta t}(t)&=\bar{m}_k(t_n,x_i)\mbox{ for }t\in [t_n,t_{n+1}),\ n=0,1,\ldots,N_t,  \\
\bar{\bm{m}}_i^{\Delta t}(t)&=\bar{m}(t_n,x_i)\mbox{ for }t\in [t_n,t_{n+1}),\ n=0,1,\ldots,N_t,  \\
\bar{\bm{m}}_i^{k}(t)&=\bar{m}_k(t,x_i)\mbox{ for }t\in [0,T)  \\
\bar{\bm{m}}_i(t)&=\bar{m}(t,x_i)\mbox{ for }t\in [0,T).
\end{align*}

With this notation, we can estimate $I(\bar{M}^k,\bar{m})$ as  
\begin{align}
I(\bar{M}^k, \bar{m}) &= \|\bar{\bm{M}}^k-\bar{\bm{m}}^{\Delta t}\| \nonumber \\
&\le 
\|\bar{\bm{M}}^k-\bar{\bm{m}}^{k,\Delta t}\|
+\|\bar{\bm{m}}^{k,\Delta t}-\bar{\bm{m}}^{k}\|
+\|\bar{\bm{m}}^{k}-\bar{\bm{m}}\|
+\|\bar{\bm{m}}-\bar{\bm{m}}^{\Delta t}\|.\label{eq:proof-I}
\end{align}
By virtue of Proposition \ref{I23_thm3.4}, we have
\begin{align*}
\|\bar{\bm{M}}^k-\bar{\bm{m}}^{k,\Delta t}\|
& = \left(\sum_{n=0}^{N_t-1}\int_{t_n}^{t_{n+1}} \max_{i\in \Lambda_x}|M_{n,i}^k-\bar{m}_k(t_n,x_i)|^2~dt\right)^{\frac12}\\
&\le \sqrt{T} E(\bar{M}^k,\bar{m}_k) \\
& \le  \sqrt{T} K_{\bar{m}}(k) \left[(\Delta t)^{\frac{\eta}{2}} + (\Delta x)^{\eta}\right].
\end{align*}
In view of Propositions \ref{LP23_thm7} and \ref{LP23_thm8-2}, 
\[
 \|\bar{\bm{m}}^{k}-\bar{\bm{m}}\|\le \|\bar{m}_k-\bar{m}\|_{L^2(0,T;L^\infty(\T^d))}\le 
 C\eps^{\frac{1}{r}}\le \frac{C}{(k+k_1)^{\frac{s}{r}}}.
\]

To estimate the second and fourth terms of the right-hand side of \eqref{eq:proof-I}, we use the Lipschitz continuity of $\bar{m}$ and $\bar{m}_k$. Indeed, by Propositions \ref{B21_thm1} and \ref{regularity_phi_psi} (and the choice of $\bar{m}_0$), we have
\[
\|\bar{m}\|_{\mathcal{C}^{1+\eta/2,2+\eta}(Q)}\le C,\quad 
\|\bar{m}_k\|_{\mathcal{C}^{1+\eta'/2,2+\eta'}(Q)}\le C
\]
In particular, both functions are Lipschitz continuous in $t$, and their Lipschitz constants are bounded by the above norms.

The second term is estimated as
\begin{align*}
\|\bar{\bm{m}}^{k,\Delta t}-\bar{\bm{m}}^{k}\| 
& = \left(\sum_{n=0}^{N_t-1}\int_{t_n}^{t_{n+1}} \max_{i\in \Lambda_x}|\bar{m}_k(t_n,x_i)-\bar{m}_k(t,x_i)|^2~dt\right)^{\frac12}\\
& = \left(\sum_{n=0}^{N_t-1}\int_{t_n}^{t_{n+1}} C|t_n-t|^2~dt\right)^{\frac12}\le C \sqrt{T} \Delta t.
\end{align*}
In exactly the same way, 
\[
\|\bar{\bm{m}}-\bar{\bm{m}}^{\Delta t}\|\le C\sqrt{T}\Delta t. 
\]

Summing up these estimates, we complete the proof of \eqref{eq:err-1a}. 

\end{proof}

\begin{proof}[Proof of Theorems \ref{I23_thm3.5a} and \ref{I23_thm3.5b}]
They follow the same lines as that of Theorem~\ref{I23_thm3.5}, with the only modification being that the inequalities taken from Propositions~\ref{I23_prop4.4,4.6} and Proposition~\ref{I23_thm3.4} are replaced by the corresponding ones.
\end{proof}


\section{Numerical Experiments}
\label{sec:numerical_experiment}

In this section, we provide some numerical examples and verify the validity of Theorem \ref{I23_thm3.5}. Throughout this section, numerical computations are carried out in Python on a laptop with an AMD Ryzen AI 9 365 processor (10 cores) and 32GB RAM.

\subsection{One-dimensional Case}

\begin{example}
\label{ex:1D}
Letting $d=1$, we set as:  
\begin{alignat*}{2}
\mbox{$T, \nu$ and Hamiltonian:} &&\quad&  T = 0.1, \quad \nu = 0.01, \quad H(t, x, p) = \frac{1}{2}|p|^2;\\
\mbox{terminal condition:} &&&g(x) = -\frac{1}{2 \pi} \cos{(2 \pi x)}; \\
\mbox{initial condition:}&&&m_0(x) = \frac{1}{\sqrt{2 \pi \sigma^2}} \exp{\left(-\frac{(x-1/2)^2}{2 \sigma^2}\right)} \quad (\sigma = 0.1);\\
\mbox{coupling term:}&&&f(t,x, m) = \left(x-\frac{1}{2}\right)^2 + 4 \min{\{m(t,x), 5\}}.
\end{alignat*}
\end{example}

In the setting of Example \ref{ex:1D}, the solution of \eqref{MFG_eq} exhibits the following behavior. 
At the initial time \( t = 0 \), the density distribution is given by a normal distribution with mean \( 1/2 \) and variance \( \sigma^2 \).
At the terminal time \( t = T \), the control input is prescribed as
\[
-\nabla_p H(T,x,g') = -\sin(2\pi x),
\]
and consequently the density distribution evolves so as to disperse toward the endpoints \( x=0 \) and \( x=1 \) of the interval \([0,1]\).
As for the coupling term, the first term \( (x - 1/2)^2 \) promotes concentration of the density around \( x = 1/2 \).
The second term \( \min\{m(x), 5\} \) serves to mitigate congestion effects. 
The time evolution of the density distribution and the control input is illustrated in Figure~\ref{figure1}.

\begin{figure}[ht]
\centering
{\includegraphics[height=4cm]{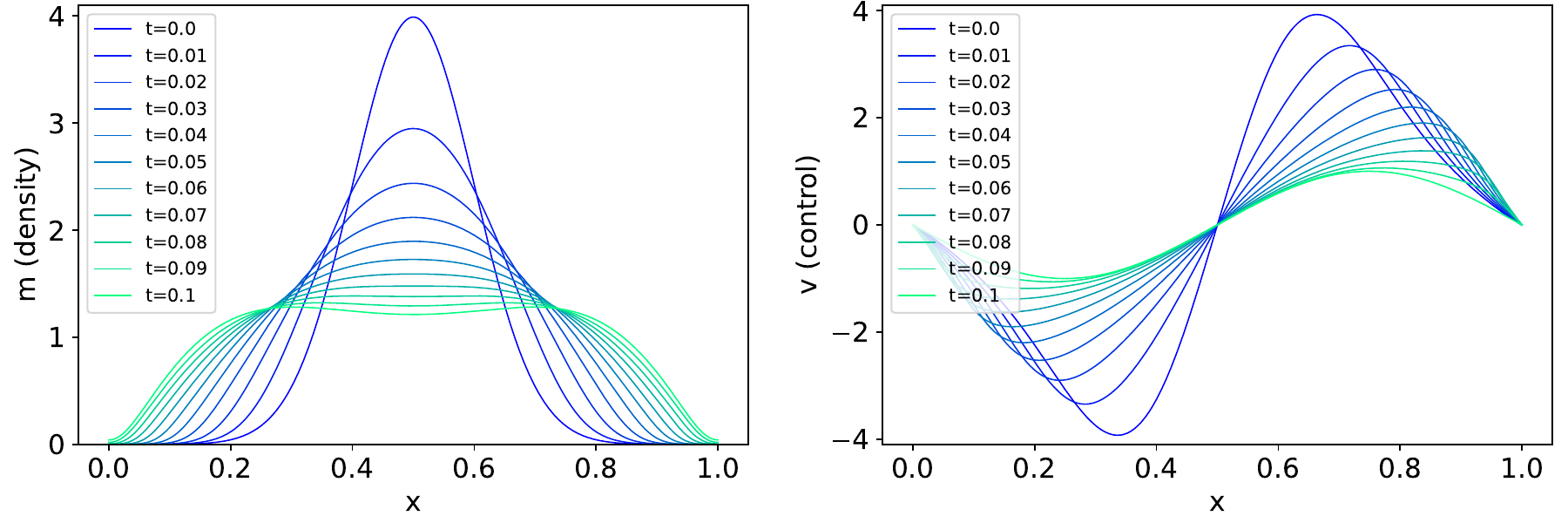}}
\vspace{-8pt}
\caption{
Left panel: density distribution; right panel: control input (Example~\ref{ex:1D}).}
\label{figure1}
\end{figure}

We now seek to validate, by means of numerical experiments, the explicit convergence-rate estimates derived in Theorem \ref{I23_thm3.5}. Since an exact solution to the MFG system \eqref{MFG_eq} is not available, we employ as a reference solution a numerically computed solution obtained using a sufficiently fine mesh and a reasonably large number of iterations, which serves as a substitute for the exact solution. 
More precisely, the reference solution $\bar{m}=\bar{m}_{\textup{ref}}$ is defined as the numerical solution $\bar{M}^k$ computed with
$N_t = 5760$, $N_x = 1200$, $\delta_k = 10/(k+10)$, and a sufficiently large iteration number $k$.

We first choose $\delta_k = 1/(k+1)$ and carry out the iteration up to $k = 1000$. Under this setting, we expect the following relation to hold according to Theorem \ref{I23_thm3.5}; 
$$
I(\bar{M}^k, \bar{m}) \le C_{\bar{m}}^{(1)}(k) \left[(\Delta t)^{\frac{\eta}{2}} + (\Delta x)^{\eta}\right] + \frac{C_{\bar{m}}^{(2)}}{\sqrt{k+1}} \approx C_{\bar{m}}^{(1)}(k) \left[(\Delta t)^{\frac{\eta}{2}} + (\Delta x)^{\eta}\right].
$$

\begin{figure}[ht]
\centering
{\includegraphics[height=4.5cm]{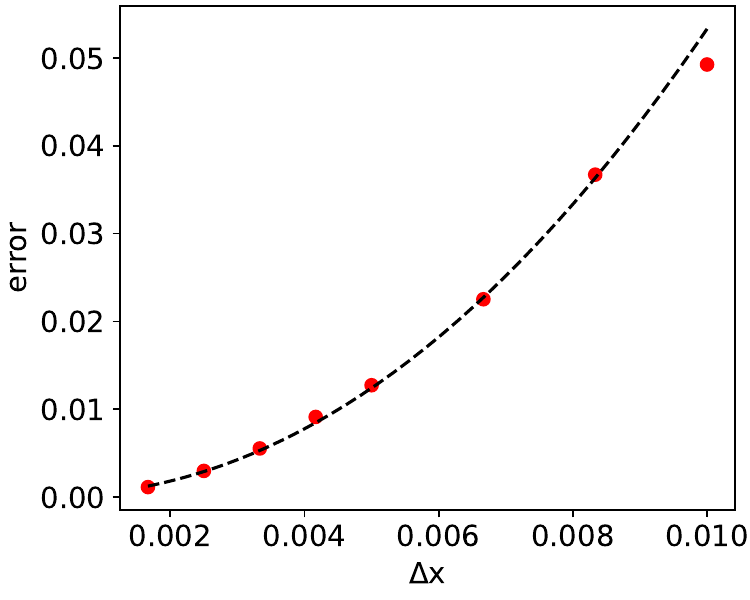}}
\caption{Values of $I(\bar{M}^k, \bar{m})$ for different values of $\Delta x$ (red points) and the corresponding regression curve (Example~\ref{ex:1D}).}
\label{figure2}
\end{figure}

We compute the numerical solutions $\bar{M}^k$ for various choices of $\Delta t$ and $\Delta x$, where the numbers of time and space subdivisions are selected so that $2\nu\Delta t/(\Delta x)^2x = 1/2$.
Figure~\ref{figure2} shows that the observed convergence rate is approximately
\[
I(\bar{M}^k, \bar{m}) = \mathcal{O}\bigl((\Delta x)^{2.103}\bigr).
\]
(The convergence rate was estimated by linear regression on $\log(\Delta x)$ and $\log I(\bar{M}^k, \bar{m})$ using \texttt{LinearRegression} from the \texttt{sklearn.linear\_model} library.) 
While the theoretical analysis yields $\eta \in (0,1)$, this numerical result indicates a higher order of convergence.
Indeed, as stated in Theorems~\ref{I23_thm3.5a} and \ref{I23_thm3.5b}, the above result also suggests higher regularity of $\phi_k$ and $\psi_k$.

Next, the step-size is updated according to $\delta_k = k_2/(k + k_1)$, and we investigate how the convergence behavior depends on the choice of the parameters $k_1$ and $k_2$.
For this purpose, we fix $N_t = 1000$ and $N_x = 500$, and define the reference solution $\bar{m} = \bar{m}_{\textup{ref}}$ as the numerical solution obtained with $\delta_k = 10/(k+10)$ and a sufficiently large number of iterations.
We then compute the numerical solution $\bar{M}^k$ using the same values of $N_t$ and $N_x$, but with the step-size $\delta_k = k_2/(k + k_1)$.

According to Theorem~\ref{I23_thm3.5}, we expect that
\[
I(\bar{M}^k, \bar{m})
\le
C_{\bar{m}}^{(1)}(k)
\left[
(\Delta t)^{\frac{\eta}{2}} + (\Delta x)^{\eta}
\right]
+
\frac{C_{\bar{m}}^{(2)}}{(k + k_1)^{\frac{k_2}{2}}}
\;\approx\;
\frac{C_{\bar{m}}^{(2)}}{(k + k_1)^{\frac{k_2}{2}}}.
\]
Therefore, by computing the values of $I(\bar{M}^k, \bar{m})$ for various choices of the step-size $\delta_k = k_2/(k + k_1)$ and iteration number $k$, we numerically estimate the convergence order $k_2/2$.

Figure~\ref{figure5} displays the values of $I(\bar{M}^k, \bar{m})$ for various choices of the step size $\delta_k = k_2/(k + k_1)$.
The approximate convergence orders for each case are reported in Table~\ref{table1}.
From Table~\ref{table1}, we observe that
\[
I(\bar{M}^k, \bar{m}) \approx \mathcal{O}\bigl((k + k_1)^{-k_2}\bigr),
\]
which is consistent with the analytical results.
However, the observed convergence orders are twice as high as the theoretically expected rate
$\mathcal{O}\bigl((k + k_1)^{-k_2/2}\bigr)$.

This discrepancy suggests that there may be room for improvement in the proof of Theorem~\ref{LP23_thm8-2}.
Indeed, the arguments presented in \cite{NS26} for this theorem rely on several simplifying assumptions introduced for computational convenience.
A more refined and rigorous analysis may lead to an estimate that better aligns with the numerical results.

\begin{figure}[ht]
\centering
{\includegraphics[height=8cm]{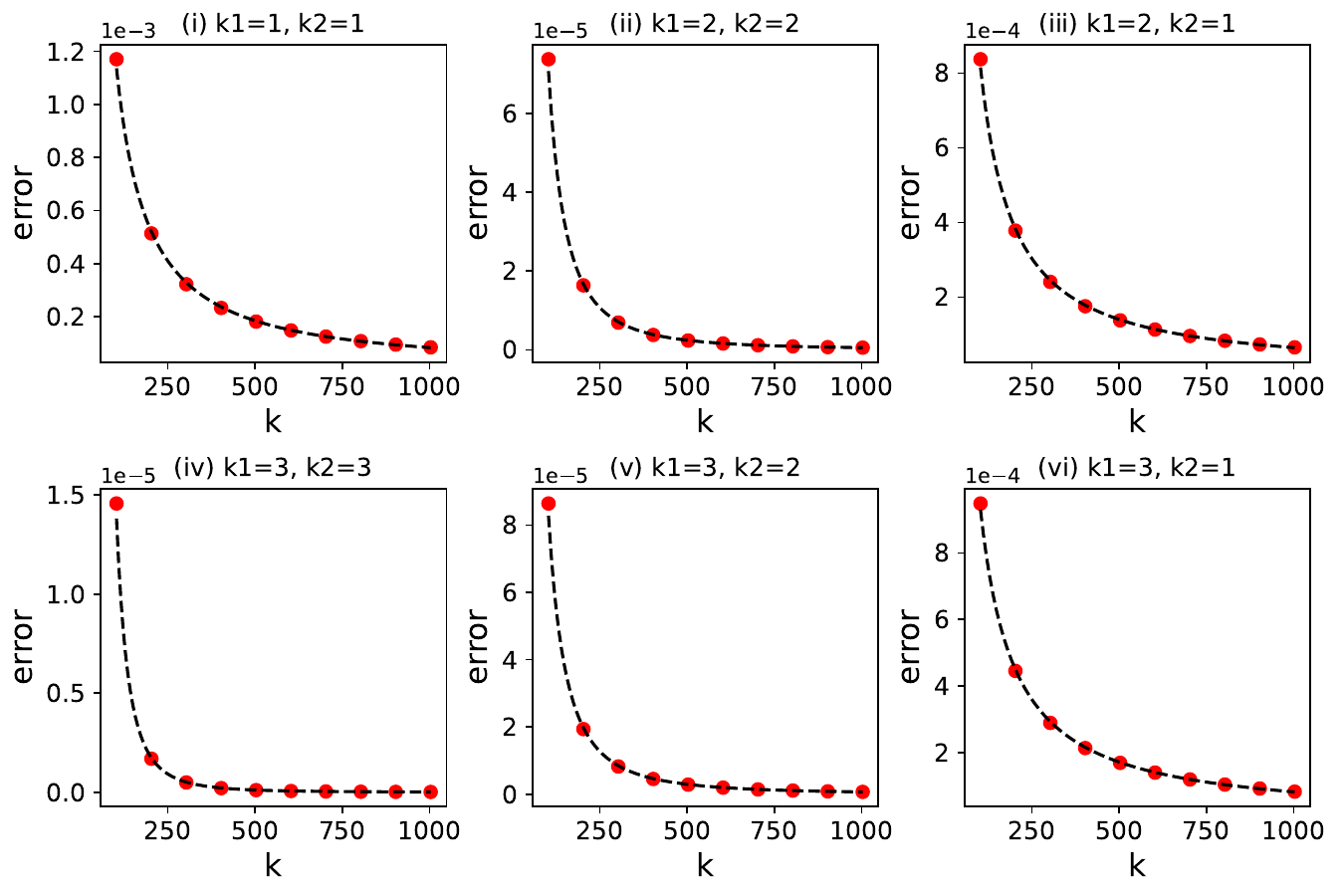}}
\caption{Computed values of $I(\bar{M}^k, \bar{m})$ for different step sizes $\delta_k = k_2/(k + k_1)$ and iteration indices $k$ (red points), along with the corresponding regression curves (Example~\ref{ex:1D}).}
\label{figure5}
\end{figure}

\begin{table}[t]
\caption{Estimated convergence orders corresponding to each step-size (Example~\ref{ex:1D}).}
\centering
\label{table1}
\begin{tabular}{c||c|c|c} \hline
 $\delta_k=\frac{k_2}{k+k_1}$ & $k_1=1, k_2=1$ & $k_1=2, k_2=2$ & $k_1=2, k_2=1$ \\ \hline
 $I(\bar{M}^k, \bar{m})$ & $\mathcal{O}\left((k+1)^{-1.142}\right)$ & $\mathcal{O}\left((k+2)^{-2.121}\right)$ & $\mathcal{O}\left((k+2)^{-1.110}\right)$ \\ \hline
\end{tabular}

\begin{tabular}{c||c|c|c} \hline
 $\delta_k=\frac{k_2}{k+k_1}$ & $k_1=3, k_2=3$ & $k_1=3, k_2=2$ & $k_1=3, k_2=1$\\ \hline
 $I(\bar{M}^k, \bar{m})$ & $\mathcal{O}\left((k+3)^{-3.056}\right)$ & $\mathcal{O}\left((k+3)^{-2.111}\right)$ & $\mathcal{O}\left((k+3)^{-1.071}\right)$ \\ \hline
\end{tabular}
\end{table}

We now consider the case in which a nontrivial advection effect appears in the Hamiltonian as given in Example \ref{ex:1Db}.

\begin{example}
\label{ex:1Db}
The Hamiltonian is $\displaystyle{H(t,x,p) = \frac{|p|^2}{2} - p}$. 
The remaining is the same as in Example~\ref{ex:1D}.
\end{example}

In the setting of Example \ref{ex:1Db}, the convergence rate parameter satisfies at most $\eta = 1$.
This limitation in the error estimate stems from the discretization error associated with the upwind difference approximation. 
We verify this limitation numerically by modifying only the Hamiltonian in the current setting as above and performing the same numerical experiment as before.
The time evolution of the density distribution and the control input is shown in Figure~\ref{figure3}.

\begin{figure}[ht]
\centering
{\includegraphics[height=4cm]{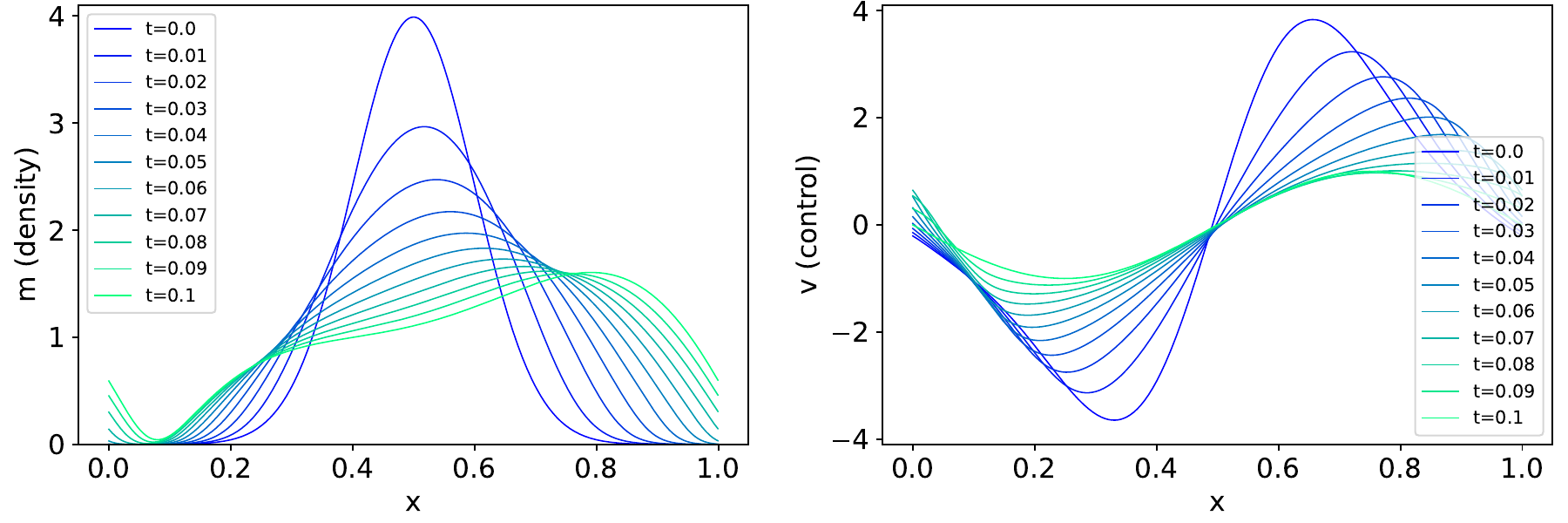}}
\vspace{-8pt}
\caption{Left panel: density distribution; right panel: control input (Example~\ref{ex:1Db}).}
\label{figure3}
\end{figure}

\begin{figure}[ht]
\centering
{\includegraphics[height=4.5cm]{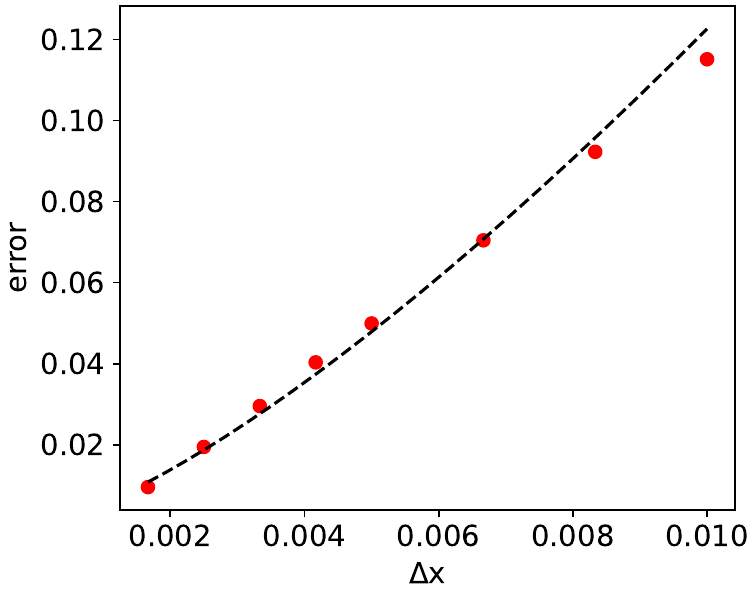}}
\caption{
Values of $I(\bar{M}^k, \bar{m})$ for different values of $\Delta x$ (red points) and the corresponding regression curve (Example~\ref{ex:1Db}).}
\label{figure4}
\end{figure}

Figure~\ref{figure4} shows that the observed convergence rate is approximately
\[
I(\bar{M}^k, \bar{m}) = \mathcal{O}\bigl((\Delta x)^{1.356}\bigr),
\]
which supports the above explanation.
For comparison, similar numerical experiments reported in \cite[Section~5]{I23} yielded convergence rates of approximately
$\mathcal{O}\bigl((\Delta x)^{1.04}\bigr)$ and $\mathcal{O}\bigl((\Delta x)^{1.33}\bigr)$.
Note, however, that in \cite{I23} the norm
\[
\|\bar{M}^k - \bar{m}\|_{\infty}
= \sup_{n,i} \lvert \bar{M}^k_{n,i} - \bar{m}(t_n, x_i) \rvert
\]
is used instead of $I(\bar{M}^k, \bar{m})$.

\subsection{Two-dimensional Case}

\begin{example}
\label{ex:2D}
Letting $d=2$, we set
\begin{alignat*}{2}
\text{$T, \nu$ and Hamiltonian}: &&\quad& \qquad T = 0.1, \quad \nu = 0.01, \quad H(t, x, p) = \frac{1}{2}|p|^2;\\
\text{terminal condition}: &&& \qquad g(x) = -\frac{1}{2 \pi} \left[\cos{(2 \pi x_1)}+\cos{(2 \pi x_2)}\right];\\
\text{initial condition}: &&& \qquad m_0(x) = \frac{1}{2 \pi \sigma^2} \exp{\left(-\frac{|x-x_c|^2}{2 \sigma^2}\right)} \quad (\sigma = 0.25);\\
\text{coupling term}: &&& \qquad f(t,x, m) = |x-x_c|^2 + 4 \min{\{m(t,x), 5\}},
\end{alignat*}
where $x = (x_1, x_2)\in \T^2$ and $x_c = (1/2, 1/2)$. 
\end{example}

In the setting of Example \ref{ex:1D}, the time evolution of the density distribution is shown in Figure \ref{figure6}, and the control input is shown in Figure \ref{figure7}. (Since the control remains almost unchanged due to the short time scale, we plot it only at time $t = T$.)

\begin{figure}[ht]
\centering
{\includegraphics[height=7cm]{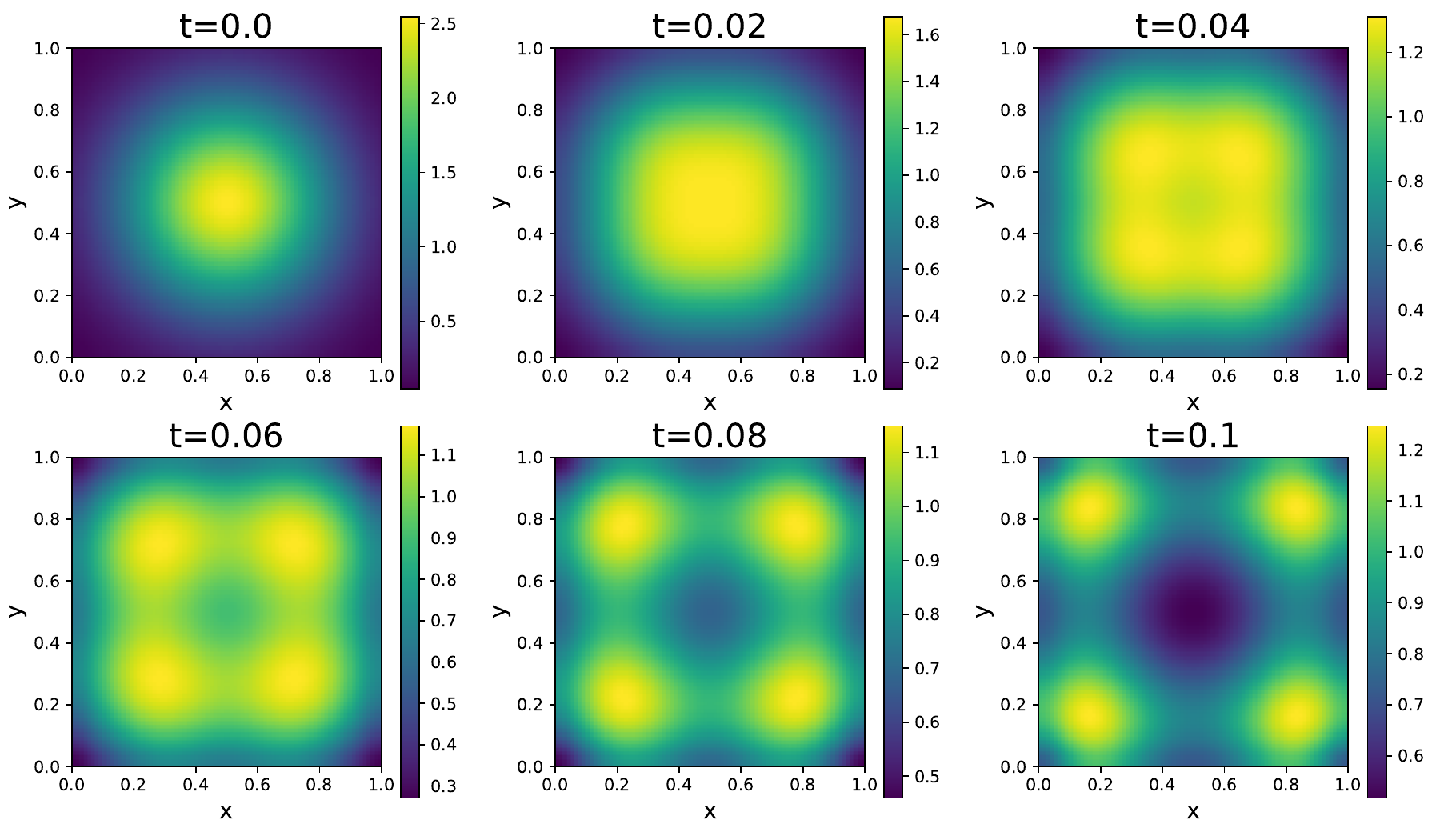}}
\caption{Density distribution $m$ (Example \ref{ex:2D}).}
\label{figure6}
\end{figure}

\begin{figure}[ht]
\centering
{\includegraphics[height=5cm]{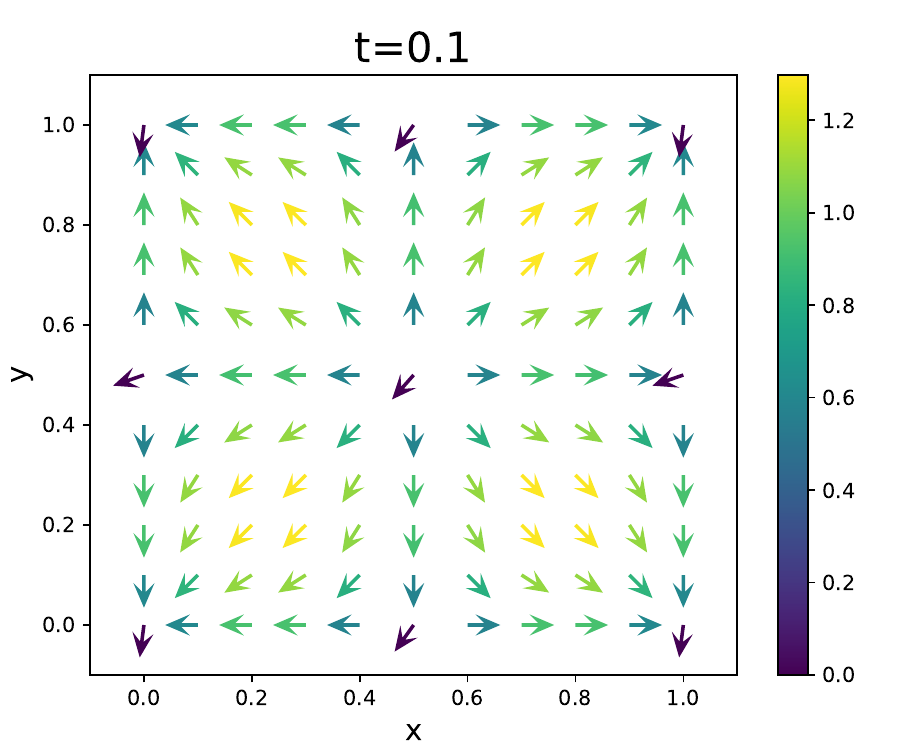}}
\caption{Vector field of the control input at time $t = T$ (the scale represents the norm of the vector) (Example \ref{ex:2D}).}
\label{figure7}
\end{figure}

First, as in the one-dimensional case, we examine the convergence order with respect to the discretization parameters.
We define the reference solution $\bar{m}=\bar{m}_{\textup{ref}}$ as the numerical solution obtained with
$N_t = 400$, $N_x = 200$, $\delta_k = 10/(k+10)$, and a sufficiently large number of iterations.
We then compute the numerical solution $\bar{M}^k$ using $\delta_k = 1/(k+1)$ and $k = 1000$.
As explained in the one-dimensional case, we expect that
\[
I(\bar{M}^k, \bar{m})
\approx
C_{\bar{m}}^{(1)}(k)
\left[
(\Delta t)^{\frac{\eta}{2}} + (\Delta x)^{\eta}
\right].
\]

\begin{figure}[ht]
\centering
{\includegraphics[height=4.5cm]{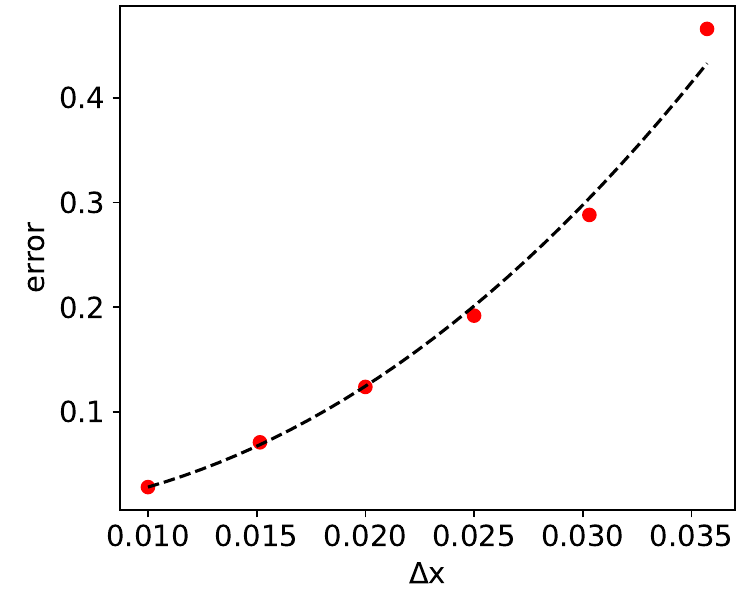}}
\caption{Values of $I(\bar{M}^k, \bar{m})$ for different values of $\Delta x$ (red points) and the corresponding regression curve (Example \ref{ex:2D}).}
\label{figure8}
\end{figure}

We compute the numerical solutions $\bar{M}^k$ for various values of $\Delta t$ and $\Delta x$, where the numbers of time and space subdivisions are chosen so that $4\nu\Delta t/(\Delta x)^2 \approx 2/5$. 
Due to implementation-related issues and computational cost, we employ a linear interpolation of the reference solution. (We use \texttt{RegularGridInterpolator} from the \texttt{scipy.interpolate} library.)
Figure~\ref{figure8} shows that the observed convergence rate is
\[
I(\bar{M}^k, \bar{m}) = \mathcal{O}\bigl((\Delta x)^{2.151}\bigr).
\]
As in the one-dimensional case, this result suggests higher regularity of $\phi_k$ and $\psi_k$.

Next, we examine the convergence order with respect to the iteration index $k$.
We fix $N_t = 64$ and $N_x = 80$, and define the reference solution $\bar{m}$ as the numerical solution obtained with $\delta_k = 10/(k+10)$ and a sufficiently large number of iterations.
We then compute the numerical solution $\bar{M}^k$ using the same values of $N_t$ and $N_x$, but with the step size $\delta_k = k_2/(k + k_1)$.
In this case, we expect that
\[
I(\bar{M}^k, \bar{m})
\approx
\frac{C_{\bar{m}}^{(2)}}{(k + k_1)^{\frac{s}{r}}},
\]
where $r > 2$ is a constant and $s$ satisfies $s < \min\{k_2, 1/\rho\}$, with $
\rho = 1 - 2/(r(r-1))$.

\begin{figure}[ht]
\centering
{\includegraphics[height=8cm]{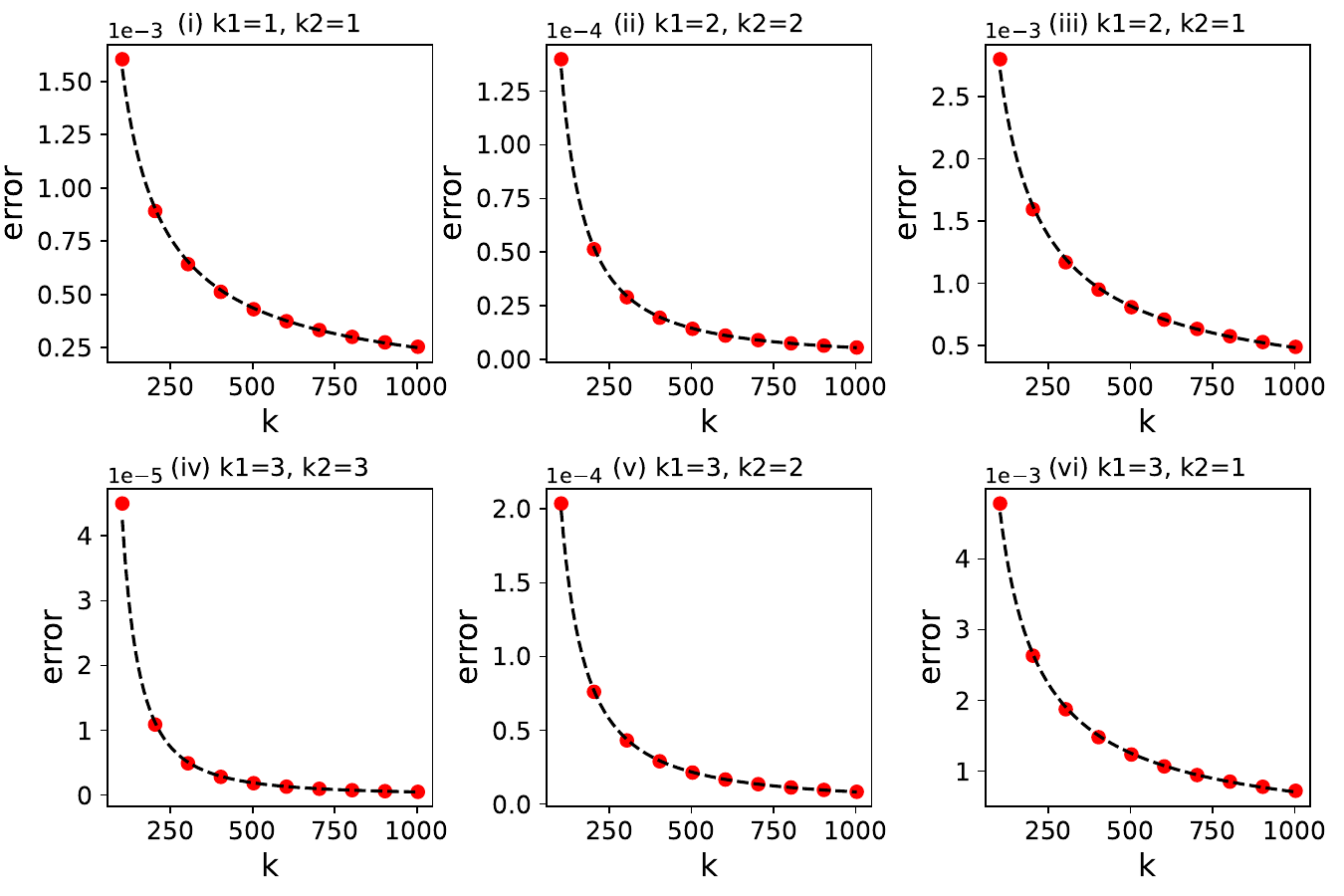}}
\caption{Computed values of $I(\bar{M}^k, \bar{m})$ for different step sizes $\delta_k = k_2/(k + k_1)$ and iteration indices $k$ (red points), along with the corresponding regression curves (Example \ref{ex:2D}).}
\label{figure9}
\end{figure}

Figure~\ref{figure9} presents the values of $I(\bar{M}^k, \bar{m})$ for various choices of the step size $\delta_k = k_2/(k + k_1)$.
The approximate convergence orders for each case are reported in Table~\ref{table2}.
Since the supremum of the expected order $s/r$ is attained when $r = 2$ and $s = k_2$, these numerical results are consistent with Theorem~\ref{I23_thm3.5}.
However, the estimated convergence orders are slightly lower than those observed in the one-dimensional case.
This observation indicates that the convergence rate of the GCG method differs between the one-dimensional and two-dimensional settings, as stated in Theorem~\ref{LP23_thm8-2}.

\begin{table}[t]
\caption{Estimated convergence orders corresponding to each step-size (Example \ref{ex:2D}).}
\centering
\label{table2}
\begin{tabular}{c||c|c|c} \hline
 $\delta_k=\frac{k_2}{k+k_1}$ & $k_1=1, k_2=1$ & $k_1=2, k_2=2$ & $k_1=2, k_2=1$ \\ \hline
 $I(\bar{M}^k, \bar{m})$ & $\mathcal{O}\left((k+1)^{-0.798}\right)$ & $\mathcal{O}\left((k+2)^{-1.408}\right)$ & $\mathcal{O}\left((k+2)^{-0.757}\right)$ \\ \hline
\end{tabular}

\begin{tabular}{c||c|c|c} \hline
 $\delta_k=\frac{k_2}{k+k_1}$ & $k_1=3, k_2=3$ & $k_1=3, k_2=2$ & $k_1=3, k_2=1$\\ \hline
 $I(\bar{M}^k, \bar{m})$ & $\mathcal{O}\left((k+3)^{-1.966}\right)$ & $\mathcal{O}\left((k+3)^{-1.403}\right)$ & $\mathcal{O}\left((k+3)^{-0.829}\right)$ \\ \hline
\end{tabular}
\end{table}

\section{The case of the global coupling terms}
\label{sec:gct}

Lavigne and Pfeiffer \cite{LP23} studied the GCG method under the following condition on the coupling term $f$: 
\begin{description} 
\item[(f-L2)] There exist constants $C_0$, $L_f > 0$ and $\alpha_0 \in (0, 1)$ satisfying: 
\begin{equation*}
|f(t_2, x_2, m_2) - f(t_1, x_1, m_1)| \le C_0 \left(
|t_2-t_1|^{\alpha_0} +|x_2-x_1|\right)+ L_f \|m_2 - m_1\|_{L^{2}(\T^d)}
\end{equation*}
for $(t_1, x_1),(t_2, x_2) \in Q$, and $m_1,m_2 \in \mathcal{D}_0(\T^d)$. 
\end{description}

An example of $f$ satisfying (f-B), (f-L2), (f-M), and (f-P) can be found in  \cite[Lemma 2]{B21}; see also \cite[Example 1.1]{MR3127145} for a related construction.

The quantity $r$ appearing \eqref{eq:err-1} of Theorem \ref{I23_thm3.5} and \eqref{eq:err-2} of Theorem \ref{I23_thm3.5a} depends on the spatial dimension $d$. In particular, $r$ should be strictly greater than $d$. 
However, if assuming \textup{(f-L2)} instead of the original condition \textup{(f-L)}, we can take $r=2$ in those estimates. 

To show this, we now reorganize our argument in a step-by-step manner under this assumption. 

\begin{itemize}
    \item 
    Propositions~\ref{B21_thm1} and~\ref{LP23_prop23} are identical to \cite[Theorem~3]{LP23} and \cite[Proposition~23]{LP23}, respectively, and can be used without modification.

    \item It is proved in \cite[Theorem 23]{LP23} that 
\begin{equation*}
\|\bar{m}_{k}-\bar{m}\|_{{L^\infty(0, T; L^2(\T^d))}}
+\|u_k-\bar{u}\|_{L^{\infty}(Q)} 
+\|\gamma_k - \bar{\gamma}\|_{L^{\infty}(Q)} 
\le C \eps_k^{{\frac{1}{2}}}.    
\end{equation*}
This inequality should be compared with \eqref{eq:thm7} of Proposition \ref{LP23_thm7}. In fact, $\bar{m}_{k}-\bar{m}$ and $\gamma_k - \bar{\gamma}$ are estimated using the norms of $L^\infty(0,T;L^2(\T^d))$ and $L^\infty(Q)$ instead of $L^2(0,T;L^\infty(\T^d))$ and $L^2(0,T;L^\infty(\T^d))$, respectively. 
This affects the way the error is measured in Theorem \ref{I23_thm3.5z}.

\item 
Proposition~\ref{LP23_thm8-2} also holds in this case.
In particular, a careful examination of the proof of \cite[Theorem~3.8]{NS26}
shows that, for any spatial dimension \(d\ge 1\), one may take
$s = k_2$ and $N = \varepsilon_0 k_1^{k_2}
\exp\!\left(\frac{C_{\ast} k_2^2 (k_1+1)}{k_1^2}\right)$.

    \item The inequalities \eqref{eq:e51} and \eqref{eq:e52} in Proposition~\ref{I23_prop4.4,4.6} can be used without modification.
Indeed, in the proof, assumption \textup{(f-L)} is used only in \eqref{ineq_estimate_r4_1}, which remains valid under assumption \textup{(f-L2)} as well.
Proposition~\ref{I23_thm3.4} can also be applied without any change.
\end{itemize}

Taking the above remarks into account, the proof of Theorem \ref{I23_thm3.5} can be adapted to obtain the following result, stated as Theorem~\ref{I23_thm3.5z}. We set 
\begin{equation*}
\tilde{I}(\phi,\psi) := \max_{0\le n\le N_T-1}
\left[
\sum_{i\in\Lambda_x} |\phi_{n,i} - \psi(t_n,x_i)|^2 \Delta x\right]^{\frac{1}{2}}
\end{equation*}
for a grid function $\phi_{n,i}$ in $\Lambda$ and  continuous function $\psi$ in $Q$. 

\begin{theorem} 
\label{I23_thm3.5z}
Suppose that 
\textup{(H)}, 
\textup{(f-B)}, \textup{(f-M)}, \textup{(f-L2)}, \textup{(f-P)}, and \textup{(TIV)} are satisfied. 
The initial guess $\bar{m}_0$ is the function specified in Proposition~\ref{prop:initial}.  
The step-size $\delta_k$ is chosen as \eqref{eq:predefine}. 
Then, there exist a constant $\eta\in (0,1)$, functions $C_{\mathrm{q}}^{(1)}(k) > 0$ ($\mathrm{q} = \bar{m}, \bar{u}$) of $k$, and constants $C_{\mathrm{q}}^{(2)} > 0$ ($\mathrm{q} = \bar{m}, \bar{u}$) such that
\begin{align*}
\tilde{I}(\bar{M}^k, \bar{m}) &\le C_{\bar{m}}^{(1)}(k) \left[(\Delta t)^{\frac{\eta}{2}} + (\Delta x)^{\eta}\right] + \frac{C_{\bar{m}}^{(2)}}{(k+k_1)^{k_2/2}}, \\
E(U^k,\bar{u}) &\le C_{\bar{u}}^{(1)}(k) \left[(\Delta t)^{\frac{\eta}{2}} + (\Delta x)^{\eta}\right] + \frac{C_{\bar{u}}^{(2)}}{(k+k_1)^{k_2/2}}.
\end{align*}
The asymptotic growth of $C_{\mathrm{q}}^{(1)}(k)$ ($\mathrm{q} = \bar{m}, \bar{u}$) 
are polynomially bounded from above and logarithmically bounded from below as $k\to\infty$. Furthermore, if $\phi_k$ and $\psi_k$ have the regularity property 
\eqref{eq:C24}, $(\Delta t)^{\frac{\eta}{2}} + (\Delta x)^{\eta}$ can be replaced by $\Delta t+\Delta x$. 
\end{theorem}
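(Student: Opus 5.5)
The plan mirrors the proof of Theorem~\ref{I23_thm3.5}, with the iteration-error input replaced by the $L^\infty(0,T;L^2(\T^d))$ estimate of \cite{LP23} recalled above, and with $\tilde I$ in place of $I$.

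\emph{Estimate for $U^k$.} As before, split
\[
|U^k_{n,i}-\bar u(t_n,x_i)|\le 2\nu|\log\Phi^k_{n,i}-\log\phi_k(t_n,x_i)|+|u_k(t_n,x_i)-\bar u(t_n,x_i)|.
\]
For the first term, use the mean value theorem with the lower bound $P_{\min}=\min\{\phi_{\min},\Phi_{\min}\}$ (Theorem~\ref{thm:DMP} and the bound on $\phi_k$). Then invoke Proposition~\ref{I23_thm3.4}, which by the bullet list remains valid under \textup{(f-L2)}: the only use of the Lipschitz assumption is \eqref{ineq_estimate_r4_1}, and $\|\cdot\|_{L^2(\T^d)}\le\|\cdot\|_{L^\infty(\T^d)}$ on the unit torus. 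This gives the bound $K_\phi(k)P_{\min}^{-1}[(\Delta t)^{\eta/2}+(\Delta x)^\eta]$. For the second term, use $\|u_k-\bar u\|_{L^\infty(Q)}\le C\eps_k^{1/2}$ together with $\eps_k\le N(k+k_1)^{-k_2}$, which holds in all dimensions $d\ge1$ as noted above. This yields $C(k+k_1)^{-k_2/2}$.

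\emph{Estimate for $\bar M^k$.} Fix $n$ and use the discrete norm $\|v\|_{n}:=(\sum_i|v_i|^2\Delta x^d)^{1/2}$. (I read $\Delta x$ in $\tilde I$ as the cell volume $(\Delta x)^d$, or else take $d=1$.) Split
\[
\|\bar M^k_{n,\cdot}-\bar m(t_n,x_\cdot)\|_n\le\|\bar M^k_{n,\cdot}-\bar m_k(t_n,x_\cdot)\|_n+\|\bar m_k(t_n,x_\cdot)-\bar m(t_n,x_\cdot)\|_n.
\]
The first term is at most $E(\bar M^k,\bar m_k)$, since the total cell volume is $1$. Proposition~\ref{I23_thm3.4} then bounds it by $K_{\bar m}(k)[(\Delta t)^{\eta/2}+(\Delta x)^\eta]$.

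For the second term, set $w=\bar m_k(t_n,\cdot)-\bar m(t_n,\cdot)$ and compare the Riemann sum with the integral. Write $\sum_i|w(x_i)|^2\Delta x^d=\int_{\T^d}|w_\Delta|^2dx$, where $w_\Delta$ is the piecewise-constant interpolant of $w$ on the cells of \eqref{eq:dGCG1}. Then
\[
\|w_\Delta\|_{L^2}\le\|w\|_{L^2(\T^d)}+\|w-w_\Delta\|_{L^\infty}\le\|w\|_{L^2(\T^d)}+C\Delta x\,(\|\nabla\bar m_k\|_{L^\infty}+\|\nabla\bar m\|_{L^\infty}).
\]
The gradients are bounded uniformly in $k$ by Propositions~\ref{LP23_prop23}, \ref{B21_thm1} and Lemma~\ref{B21_lem12}. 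The term $\|w\|_{L^2(\T^d)}$ is controlled, uniformly in $t_n$, by $\|\bar m_k-\bar m\|_{L^\infty(0,T;L^2)}\le C\eps_k^{1/2}\le C(k+k_1)^{-k_2/2}$. The extra $C\Delta x$ is absorbed into $C^{(1)}_{\bar m}(k)(\Delta x)^\eta$, since $\Delta x\le(\Delta x)^\eta$ for $\Delta x\le1$ and $K_{\bar m}(k)$ is bounded below. Taking the maximum over $n$ gives the bound for $\tilde I$. Here, unlike in \eqref{eq:proof-I}, no time-interpolation terms appear, because $\tilde I$ is a maximum over grid times.

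\emph{Growth of the constants and the improved rate.} The growth of $C^{(1)}_{\mathrm q}(k)$ follows from the bounds for $K_{\bar m}$, $K_\phi$ in Proposition~\ref{I23_thm3.4}: polynomial from above and logarithmic from below. Under \eqref{eq:C24}, use the $\Delta t+\Delta x$ version of Proposition~\ref{I23_thm3.4}; the $C\Delta x$ Riemann-sum term is already of that order. The only genuinely delicate point is verifying that the dimension-free choice $s=k_2$ in Proposition~\ref{LP23_thm8-2} really holds under \textup{(f-L2)}. I would rely on the stated re-examination of \cite[Theorem~3.8]{NS26}, where the exponent $2/(r(r-1))$ with $r=2$ becomes $1$ and the recursion reduces to the one-dimensional case.
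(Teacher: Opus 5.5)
Your proposal is correct and follows the paper's route. The paper proves this theorem only through the bullet-list adaptation of the proof of Theorem~\ref{I23_thm3.5}: Propositions~\ref{I23_prop4.4,4.6} and~\ref{I23_thm3.4} are used unchanged, together with the $\eps_k^{1/2}$ estimate of \cite{LP23} and $s=k_2$ in every dimension. Your fixed-$n$ Riemann-sum comparison for $\tilde I$ spells out the step the paper leaves implicit, and your reading of $\Delta x$ in $\tilde I$ as the cell volume $(\Delta x)^d$ is the one the statement needs for $d\ge 2$.

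One minor correction: $K_{\bar m}(0)=0$, so you cannot absorb the extra $C\Delta x$ term by appealing to a lower bound on $K_{\bar m}$. Instead take $C^{(1)}_{\bar m}(k)=K_{\bar m}(k)+C$, which keeps the polynomial/logarithmic growth.
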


\begin{rem}
The convergence order $(\Delta t)^{\frac{\eta}{2}} + (\Delta x)^{\eta}$ cannot be replaced by $\Delta t+(\Delta x)^2$, since \eqref{ineq_estimate_r4_2} does not follow under \textup{(f-L2)}. 
\end{rem}

\begin{rem}
It should be kept in mind that the local coupling term \eqref{eq:ex-f} does not satisfy \textup{(f-L2)}. 
\end{rem}

\section{Concluding remarks}
\label{sec:remarks}

\begin{enumerate}
\item \emph{Summary.} In this paper, we studied the GCG method for the MFG system at the fully discrete level.
By reformulating the GCG method via the Cole--Hopf transformation and discretizing the resulting system with finite difference schemes, we derived a numerical method that preserves essential qualitative properties such as a discrete maximum principle. Our main contribution is the derivation of explicit convergence estimates that simultaneously quantify the discretization error and the iteration error. In particular, we showed that the total error can be decomposed into a mesh-dependent term and an iteration-dependent term, and we clarified how these two sources of error interact. The resulting convergence rates are explicit with respect to the time step, the spatial mesh size, and the iteration number.
We also demonstrated that, under additional regularity assumptions, higher-order convergence rates with respect to the discretization parameters can be achieved.
Although the convergence with respect to the iteration is not uniform, our results rigorously justify the convergence of the discrete GCG scheme and provide theoretical guidance for practical parameter choices.

\item \emph{Extension to general Hamiltonians.} 
In this paper, we have restricted our attention to a  quadratic Hamiltonian with the advection effect \eqref{eq:quadH}. 
This restriction is imposed in order to reduce the system to a reaction–diffusion system via the Cole--Hopf transformation.
This feature is also exploited in an essential way in the previous work \cite{NS26}.
However, although the GCG method and the CH–GCG method are equivalent at the PDE level through the Cole–Hopf transformation, the system obtained by directly discretizing the GCG method is, in general, not equivalent to the discrete GCG scheme, and a discrepancy arises between the two.
Consequently, a more natural approach would be to solve the HJB equation without relying on linearization techniques such as the Cole–Hopf transformation.
Such an approach would also make it possible to handle more general Hamiltonians.
This constitutes one of the directions for future research.

\item \emph{Variational structure of discrete GCG scheme.} 
Under assumption \textup{(f–P)}, both the MFG system and the GCG method possess a variational structure.
Although this property is not used directly in the present paper, the variational structure plays a role in the proofs of Proposition~\ref{LP23_thm7} and Proposition~\ref{LP23_thm8-2}.
On the other hand, we have not yet clarified whether the discrete GCG scheme admits a corresponding variational structure, and if so, what its precise form would be.
In fact, analyzing the convergence of the discrete GCG scheme based on such a variational structure may lead to more natural error estimates, or even to the development of improved discretization methods.
Understanding the variational structure of the discrete GCG scheme (as well as the discrete MFG system) therefore remains an interesting open question.

\item \emph{Boundary conditions and other discretization methods.} In this paper, we have consistently assumed periodic boundary conditions when analyzing the MFG system and the GCG method.
However, from the viewpoint of practical applications, it is often more natural to consider the system on a bounded domain and to impose appropriate boundary conditions.
Moreover, depending on the geometry of the domain and the choice of boundary conditions, it may be preferable to adopt numerical methods such as finite element or finite volume schemes rather than finite difference methods.
For instance, when applying finite element methods, the validity of a discrete maximum principle is not guaranteed a priori and typically requires additional technical considerations. 
These topics are left for future investigation.

\item \emph{Adaptive step-size selection methods.} 
In this paper, we focus on the predefined step-size selection \eqref{eq:predefine}.  
More sophisticated step-size strategies are known to yield faster convergence; see \cite{LP23,NS26}.  
While convergence of the discrete GCG scheme can be proved for any step-size sequence with \(\varepsilon_k \to 0\), deriving explicit convergence rates requires additional estimates on \(\varepsilon_k\).  
Such strategies, however, typically increase the computational complexity due to the need to evaluate functionals during implementation.
\end{enumerate}

\section*{Acknowledgements}
We would like to thank Dr. Daisuke Inoue (Toyota Central R{\&}D Labs., Inc.) and Professor Takahito Kashiwabara (The University of Tokyo)
for their valuable advice and insightful discussions during the course of this research.
This work was partially supported by JSPS KAKENHI Grant Number 21H04431 (Grant-in-Aid for Scientific Research (A)). Nakamura was supported by the SPRING GX program of The University of Tokyo.

\printbibliography

\end{document}